\documentclass[a4paper]{amsart}
\usepackage{amsthm}
\usepackage{amssymb}
\usepackage{dsfont}
\usepackage{mathrsfs}
\usepackage{tikz-cd}
\usepackage{enumerate}
\usepackage{eucal}
\usepackage{textgreek}
\usepackage{hyperref}
\hypersetup{%
  bookmarksnumbered=true,%
  colorlinks=true,%
  linkcolor=blue,%
  citecolor=blue,%
  filecolor=blue,%
  menucolor=blue,%
  urlcolor=blue,%
  bookmarksopen=true,%
  bookmarksdepth=2,%
  pageanchor=true}


\usepackage{mathtools}
\usepackage{todonotes}


\numberwithin{equation}{section}

\theoremstyle{plain}
\newtheorem{theorem}[equation]{Theorem}
\newtheorem{proposition}[equation]{Proposition}
\newtheorem{lemma}[equation]{Lemma} 
\newtheorem{corollary}[equation]{Corollary}

\theoremstyle{definition}
\newtheorem{definition}[equation]{Definition}
\newtheorem{example}[equation]{Example}

\theoremstyle{remark}
\newtheorem{remark}[equation]{Remark} 
\newtheorem{question}[equation]{Question}
\newtheorem*{ack}{Acknowledgements}


\hyphenation{Grothen-dieck} 
\hyphenation{com-mu-ta-tive}


\newcommand{\art}{\operatorname{art}}
\newcommand{\add}{\operatorname{add}}
\newcommand{\bcd}{D_\fp}
\newcommand{\bSpec}{\operatorname{Spc}}

\newcommand{\colim}{\operatorname*{colim}}

\newcommand{\cat}{\mathcal}
\newcommand{\comp}[1]{{#1}^{\mathrm{c}}}
\newcommand{\cone}{\operatorname{cone}}

\newcommand{\dbcat}[1]{{\mathbf D}^{\mathrm b}(\operatorname{mod}#1)}
\newcommand{\dcat}[1]{\mathbf{D}(#1)}
\newcommand{\End}{\operatorname{End}}
\newcommand{\Ext}{\operatorname{Ext}}

\newcommand{\gam}{\varGamma}

\newcommand{\Hom}{\operatorname{Hom}}
\newcommand{\fHom}{\operatorname{\mathcal{H}\!\!\;\mathit{om}}}

\newcommand{\ik}{\mathbf{i}k}
\newcommand{\bfi}{\mathbf{i}}

\newcommand{\Inj}{\operatorname{Inj}}
\newcommand{\kos}[2]{{#1}/\!\!/{#2}}

\newcommand{\Ker}{\operatorname{Ker}}
\newcommand{\KInj}[1]{\mathbf K(\Inj #1)}
\newcommand{\lam}{\varLambda}
\newcommand{\length}{\operatorname{length}}
\newcommand{\loc}{L}
\newcommand{\lotimes}{\otimes^{\mathbf L}}
\newcommand{\lra}{\longrightarrow}

\newcommand{\Mod}{\operatorname{Mod}}
\newcommand{\noeth}{\operatorname{noeth}}
\newcommand{\one}{\mathds 1}
\newcommand{\op}{\mathrm{op}}
\newcommand{\Proj}{\operatorname{Proj}}

\newcommand{\uHom}{\underline{\Hom}}
\newcommand{\Si}{\Sigma} 

\newcommand{\Soc}{\operatorname{Soc} }
\newcommand{\Spec}{\operatorname{Spec}}
\newcommand{\StMod}{\operatorname{StMod}}
\newcommand{\stmod}{\operatorname{stmod}}
\newcommand{\supp}{\operatorname{supp}}
\newcommand{\swd}{D^{\scriptscriptstyle{\mathrm {SW}}}}

\newcommand{\thick}{\operatorname{thick}}


\newcommand*\cocolon{%
        \nobreak
        \mskip6mu plus1mu
        \mathpunct{}%
        \nonscript
        \mkern-\thinmuskip
        {:}%
        \mskip2mu
        \relax
}

\newcommand{\iso}{\xrightarrow{\raisebox{-.6ex}[0ex][0ex]{$\scriptstyle{\sim}$}}}
\newcommand{\longiso}{\xrightarrow{\ \raisebox{-.6ex}[0ex][0ex]{$\scriptstyle{\sim}$}\ }}
\newcommand{\mhat}{{}^{\wedge}_\fm}

\newcommand{\phat}{{}^{\wedge}_\fp}
\newcommand{\xra}{\xrightarrow}


\newcommand{\bfD}{\mathbf D} 

\newcommand{\bbP}{\mathbb P}

\newcommand{\bsr}{\boldsymbol{r}}

\newcommand{\bbZ}{\mathbb Z}

\newcommand{\fa}{\mathfrak{a}} 
\newcommand{\fm}{\mathfrak{m}} 
\newcommand{\fp}{\mathfrak{p}}
\newcommand{\fq}{\mathfrak{q}} 
\newcommand{\eps}{\varepsilon}

\definecolor{myblue}{RGB}{100,100,255}

\title[Locally dualisable modular representations]{Locally dualisable modular representations \\ and local regularity}

\author[Benson, Iyengar, Krause, and Pevtsova]{Dave Benson, Srikanth
  B. Iyengar, Henning Krause \\ and Julia Pevtsova}

\address{Dave Benson \\ 
Institute of Mathematics\\ 
University of Aberdeen\\ 
King's College\\ 
Aberdeen AB24 3UE\\ 
Scotland U.K.}

\address{Srikanth B. Iyengar\\ 
Department of Mathematics\\
University of Utah\\ 
Salt Lake City, UT 84112\\ 
U.S.A.}

\address{Henning Krause\\ 
Fakult\"at f\"ur Mathematik\\ 
Universit\"at Bielefeld\\ 
33501 Bielefeld\\ 
Germany.}

\address{Julia Pevtsova\\ 
Department of Mathematics\\ 
University of Washington\\ 
Seattle, WA 98195\\ 
U.S.A.}

\begin{document}

\begin{abstract} 
This work concerns the stable module category of a finite group over
a field of characteristic dividing the group order.  The minimal
localising tensor ideals correspond to the non-maximal
homogeneous prime ideals in the cohomology ring of the group. Given such a
prime ideal, a number of characterisations of the dualisable objects in the corresponding
tensor ideal are given.  One characterisation of interest is that they are exactly the
modules whose restriction along a corresponding $\pi$-point
are finite dimensional plus projective.  A key insight is the identification of a special property of the stable module category that controls the cohomological behaviour of local dualisable objects. This property, introduced in this work for general triangulated categories and called  local regularity,  is  related to strong generation. A major part of the paper is devoted to developing this notion and investigating its ramifications for various special classes of objects in tensor triangulated categories.
\end{abstract}

\keywords{dualisable object, finite length object, local regularity, $\pi$-point, stable module category, tensor triangulated category}

\subjclass[2020]{20C20 (primary); 18G80, 20J06 (secondary)}

\date{\today}

\maketitle

\setcounter{tocdepth}{1}
\tableofcontents

\section{Introduction}
Let $G$ be a finite group and $k$ a field of characteristic $p$.
In modular representation theory, traditionally one
restricted attention to finitely generated $kG$-modules.
These have a number of desirable properties that make them
easier to work with, such as the Krull--Remak--Schmidt Theorem:
every finitely generated module is a direct sum of indecomposables;
the indecomposables have local endomorphism rings, and the
isomorphism types and multiplicities of the direct summands are
uniquely determined. This implies cancellation: if $M\oplus X
\cong N \oplus X$ then $M\cong N$. Krull--Remak--Schmidt drives the theory of
vertices and sources, as well as various other well known developments in
the subject.

In the nineties and subsequent decades, it was found that one has to contend with some
 infinitely generated modules, even if one is only interested in properties of the finitely generated ones. For example, in~\cite{Benson/Carlson/Rickard:1996a}, certain
infinitely generated idempotent modules are used in order to
classify the thick tensor ideals of the stable category of finitely
generated modules in terms of the prime ideal spectrum of the
cohomology ring $H^*(G,k)$.

An analogy from homotopy theory is as follows. Suppose that all you are really interested in is finite CW complexes, or even just closed manifolds. You might start to develop cohomology theories such as ordinary cohomology, $K$-theory, or cobordism theory. You then find that these are representable, but only by infinite CW complexes, so you are forced to study these. Cohomology operations then correspond to maps between these representing objects, and this is what makes calculation possible.

In modular representation theory, the idempotent modules above are representing objects in a similar way, and eventually these led to a classification~\cite{Benson/Iyengar/Krause:2011b}
of the localising tensor ideals of the stable category $\StMod kG$ of all $kG$-modules. There are local cohomology functors $\gam_\fp$ corresponding to the homogeneous prime ideals
of $H^*(G,k)$, that pick out the minimal localising tensor ideals $\gam_\fp\StMod kG$, and all localising tensor ideals are determined by the minimal ones that they contain. This sets up a bijection with the subsets of the homogeneous primes in $H^*(G,k)$ that are different from the maximal ideal of positive degree elements.

The problem with infinitely generated modules is that some rather
unsettling things happen. For example, as soon as the Sylow
$p$-subgroups of $G$ are non-cyclic, there are $kG$-modules $M$
with $M\cong M\oplus M\oplus M$ but $M\not\cong M\oplus M$.
So cancellation is violated, and furthermore,
$M$ and $M\oplus M$ are examples of non-isomorphic modules,
each of which is isomorphic to a direct summand of the other.
This and further pathologies are described in
Brenner and Ringel~\cite{Brenner/Ringel:1976a}.
The idempotent modules appearing above do not participate in
such pathologies, and so it is natural to ask for more comfortable
contexts containing these, and sitting between the finitely generated
modules and all modules.

Our goal in this paper is to describe a thick subcategory of $\gam_\fp\StMod kG$ that is large enough to contain the modules used in various constructions, but small enough that pathological behaviour is avoided. The simplest description is that this consists of objects $M$ \emph{of finite length} in the tensor triangulated category $\cat T=\gam_\fp \StMod kG$. This condition means that for each compact object $C$ in $\cat T$ the graded morphisms $\Hom^*_{\cat T}(C,M)$ form a module of finite length over the graded endomorphism ring of the tensor unit. In particular, $M$ is endofinite in the sense of \cite{Krause/Reichenbach:2001a}. This subcategory has many descriptions, and here we give some of these. 

\begin{theorem}
\label{th:finitelength}
Fix $\fp$ in $\Proj H^*(G,k)$ and a $\pi$-point $\alpha\colon K[t]/(t^p)\to KG$ representing $\fp$. For any object $M$ in the tensor triangulated category $\gam_\fp\StMod kG$, the  conditions below are equivalent.
\begin{enumerate}[\quad\rm(1)]
    \item\label{it:finlength} 
    $M$ has finite length in $\gam_\fp\StMod kG$;
    \item\label{it:dual} 
    $M$ is dualisable in $\gam_\fp\StMod kG$;
      \item\label{it:thick} 
    $M$ is in the thick subcategory generated by all $\gam_\fp C$ for $C\in\stmod kG$;
     \item\label{it:pi} 
    $\alpha^*(K\otimes_k M)$ is stably finite dimensional.
\end{enumerate}
\end{theorem}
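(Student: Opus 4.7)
The plan is to establish the cycle $(3) \Rightarrow (2) \Rightarrow (1) \Rightarrow (3)$, and then separately the equivalence $(2) \Leftrightarrow (4)$ via the $\pi$-point.

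For $(3) \Rightarrow (2)$, I would use that the dualisable objects in any tensor triangulated category form a thick subcategory containing the tensor unit. Since the unit of $\gam_\fp \StMod kG$ is $\gam_\fp k$, it is enough to show that $\gam_\fp C$ is dualisable for each $C \in \stmod kG$. Now $C$ is itself dualisable in $\StMod kG$ with $k$-linear dual $\Hom_k(C,k)$, and $\gam_\fp = \gam_\fp k \otimes -$ is a symmetric monoidal (smashing) localisation. So the image $\gam_\fp C$ inherits dualisability in $\gam_\fp \StMod kG$ with dual $\gam_\fp \Hom_k(C,k)$.

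For $(2) \Rightarrow (1)$, given a dualisable $M$ and any compact $C$ in $\cat T = \gam_\fp \StMod kG$, the duality adjunction yields an isomorphism $\Hom^*_{\cat T}(C,M) \cong \Hom^*_{\cat T}(\one, M \otimes C^\vee)$ of graded modules over $R = \End^*_{\cat T}(\one)$. The task then reduces to showing that this cohomology module has finite length over $R$ for the dualisable object $M \otimes C^\vee$. This is exactly where the local regularity property of $\gam_\fp \StMod kG$, developed earlier in the paper, should be invoked: it is designed to control the cohomological behaviour of local dualisable objects and produce precisely this finiteness. For $(1) \Rightarrow (3)$, I would filter a finite length object layer by layer using the length filtration on $\Hom^*_{\cat T}(\one, M)$, and identify each simple layer with a shift of an object in the thick closure of the $\gam_\fp C$, reducing to finitely many extensions in that thick subcategory.

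For $(2) \Leftrightarrow (4)$, the composite $\alpha^* \circ (K \otimes_k -)$ is a symmetric monoidal exact functor from $\gam_\fp \StMod kG$ into (a localisation of) $\StMod K[t]/(t^p)$. Symmetric monoidal functors preserve dualisable objects, and on the target side every module is a finite-dimensional piece plus a projective summand, so dualisability there is literally stable finite-dimensionality. Conversely, because $\alpha$ is a $\pi$-point representing $\fp$, the functor is conservative on the local category, and so detects dualisability as well; this gives the reverse implication $(4) \Rightarrow (2)$.

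The main obstacle is $(2) \Rightarrow (1)$: pushing abstract dualisability to finiteness of all compact-to-$M$ Hom modules over $R$ is where the fine structure of the category enters, through local regularity, and this is the cohomological core of the argument. A secondary subtlety is the handling of the scalar extension to the (possibly transcendental) field $K$ in $(2) \Leftrightarrow (4)$, which requires knowing that dualisability is preserved and detected by this base change combined with the $\pi$-point restriction.
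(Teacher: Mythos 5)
Your overall skeleton (a cycle among (1), (2), (3), plus a separate bridge to (4)) agrees with the paper's organisation, and your argument for $(3)\Rightarrow(2)$ is correct. But the proposal misplaces the role of local regularity and consequently leaves the hardest implication with an argument that does not work.

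First, $(2)\Rightarrow(1)$ does \emph{not} require local regularity. Your own reduction $\Hom^*_{\cat T}(C,M)\cong\Hom^*_{\cat T}(\one,M\otimes\swd C)$ is the right move, but the point is that $M\otimes\swd C$ is \emph{compact} in $\gam_\fp\cat T$ (dualisable tensor compact is compact, Proposition~\ref{pr:dualisable}), and compact objects of $\gam_\fp\cat T$ are $\fp$-torsion with noetherian cohomology over $R_\fp$, hence of finite length. This is exactly Proposition~\ref{pr:dualising-hom} and uses only that $\cat T$ is noetherian. Local regularity never enters here.

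Second, the implication that genuinely needs local regularity is $(1)\Rightarrow(3)$, and your proposed argument for it is not correct. There is no ``length filtration'' of the object $M$ induced by a length filtration of $\Hom^*_{\cat T}(\one,M)$: filtrations of the cohomology module do not lift to filtrations of $M$ by objects of the thick subcategory, and there is no reason the ``simple layers'' land in $\thick\{\gam_\fp C\}$. This lifting problem is precisely what local regularity is designed to solve. The paper's argument (Proposition~\ref{pr:strong-regular}) shows that when $\thick(T_\fp G)$, or equivalently $\thick(\gam_\fp G)$, is \emph{strongly} generated, a Rouquier/Letz-style Brown representability for cohomological functors into noetherian modules lets one represent $\Hom^*_{\cat T}(X,-)|_{\thick(\gam_\fp G)}$ and conclude $X\in\thick(\gam_\fp G)$. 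That is a genuinely different, and necessary, mechanism.

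Third, your argument for $(2)\Leftrightarrow(4)$ has a structural problem: a $\pi$-point $\alpha\colon K[t]/(t^p)\to KG$ is only required to be a flat algebra map (factoring through a unipotent abelian subgroup scheme), not a Hopf algebra map, so restriction $\alpha^*$ is in general \emph{not} symmetric monoidal. Hence one cannot appeal to monoidal functors preserving or detecting dualisability. The paper avoids this by proving $(1)\Leftrightarrow(4)$ at the level of cohomological finite length (Theorem~\ref{th:pi-main}), passing through Koszul objects, base change along $H^*(G,k)\to H^*(G,K)$, and a direct computation with $\Hom^*(\alpha_*K,M_K)$ via adjunction; the tensor structure on the $\pi$-point side never gets used. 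Relatedly, the assertion that every module over $K[t]/(t^p)$ is finite dimensional plus projective is false (an infinite coproduct of copies of the trivial module is a counterexample); the correct statement is only that objects of $\stmod K[t]/(t^p)$ are of that form, and recognising when a module lands there is exactly the content of condition (4).
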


There is a one-to-one correspondence between non-maximal homogeneous prime ideals in $H^*(G,k)$ and equivalence classes of $\pi$-points $\alpha$; see~\cite{Friedlander/Pevtsova:2007a}. Condition~\eqref{it:pi} above means that the restriction of the $KG$-module $K\otimes_kM$ to $K[t]/(t^p)$ is a direct sum of a finitely generated module and a projective module. 

The compact objects in $\gam_\fp\StMod kG$ have finite length, but the converse only holds when the prime $\fp$ is minimal.
In particular, the tensor identity $\gam_\fp k$ is dualisable, but compact only when $\fp$ is minimal. So given the result above  we may think of the category of finite length objects as the tensor triangulated completion of the compact objects. 

The equivalence \eqref{it:finlength}$\Leftrightarrow$\eqref{it:pi} is verified in Section~\ref{se:pipoints}. It applies equally to general finite group schemes and not only to finite groups. Condition \eqref{it:finlength} is equivalent to a noetherian property of the $\fp$-completion of $M$. This translation requires a version of Matlis duality for tensor triangulated categories, developed in Section~\ref{se:bc-duality}. Carlson~\cite{Carlson:2024a} proves that this is equivalent to condition \eqref{it:pi} when $\fp$ is a closed point and $G$ is elementary abelian. 

The implications \eqref{it:thick}$\Rightarrow$\eqref{it:dual}$\Rightarrow$\eqref{it:finlength} hold in a broader context; see Sections~\ref{se:bc-duality} and \ref{se:regularity}. They have been established in stable homotopy theory by Hovey and Strickland~\cite{Hovey/Strickland:1999a}. As explained in their work, the implication \eqref{it:finlength}$\Rightarrow$\eqref{it:thick} does not hold there; see also \cite{Benson/Iyengar/Krause/Pevtsova:2024b}.  The key property of $\StMod kG$ that is exploited in our proof of this implication is expressed in the following result.

\begin{theorem}
\label{th:local-regularity}
    For each $\fp$ in $\Proj H^*(G,k)$, the subcategory 
    \[
    \thick\{\gam_\fp C\mid C\in \stmod kG\}
    \]
    of $\StMod kG$ has a strong generator.
\end{theorem}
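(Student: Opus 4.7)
The plan is to produce a strong generator of $\mathcal{F} := \thick\{\gam_\fp C : C \in \stmod kG\}$ by starting from a strong generator of $\stmod kG$ itself and pushing it through the local cohomology functor. The first ingredient is classical: by Rouquier's bound of triangulated dimension in terms of representation dimension, combined with Iyama's theorem that every finite-dimensional algebra has finite representation dimension, the stable module category $\stmod kG$ has finite Rouquier dimension. Fix $M_0 \in \stmod kG$ and an integer $d \ge 1$ so that $\stmod kG = \langle M_0 \rangle_d$ in the Rouquier level filtration.

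Next, since $\gam_\fp \colon \StMod kG \to \StMod kG$ is triangulated and commutes with coproducts, it preserves the level filtration: $X \in \langle Y \rangle_n$ implies $\gam_\fp X \in \langle \gam_\fp Y \rangle_n$. Applied with $Y = M_0$ and $n = d$, every object $\gam_\fp C$ with $C \in \stmod kG$ lies in $\langle \gam_\fp M_0 \rangle_d$, and in particular $\mathcal{F} = \thick(\gam_\fp M_0)$, the reverse inclusion being immediate from $\gam_\fp M_0 \in \mathcal{F}$.

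The delicate point, and the main obstacle, is to upgrade this to the bounded-level statement $\mathcal{F} = \langle \gam_\fp M_0 \rangle_{d'}$ for a single $d'$, which is what strong generation demands; a priori, iterated cones among objects at level $d$ in the ambient category $\StMod kG$ could push the level arbitrarily high. What rescues the situation, and is the essence of the local regularity notion introduced earlier in the paper, is a lifting property of morphisms: a map $f \colon \gam_\fp C \to \gam_\fp D$ in $\StMod kG$ should be realised, up to direct summand corrections, as $\gam_\fp(\tilde f)$ for some $\tilde f \colon C \to D$ in $\stmod kG$. Its cone is then a summand of $\gam_\fp \cone(\tilde f)$ with $\cone(\tilde f) \in \stmod kG$, hence remains in $\langle \gam_\fp M_0 \rangle_d$.

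I expect the hardest step to be precisely this lifting/descent argument. One would carry it out by unwinding the realisation $\gam_\fp = \gam_{\mathcal{V}(\fp)} L_{\mathcal{Z}(\fp)}$ and using the Koszul objects $k/\!\!/\bsr$ attached to homogeneous systems of parameters $\bsr$ at $\fp$: these objects are compact, compute $\gam_\fp$ via suitable directed systems, and thereby pin down the image of $\gam_\fp$ applied to $\stmod kG$ tightly enough that iterated cone-taking stays within $\langle \gam_\fp M_0 \rangle_d$. Even if a direct lift of morphisms fails, a weaker bound of the form $\mathcal{F} \subseteq \langle \gam_\fp M_0 \rangle_{cd}$ for some absolute constant $c$ would suffice for strong generation.
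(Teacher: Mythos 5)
Your first two steps are fine: finite representation dimension of $kG$ (Iyama) plus Rouquier's bound does give $\stmod kG = \langle M_0\rangle_d$ for some compact generator $M_0$ and some $d$, and since $\gam_\fp$ is exact and preserves coproducts it does push each $\gam_\fp C$ into $\langle\gam_\fp M_0\rangle_d$. You also correctly identify the real obstacle: bounding the level of the thick closure, not just of the objects $\gam_\fp C$ themselves.

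The proposed resolution — the ``lifting lemma'' that every $f\colon\gam_\fp C\to\gam_\fp D$ is, up to summand corrections, of the form $\gam_\fp(\tilde f)$ for $\tilde f\colon C\to D$ in $\stmod kG$ — is not just hard, it is \emph{false}, and this is where the proof breaks. If the lemma held, then every cone of a map between objects of the form $\gam_\fp C$ would be a summand of some $\gam_\fp\cone(\tilde f)$, i.e.\ a summand of another object $\gam_\fp C'$ with $C'\in\stmod kG$. Iterating, one would conclude that $\thick\{\gam_\fp C\mid C\in\stmod kG\}=\add\{\gam_\fp C\mid C\in\stmod kG\}$, the idempotent completion of the essential image. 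But the paper explicitly notes (end of the introduction, citing \cite{Carlson/Iyengar:2024a}) that the thick closure of $\gam_\fp(\stmod kG)$ is in general strictly larger than its idempotent completion — that is precisely why, as the paper says, ``the proof is quite involved.'' So any argument that amounts to saying iterated cones never leave $\add\{\gam_\fp C\}$ cannot succeed. Nor is $\gam_\fp$ full: it is a localisation followed by a colocalisation, and there is no reason for maps between the images to descend, even up to summands. Your fallback remark — that a bound $\mathcal{F}\subseteq\langle\gam_\fp M_0\rangle_{cd}$ ``would suffice'' — is of course true, but no mechanism is offered for obtaining such a bound once lifting fails.

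The paper's actual route is entirely different and avoids this issue. It works with $\KInj{kG}$ rather than $\StMod kG$ directly, establishes local regularity for elementary abelian $E$ by replacing $kE$ with a Koszul dg-algebra $B$ whose homotopy category has a compact generator with \emph{polynomial} graded endomorphism ring (Theorem~\ref{th:reg-end}: finite global dimension of the endomorphism ring forces strong generation of the relevant $\thick(T_\fp G)$, via Adams resolutions), and then descends to arbitrary $G$ through restriction/induction along maximal elementary abelian subgroups using Proposition~\ref{pr:regularity-tt} and Carlson's theorem on generation of $\dbcat{kG}$ by induced modules. The strong generator is therefore produced cohomologically, by bounding injective resolutions over a regular ring, not by lifting morphisms or tracking levels through $\gam_\fp$.
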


The notion of a strong generator of a category is due to Bondal and van den Bergh~\cite{Bondal/vandenBergh:2003a}. For the derived category  of a commutative noetherian ring $A$ and a prime ideal $\fp\subseteq A$, the analogue of the above category is $\thick\{\gam_\fp P\mid P \text{ perfect}\}$, and this has a strong generator precisely when the local ring $A_\fp$ is regular. One can thus view the theorem above as stating that $\StMod kG$ is \emph{locally regular}.  Establishing this property of $\StMod kG$ in its various incarnations and exploring its ramifications forms a major part of this work. The regularity result itself appears in Theorem~\ref{th:regularity-main}; see also the discussion following that result.
The proof is quite involved because there are, in general, many more objects in the thick closure of $\gam_\fp(\stmod kG)$ than in its idempotent completion. Examples are given in \cite{Carlson/Iyengar:2024a}. 

We give many other equivalent descriptions of dualisable objects of $\gam_\fp\StMod kG$ in Section~\ref{se:finite-groups} based on the ground work in Sections~\ref{se:cohomology-and-localisation}-\ref{se:regularity}, leading to many pleasing properties of these modules. One of them is that the Krull--Remak--Schmidt Theorem holds, so there are no pathologies of the type mentioned earlier. Another is that dualisability is preserved by the appropriate versions of induction and restriction. In particular, this lays the foundation  for the theory of vertices and sources, and makes the Green correspondence work the way one would desire. There is a version for $\StMod kG$, but it is in terms of equivalences of quotient categories rather than inducing and restricting indecomposable objects. The Burry--Carlson--Puig Theorem also works, as does Green's Indecomposability Theorem; this fails in $\StMod kG$~\cite{Benson/Wheeler:1999a}.

Here is a brief outline of this paper: Sections~\ref{se:cohomology-and-localisation}-\ref{se:dualisability} provide a summary of  basic facts about compactly generated triangulated categories with additional structure given by a linear action of a ring or a tensor product. Finite length objects are discussed in Section~\ref{se:finite-length}. Brown--Comenetz duality, which may be viewed as the tensor triangular analogue of Matlis duality from commutative algebra, is the subject of Section~\ref{se:bc-duality}. Sections~\ref{se:strong-gen}-\ref{se:regularity} are devoted to a notion of local regularity and various descriptions of local dualisable objects. The final Sections~\ref{se:finite-groups}-\ref{se:pipoints} contain our results on the modular representations of finite groups. As is clear from this description, a major part of this manuscript develops general machinery that may be of interest beyond modular representations.

\section{Rappels: cohomology and localisation}

\label{se:cohomology-and-localisation}

In this section we recall certain basic notions and constructions in
triangulated categories. Our main interest is in tensor triangulated
categories, but many of the fundamental constructions do not require
this additional structure, and are more transparent when treated in
greater generality. Primary references for the material presented here
are \cite{Benson/Iyengar/Krause:2008a, Benson/Iyengar/Krause:2012b}.

Throughout we fix a graded commutative noetherian ring $R$.  We  consider only homogeneous elements and ideals in $R$ and $R$-modules are usually graded. Morphisms between graded modules are degree preserving, but we suppress shifts of graded modules when they arise. In this spirit, `localisation' means homogeneous localisation, and $\Spec R$ denotes the set of homogeneous prime ideals in $R$. In particular, for $\fp\in \Spec R$ we write $R_\fp$ for the homogeneous localisation of $R$ at $\fp$; it is a graded local ring with unique homogeneous maximal ideal $\fp R_\fp$. 

Fix a compactly generated triangulated category ${\cat T}$; its full subcategory consisting of compact objects is denoted $\comp{\cat T}$. Fix as well an $R$-linear action on ${\cat T}$, in the sense defined below.

\subsection*{Central actions} 
For objects $X$ and $Y$ in ${\cat T}$ set
\[ 
\Hom_{\cat T}^*(X,Y)\coloneqq \bigoplus_{i\in\bbZ}\Hom_{\cat T}(X,\Si^i Y) 
\quad \text{and}\quad
\End_{\cat T}^{*}(X)\coloneqq  \Hom_{\cat T}^{*}(X,X)\,.
\] 
Composition makes $\End_{\cat T}^{*}(X)$ a graded ring and $\Hom_{\cat T}^{*}(X,Y)$ a left-$\End_{\cat T}^{*}(Y)$ right-$\End_{\cat T}^{*}(X)$ module. The category ${\cat T}$ is said to be \emph{$R$-linear} if for each $X$ in ${\cat T}$ there is a homomorphism of graded rings $\phi_X\colon R\to \End_{\cat T}^{*}(X)$ such that the induced left and right actions of $R$ on $\Hom_{\cat T}^{*}(X,Y)$ are compatible in the following sense: For any $r\in R$ and $\alpha\in\Hom^*_{\cat T}(X,Y)$, one
has $\phi_Y(r)\alpha=(-1)^{|r||\alpha|}\alpha\phi_X(r)$.

The $R$-linear category $\cat T$ is said to be \emph{noetherian} if the $R$-module $\Hom^*_{\cat T}(X,Y)$ is noetherian for all $X,Y$ in $\comp{\cat T}$.

\subsection*{Localisation and torsion}
Let $V\subseteq\Spec R$ be a \emph{specialisation closed} subset, so  $\fp\subseteq\fq$ and $\fp\in V$ implies $\fq\in V$. An $R$-module $M$ is \emph{$V$-torsion} if $M_\fp=0$ for all $\fp$ in $(\Spec R)\setminus V$. Given an ideal $\fa$ in $R$ we sometimes speak of $\fa$-torsion modules instead of $V(\fa)$-torsion modules.

An object $X$ in ${\cat T}$ is \emph{$V$-torsion} if the $R$-module $\Hom_{\cat T}^*(C,X)$ is $V$-torsion for all $C\in\comp{\cat T}$. An object $Y$ in ${\cat T}$ is \emph{$V$-local} if $\Hom_{\cat T}(X,Y)=0$ for all $V$-torsion objects $X\in{\cat T}$. Writing $\cat S$ for the full subcategory of $V$-local objects in ${\cat T}$, one has a recollement of triangulated categories
\begin{equation}
\label{eq:recoll-spcl}
\begin{tikzcd}[column sep = huge]
    {\cat S} \arrow[tail]{r}[description]{i} &{\cat T}\arrow[twoheadrightarrow,swap,yshift=1.5ex]{l}{i_\lambda}
    \arrow[twoheadrightarrow,yshift=-1.5ex]{l}{i_\rho}
    \arrow[twoheadrightarrow]{r}[description]{p} &{\cat T}/{\cat S}
    \arrow[tail,swap,yshift=1.5ex]{l}{p_\lambda}\arrow[tail,yshift=-1.5ex]{l}{p_\rho}
\end{tikzcd}
\end{equation}
by \cite[Proposition~2.3]{Benson/Iyengar/Krause:2012b}. The exact functors
\[ 
L_V\coloneqq ii_\lambda \qquad\text{and}\qquad\gam_V\coloneqq p_\lambda p
\] 
are called \emph{localisation} and \emph{local cohomology} with respect to $V$. They provide, for each object $X\in{\cat T}$, a functorial exact triangle
\begin{equation}
\label{eq:lch}
\gam_VX\lra X\lra L_V X\lra\,.
\end{equation}
An object $X\in\cat T$ is $V$-torsion if and only if the counit $\gam_V X\to X$ is an isomorphism, and $X$ is $V$-local if and only if the unit $X\to L_V X$ is an isomorphism. In particular, the image of $\gam_V$ equals the subcategory of $V$-torsion objects in ${\cat T}$, while the image of $L_V$ equals the subcategory of $V$-local objects in ${\cat T}$. It is clear from the definition that $\gam_V \cat T$ and $L_V\cat T$ are triangulated subcategories of $\cat T$.

\begin{lemma}
\label{le:gam-loc-V}
    The triangulated categories $\gam_V\cat T$ and $L_V\cat T$ have coproducts and are compactly generated. We have $\comp{(\gam_V\cat T)}=\comp{\cat T}\cap\gam_V\cat T$, while $\comp{(L_V\cat T)}$ and $L_V(\comp{\cat T})$ coincide up to direct summands.
\end{lemma}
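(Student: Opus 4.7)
The plan is to verify the three claims of the lemma in sequence: closure under coproducts, production of compact generators, and description of the compact objects.

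For coproducts, suppose first that each $X_i$ is $V$-torsion. For any $C\in\comp{\cat T}$, we have $\Hom^*_{\cat T}(C,\bigoplus_i X_i)\cong\bigoplus_i\Hom^*_{\cat T}(C,X_i)$, a coproduct of $V$-torsion $R$-modules, which is again $V$-torsion. So $\gam_V\cat T$ is closed under coproducts in $\cat T$. For $L_V\cat T$, the functor $\gam_V=p_\lambda p$ is a composition of left adjoints and so preserves coproducts; a coproduct of $V$-local objects is therefore annihilated by $\gam_V$, hence $V$-local by the triangle \eqref{eq:lch}.

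Next, I would produce compact generators. For $L_V\cat T=\cat S$, the adjunction $i_\lambda\dashv i$ yields a natural isomorphism $\Hom_{\cat S}(L_V C,Y)\cong\Hom_{\cat T}(C,Y)$ for $C\in\comp{\cat T}$ and $Y\in\cat S$. Since coproducts in $\cat S$ agree with those in $\cat T$ by the preceding step, each $L_V C$ is compact in $\cat S$, and the set $\{L_V C:C\in\comp{\cat T}\}$ generates $\cat S$ because vanishing of these Hom groups forces $Y=0$. For $\gam_V\cat T$, I would exploit the $R$-linear structure to construct Koszul objects: for $C\in\comp{\cat T}$ and a finitely generated ideal $\fa=(r_1,\ldots,r_n)$ of $R$ with $V(\fa)\subseteq V$, iteratively taking cones of the maps $\phi_C(r_i)$ produces a compact object $\kos{C}{\fa}\in\cat T$ annihilated by a power of $\fa$, hence a $V(\fa)$-torsion object in $\gam_V\cat T$. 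A standard devissage along the defining triangles of Koszul objects shows that vanishing of $\Hom^*_{\cat T}(\kos{C}{\fa},X)$ for all such $C,\fa$ forces every homogeneous element of $\Hom^*_{\cat T}(C,X)$ to have support outside $V$, which for a $V$-torsion object $X$ gives $X=0$. Hence the Koszul objects form a compact generating set for $\gam_V\cat T$.

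To identify the compact objects, note that any $X\in\comp{\cat T}\cap\gam_V\cat T$ is compact in $\gam_V\cat T$, since coproducts there coincide with those in $\cat T$. Conversely, Neeman's theorem presents $\comp{(\gam_V\cat T)}$ as the idempotent-complete thick closure of the compact generators produced above; these generators lie in $\comp{\cat T}$, which is thick and idempotent complete in $\cat T$, and $\gam_V\cat T$ is closed under direct summands, so $\comp{(\gam_V\cat T)}\subseteq\comp{\cat T}\cap\gam_V\cat T$, completing the equality. Applying Neeman's theorem to $L_V\cat T$ with its compact generators $L_V(\comp{\cat T})$, together with the fact that $L_V(\comp{\cat T})$ is closed under cones in $L_V\cat T$ (by a standard roof-calculus argument in the Verdier quotient $\cat T/\gam_V\cat T\simeq\cat S$, refining representing roofs to have compact vertices), shows that $\comp{(L_V\cat T)}$ is the direct-summand closure of $L_V(\comp{\cat T})$, which is the asserted statement.

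The chief technical obstacle is the construction and compact-generation argument for the Koszul objects in $\gam_V\cat T$: this is the step where the $R$-linear structure is genuinely used, and the detection lemma relating the vanishing of Hom from Koszul objects to the absence of $V$-support requires care. Once this is established, the remainder of the proof reduces to formal consequences of the recollement and Neeman's identification of compacts in a compactly generated triangulated category.
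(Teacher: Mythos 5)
Your proposal is correct and essentially fills in the details behind what the paper handles by two citations: the paper's entire proof is to invoke Theorem~6.4 of Benson--Iyengar--Krause (2008), which asserts exactly that $\gam_V\cat T$ is compactly generated by the $V$-torsion objects of $\comp{\cat T}$ (proved there by precisely the Koszul-object devissage you sketch), together with Theorem~2.1 of Neeman (1992), which delivers all the claims about $L_V\cat T$ at once. Your version has the virtue of making visible where the $R$-linear structure, the compactness of $C$, and the recollement adjoints actually get used; what it loses is brevity, and the step you flag as the chief obstacle --- that vanishing of $\Hom^*_{\cat T}(\kos{C}{\fa},X)$ over all finitely generated $\fa$ with $V(\fa)\subseteq V$ forces $\gam_V X=0$ --- is considerably more delicate than ``a standard devissage along the defining triangles'' suggests: it is a genuine local-to-global argument and is the actual mathematical content of the Benson--Iyengar--Krause theorem, not a routine consequence of the Koszul triangles. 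One small circularity to note: asserting that $L_V(\comp{\cat T})$ is ``closed under cones, by a standard roof-calculus argument'' refining roofs to have compact vertices is using as an ingredient exactly what Neeman's theorem establishes; it is cleaner to simply cite the theorem and read off both the compact generation of $L_V\cat T$ by $L_V(\comp{\cat T})$ and the identification of $\comp{(L_V\cat T)}$ as the direct-summand closure of $L_V(\comp{\cat T})$. Neither point is a gap in the logic, only in the exposition.
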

\begin{proof}
     It follows from \cite[Theorem~6.4]{Benson/Iyengar/Krause:2008a} that $\gam_V{\cat T}$ is compactly generated by the compact objects from $\cat T$ that are $V$-torsion. The claims concerning $L_V\cat T$ are contained in  \cite[Theorem~2.1]{Neeman:1992b}.
\end{proof}

\subsection*{Completion}
The \emph{completion} with respect to $V$ is the functor
\[
\lam^V\coloneqq p_\rho p\,,
\] 
and an object $X\in{\cat T}$ is \emph{$V$-complete} if the unit $X\to \lam^VX$ is an isomorphism.

The following result describes the interplay between torsion and completion. This phenomenon was discovered by Greenlees and May~\cite{Greenlees/May:1992a}, and one finds many similar statements in the literature; see for example~\cite[Theorem~2.1]{Dwyer/Greenlees:2002a} or \cite[Theorem~3.3.5]{Hovey/Palmieri/Strickland:1997a}, and also \cite[Proposition~2.3]{Benson/Iyengar/Krause:2012b}. 

\begin{proposition} 
\label{pr:tor-complete}
For  each $X$ in ${\cat T}$ the canonical maps $\gam_V X\to X$ and  $X\to \lam^V X$ induce isomorphisms
\[
\lam^V\gam_V X\longiso \lam^V X \qquad\text{and}\qquad \gam_V X\longiso \gam_V\lam^V X\,.
\]
Moreover, the functors $(\gam_V,\lam^V)$ form an adjoint pair  and induce mutually quasi-inverse triangle equivalences
\[
\begin{tikzcd}[column sep = huge]
    \gam_V{\cat T}  \arrow[swap,yshift=-.75ex]{r}{\lam^V}  &\lam^V{\cat T}\arrow[swap,yshift=.75ex]{l}{\gam_V}\,.
    \end{tikzcd}
 \]
\end{proposition}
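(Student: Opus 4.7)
The plan is to apply the two functors $\gam_V$ and $\lam^V$ to the two canonical recollement triangles for $X$, and to exploit the basic identity $p\circ i = 0$ that is built into the definition of a recollement.

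One of the recollement triangles is~\eqref{eq:lch}; the other, dual to it, is
\[
ii_\rho X \;\to\; X \;\to\; \lam^V X \;\to,
\]
formed from the counit of $i\dashv i_\rho$ and the unit of $p\dashv p_\rho$. Applying $\lam^V$ to~\eqref{eq:lch} yields a triangle
\[
\lam^V\gam_V X \;\to\; \lam^V X \;\to\; \lam^V L_V X \;\to,
\]
whose third term is $\lam^V L_V X = p_\rho\,p\,i\,i_\lambda X = 0$, since $p\circ i = 0$. This gives $\lam^V\gam_V X\iso \lam^V X$. Symmetrically, applying $\gam_V$ to the dual triangle and using $\gam_V(ii_\rho X) = p_\lambda\,p\,i\,i_\rho X = 0$ yields $\gam_V X\iso \gam_V\lam^V X$.

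The adjunction $(\gam_V,\lam^V)$ is obtained by composing the adjunctions $p_\lambda\dashv p\dashv p_\rho$:
\[
\Hom_{\cat T}(\gam_V X, Y) \;\cong\; \Hom_{{\cat T}/{\cat S}}(pX, pY) \;\cong\; \Hom_{\cat T}(X, \lam^V Y).
\]
For the quasi-inverse equivalences, objects of $\gam_V\cat T$ are characterised by $\gam_V X\iso X$, so the isomorphism $\gam_V X\iso \gam_V\lam^V X$ specialises on $\gam_V\cat T$ to $X\iso \gam_V\lam^V X$; dually $Y\iso\lam^V\gam_V Y$ on $\lam^V\cat T$. Hence the restrictions $\gam_V\colon \lam^V\cat T\to\gam_V\cat T$ and $\lam^V\colon\gam_V\cat T\to\lam^V\cat T$ are mutually quasi-inverse, and they are triangulated because both $\gam_V$ and $\lam^V$ are exact.

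There is no real obstacle; the argument is formal, resting only on the defining vanishing $p\circ i = 0$ of a recollement and its two standard triangles. The one small bookkeeping point is to record the dual triangle $ii_\rho X\to X\to \lam^V X\to$ displayed above, which, although not explicit in the excerpt, is a standard part of the recollement package.
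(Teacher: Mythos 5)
Your proof is correct and follows essentially the same route as the paper's: apply $\lam^V$ (resp.\ $\gam_V$) to the relevant recollement triangle, observe that the third term vanishes because $p\circ i = 0$, then compose the adjunctions $p_\lambda\dashv p\dashv p_\rho$ to get the adjoint pair, and read off the equivalences on the images. The only difference is cosmetic: you spell out the dual triangle $ii_\rho X\to X\to\lam^V X\to$ and the final restriction argument, which the paper compresses into ``an analogous argument'' and ``the first part then yields the equivalences.''
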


\begin{proof}
Since $\lam^V L_V=p_\rho pii_\lambda =0$, applying $\lam^V$ to the exact triangle \eqref{eq:lch} yields an isomorphism $\lam^V\gam_V X\iso \lam^V X$. An analogous argument yields $\gam_V X\iso \gam_V\lam^V X$.

For $X,Y\in{\cat T}$ we have isomorphisms
\[
\Hom_{\cat T}(p_\lambda pX,Y)\cong \Hom_{\cat T/\cat S}(pX,pY) \cong  \Hom_{\cat T}(X,p_\rho pY)
\]
and hence $(\gam_V,\lam^V)$ form an adjoint pair. The first part of the lemma then yields the equivalences $\gam_V{\cat T}\iso \lam^V{\cat T}$
and $\lam^V{\cat T}\iso \gam_V{\cat T}$. 
\end{proof}

\subsection*{Localisation and completion at a point}
Fix a point $\fp\in\Spec R$ and consider the specialisation closed subsets
\[ 
V(\fp)\coloneqq \{\fq\in\Spec R\mid \fp\subseteq\fq\}\qquad\text{and}\qquad
  Z(\fp)\coloneqq \{\fq\in\Spec R\mid \fq\not\subseteq\fp\}\,.
\]
For $X\in\cat T$ the \emph{localisation} of $X$ at $\fp$ is
\[
X_\fp\coloneqq L_{Z(\fp)}X\,.
\] 
The object $X$ is said to be \emph{$\fp$-local} if the natural localisation map is an isomorphism: $X\iso
X_\fp$. Set
\[ 
{\cat T}_\fp\coloneqq \{X\in{\cat T}\mid X \text{ is $\fp$-local}\}
\] 
for the full subcategory of $\fp$-local objects in $\cat T$. For all $C\in\comp{\cat T}$ the natural morphism $X\to X_\fp$ induces an isomorphism 
\[
\Hom_{\cat T}^*(C,X)_\fp\iso \Hom_{\cat T}^*(C,X_\fp)\,.
\] 
Thus an object $X$ is $\fp$-local if and only if $\Hom_{\cat T}^*(C,X)$ is $\fp$-local for all $C\in\comp{\cat T}$. For $X,Y$ in ${\cat T}_\fp$ the $R$-module $\Hom_{\cat T}^*(X,Y)$ is $\fp$-local, so the action of $R$ on ${\cat T}_\fp$ factors through $R_\fp$. In particular, ${\cat T}_\fp$ is an $R_\fp$-linear category. The following result is used frequently; the proof is clear.

\begin{lemma}
\label{le:local-noetherian}
Let $\fp$ in $\Spec R$. For all $C\in\comp{\cat T}$ and $X\in \cat T$ there are isomorphisms of $R_\fp$-modules
\[
 \Hom^*_{\cat T}(C,X)_\fp \cong\Hom^*_{\cat T}(C,X_\fp)\cong\Hom^*_{\cat T}(C_\fp,X_\fp)\,.
\]
Thus, if the $R$-linear category $\cat T$ is noetherian, so is the $R_\fp$-linear category ${\cat T}_\fp$. \qed
\end{lemma}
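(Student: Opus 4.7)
\smallskip

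The plan is to derive the two isomorphisms in sequence and then deduce noetherianity by inspecting the compact objects of $\cat T_\fp$ via Lemma~\ref{le:gam-loc-V}.

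The first isomorphism $\Hom^*_{\cat T}(C,X)_\fp \cong \Hom^*_{\cat T}(C,X_\fp)$ is stated immediately before the lemma: it is exactly the assertion that localisation at $\fp$ on the second argument agrees with localisation of the graded Hom module when $C$ is compact. For the second isomorphism, I would invoke the adjunction from the recollement \eqref{eq:recoll-spcl} applied to $V = Z(\fp)$. Since $X_\fp = L_{Z(\fp)}X$ is $\fp$-local, the unit morphism $C\to C_\fp$ induces a bijection
\[
\Hom_{\cat T}(C_\fp,X_\fp) \iso \Hom_{\cat T}(C,X_\fp)
\]
because $X_\fp\in\cat T_\fp$ and $(-)_\fp$ is the reflection onto $\cat T_\fp$. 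Applying this in every internal degree and observing that each side carries a canonical $R$-action (which factors through $R_\fp$ on the target, hence everywhere) yields the claimed isomorphism of $R_\fp$-modules.

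For the noetherian statement, I would combine the displayed isomorphisms with Lemma~\ref{le:gam-loc-V}, which identifies the compact objects of $\cat T_\fp = L_{Z(\fp)}\cat T$ with direct summands of objects of the form $C_\fp$ for $C\in\comp{\cat T}$. Given $C,D\in\comp{\cat T}$, take $X=D_\fp$ in the display; since $D_\fp$ is already $\fp$-local, we get
\[
\Hom^*_{\cat T}(C_\fp,D_\fp) \cong \Hom^*_{\cat T}(C,D)_\fp.
\]
By hypothesis the right-hand side is the localisation of a noetherian graded $R$-module, hence is a noetherian graded $R_\fp$-module. Since the noetherian property is inherited by direct summands, $\Hom^*_{\cat T_\fp}(A,B)$ is noetherian over $R_\fp$ for all $A,B\in\comp{\cat T_\fp}$.

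There is really no obstacle here beyond bookkeeping: the content is entirely the first isomorphism (cited from the preceding paragraph) together with the defining adjunction $(i_\lambda,i)$ for the inclusion $\cat T_\fp\hookrightarrow \cat T$. The only mild care required is the passage from $L_\fp(\comp{\cat T})$ to $\comp{\cat T_\fp}$, which introduces direct summands; this is harmless since noetherianity is closed under summands.
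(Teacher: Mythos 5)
Your proof is correct and fills in exactly the details the paper leaves as "clear": the first isomorphism is the one stated just before the lemma, the second is the Bousfield localisation adjunction (that the unit $C\to C_\fp$ induces a bijection on maps into any $\fp$-local object), and the noetherian conclusion follows from Lemma~\ref{le:gam-loc-V} together with closure of noetherianity under direct summands. One minor point worth being careful about: you should note that both the source and the target object need to be replaced by summands — if $A$ is a summand of $C_\fp$ and $B$ a summand of $D_\fp$, then $\Hom^*_{\cat T_\fp}(A,B)$ is a summand of $\Hom^*_{\cat T_\fp}(C_\fp,D_\fp)\cong\Hom^*_{\cat T}(C,D)_\fp$ since $\Hom$ is additive in both variables; this is implicit in your final sentence but worth making explicit.
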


We call the functor $\gam_{\fp}\colon {\cat T}\to\cat T$ given by $X\mapsto \gam_{V(\fp)}X_\fp$ the \emph{local cohomology functor} with respect to $\fp$.  The functor $\lam^{\fp}\colon \cat T\to \cat T$ given by $X\mapsto\lam^{V(\fp)} ii_{\rho}X$, where the adjoint pair $(i,i_\rho)$ is from  \eqref{eq:recoll-spcl} for $V=Z(\fp)$, is right adjoint to $\gam_{\fp}$. It is easy to verify from the recollement defining these functors that the images of $\gam_\fp$ and
$\lam^\fp$ are the following subcategories:
\begin{align*}
\gam_\fp{\cat T} &\coloneqq \{X\in{\cat T}_\fp \mid X \text{ is $V(\fp)$-torsion} \} \\
\lam^\fp{\cat T} &\coloneqq \{X\in{\cat T}_\fp \mid X \text{ is $V(\fp)$-complete} \}. 
\end{align*}
We speak of objects in $\gam_{\fp}{\cat T}$ as being \emph{$\fp$-local
  $\fp$-torsion} and those in $\lam^{\fp}{\cat T}$ as being
\emph{$\fp$-local $\fp$-complete}. Proposition~\ref{pr:tor-complete} implies
that $\gam_\fp$ and $\lam^\fp$ restrict to mutually quasi-inverse triangle equivalences
\begin{equation}
\label{eq:DG}
  \begin{tikzcd}[column sep = huge]
\gam_\fp{\cat T}\arrow[rightarrow,yshift=-.75ex,swap,r,"\lam^\fp"]
	&\lam^\fp{\cat T}\,.\arrow[rightarrow,yshift=.75ex,l,swap,"\gam_\fp"]
\end{tikzcd} 
\end{equation}
A caveat is in order here: The equivalence above is not compatible with the functors from $\cat T$, in that, $\lam^\fp\gam_\fp\not\cong \lam^\fp$ and
$\gam_\fp\lam^\fp\not\cong \gam_\fp$ in general. However,  restricted to $\cat T_\fp$ they are isomorphisms, by Proposition~\ref{pr:tor-complete}. Moreover, in applications the more natural completion functor to consider is $\fp$-completion after localisation at $\fp$, akin to the $\fp$-torsion functor. All these considerations suggest restricting attention to $\fp$-local objects and then the following
perspective is useful: The inclusion of the full subcategory
\[
    \cat T_{<\fp}\coloneqq\{X\in\cat T_\fp\mid\gam_\fp X=0\}=
        \{X\in\cat T_\fp\mid\lam^\fp X=0\}
        \] of
$\fp$-local objects (co)supported away from $\fp$ yields a recollement 
\[
\begin{tikzcd}[column sep = huge]
    {\cat T}_{<\fp}\arrow[tail]{r} 
    	&{\cat T_\fp}\arrow[twoheadrightarrow,swap,yshift=1.5ex]{l}    \arrow[twoheadrightarrow,yshift=-1.5ex]{l}
		    \arrow[twoheadrightarrow]{r}[description]{q} 
        &{\cat T_\fp}/{\cat T}_{<\fp}    \arrow[tail,swap,yshift=1.5ex]{l}{q_\lambda}\arrow[tail,yshift=-1.5ex]{l}{q_\rho}
  \end{tikzcd}
\]
of
triangulated categories. It is a special case of \eqref{eq:recoll-spcl}. Then $\gam_\fp\cong q_\lambda q$ and $\lam^\fp\cong q_\rho q$.

\subsection*{Koszul construction} 
 Given an element $r$ in $R$ and an object $C$ in $\cat T$, we write $\kos C{r}$ for a cone of the morphism $C\xra{r} \Si^{|r|}C$. Given a sequence of elements $\bsr\coloneqq r_1,\dots,r_n$ in $R$, we write $\kos C{\bsr}$ for any object obtained by taking iterated cones; this is the \emph{Koszul construction} on $C$ with respect  to $\bsr$; see \cite[Definition~5.10]{Benson/Iyengar/Krause:2008a}. For an ideal $\fa$ of $R$ we write $\kos{C}{\fa}$ for any Koszul object $\kos C{\bsr}$ such that $\bsr$ generates $\fa$.

\begin{lemma}
\label{le:p-torsion}
Fix $\fp$ in $\Spec R$. There is an equality  $\comp{(\gam_\fp\cat T)}=\comp{(\lam^\fp\cat T)}$,
    and this thick subcategory is generated by objects of the form $\kos C{\fp}$ as $C$ ranges over the compact objects of $\cat T_\fp$. Hence for any compact object $C$ in $\gam_\fp\cat T$ or $\lam^\fp\cat T$, there exists an integer $n$ such that 
    \[
    \fp^n\cdot \Hom^*_{\cat T}(C,-) = 0= \fp^n\cdot \Hom^*_{\cat T}(-,C)\,.
    \]
\end{lemma}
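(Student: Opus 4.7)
The plan is to identify both $\comp{(\gam_\fp\cat T)}$ and $\comp{(\lam^\fp\cat T)}$ with the single thick subcategory $\cat K\coloneqq\thick\{\kos C{\fp}\mid C\in\comp{\cat T_\fp}\}$ of $\cat T$.

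First I would recall the Koszul calculus from \cite[\S5]{Benson/Iyengar/Krause:2008a}. Fix a homogeneous generating sequence $\bsr$ of $\fp$ and set $K=\kos C{\bsr}$ for an object $C$ of $\cat T$. Iterating the defining triangles $C\xra{r_i}\Sigma^{|r_i|}C\to \kos C{r_i}$ and exploiting centrality produces an integer $n$ (depending only on $\bsr$) such that $\fp^n$ annihilates $\Hom^*_{\cat T}(K,-)$ and $\Hom^*_{\cat T}(-,K)$. Specialising to $C\in\comp{\cat T_\fp}$: then $K$ is compact in $\cat T_\fp$ as a finite iterated cone of compact objects, and $\Hom^*_{\cat T}(D,K)$ is $\fp$-local and $\fp$-power torsion for every compact $D$, hence $V(\fp)$-torsion, so $K\in\gam_\fp\cat T$. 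Applying the general compact generation result \cite[Theorem~6.4]{Benson/Iyengar/Krause:2008a} in the $R_\fp$-linear category $\cat T_\fp$ shows that these Koszul objects compactly generate $\gam_\fp\cat T$, which already gives the equality $\cat K=\comp{(\gam_\fp\cat T)}$ together with the final annihilation statement of the lemma: the Koszul $\fp^n$-annihilation is stable under shifts, summands and cones (with $n$ possibly enlarged) by a routine five-lemma argument on Hom long exact sequences, so it passes to the thick closure $\cat K$.

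For the equality $\cat K=\comp{(\lam^\fp\cat T)}$, the heart of the matter is to prove that each generator $K=\kos C{\fp}$ is $V(\fp)$-complete, i.e.\ $\Hom_{\cat T}(Y,K)=0$ for every $Y\in\cat T_{<\fp}$. Since $\gam_\fp Y=0$, applying the exact functor $\gam_\fp$ to the defining triangles of the Koszul tower on $Y$ yields $\gam_\fp\kos Y{\fp}=0$; on the other hand $\kos Y{\fp}$ is itself $V(\fp)$-torsion by the Koszul annihilation recalled above, so $\kos Y{\fp}=\gam_\fp\kos Y{\fp}=0$ in $\cat T$. Unwinding the Koszul tower $Y=Y_0\to Y_1\to\cdots\to Y_m=0$, where $Y_i=\kos{Y_{i-1}}{r_i}$, backwards and applying $\Hom^*(-,K)$, the vanishing $Y_m=0$ shows that multiplication by $r_m$ is an isomorphism on $\Hom^*(Y_{m-1},K)$; combined with $\fp^n$-annihilation this forces $\Hom^*(Y_{m-1},K)=0$, and iterating downwards delivers $\Hom^*(Y,K)=0$. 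This places $\cat K\subseteq\lam^\fp\cat T$, and the quasi-inverse equivalence \eqref{eq:DG} now gives $\comp{(\lam^\fp\cat T)}=\lam^\fp\comp{(\gam_\fp\cat T)}=\lam^\fp\cat K=\cat K$, since each element of $\cat K$ already lies in $\lam^\fp\cat T$ and is therefore fixed by $\lam^\fp$.

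The principal obstacle is the verification that Koszul objects are $V(\fp)$-complete. The slogan \emph{nilpotent and invertible implies zero} is the morally correct engine of the argument, but the clean implementation requires the downward induction along the Koszul tower sketched above together with careful tracking of the $R_\fp$-action through the successive connecting homomorphisms.
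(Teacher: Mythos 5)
Your proof is correct and follows essentially the same route as the paper's: you identify $\comp{(\gam_\fp\cat T)}$ with the thick category $\cat K$ generated by the Koszul objects $\kos C\fp$ for $C\in\comp{\cat T_\fp}$ via the same citation \cite[Theorem~6.4]{Benson/Iyengar/Krause:2008a}, you derive the $\fp^n$-annihilation from the Koszul calculus (as the paper does via \cite[Lemma~5.11]{Benson/Iyengar/Krause:2008a}) and propagate it through the thick closure, and you conclude $\comp{(\lam^\fp\cat T)}=\cat K$ from the equivalence $\lam^\fp\colon\gam_\fp\cat T\iso\lam^\fp\cat T$ once $\cat K\subseteq\lam^\fp\cat T$ is known. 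The one place you diverge is the verification that Koszul objects are $\fp$-complete: the paper simply cites cosupport theory, \cite[Lemma~4.12]{Benson/Iyengar/Krause:2012b}, whereas you give a self-contained argument --- observing that $\kos Y\fp=0$ for $Y\in\cat T_{<\fp}$ (since it lies in $\gam_\fp\cat T$ yet has vanishing $\gam_\fp$), and then running the downward induction along the Koszul tower, playing $\fp$-power annihilation on $\Hom^*(-,K)$ against the invertibility forced at each stage by the vanishing of the cone above. This is a correct and pleasant unpacking of the cited cosupport lemma in the case at hand, and it makes the proof self-contained at the cost of a somewhat longer argument. Everything else in your proposal tracks the paper's proof.
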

 
 \begin{proof}
 The description of the compact objects in  $\gam_{\fp}\cat T$ follows from \cite[Theorem~6.4]{Benson/Iyengar/Krause:2008a}; see also \cite[Proposition~2.7]{Benson/Iyengar/Krause:2011a}. Cosupport theory shows that $\lam^\fp(\kos X{\fp})=\kos X{\fp}$ for any 
 object $X$ in $\cat T_\fp$; see \cite[\S4]{Benson/Iyengar/Krause:2012b} and in particular \cite[Lemma~4.12]{Benson/Iyengar/Krause:2012b}. This yields the 
 equality $\comp{(\gam_\fp\cat T)}=\comp{(\lam^\fp\cat T)}$, since $\lam^\fp$ induces an equivalence $\gam_\fp\cat T\iso\lam^\fp\cat T$.
 From \cite[Lemma~5.11]{Benson/Iyengar/Krause:2008a} it follows that $ \fp^n\cdot \End^*_{\cat T}(C) = 0$ for $n\gg 0$.
 This yields the second assertion for compacts in $\gam_\fp\cat T$ and $\lam^\fp\cat T$.
 \end{proof}

We also record the following observation, for later use. Given a graded $R$-module $M$, the object $\kos M{\fa}$ is the Koszul construction on $M$ with respect to the ideal $\fa$, where $M$ is viewed as an object in the derived category of graded $R$-modules. In particular, each $H^i(\kos M{\fa})$ is itself a graded $R$-module.

\begin{lemma}
\label{le:koszul-tt}
    For objects  $X,Y$ in $\cat T$ and an ideal $\fa\subseteq R$ one has
    \begin{align*}
     \length_R \Hom^*_{\cat T}(X,\kos Y\fa)< \infty &\iff 
        \length_R H^*(\kos{\Hom^*_{\cat T}(X,Y)}{\fa})< \infty\\
        &\iff \length_R \Hom^*_{\cat T}(\kos X\fa,Y)< \infty\,.   
    \end{align*}
\end{lemma}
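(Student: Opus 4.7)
The plan is to reduce to the case of a principal ideal $\fa=(r)$ by induction on the number of generators of $\fa$, and then to exploit the defining triangle of the Koszul object directly.

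For the principal case, apply $\Hom^*_{\cat T}(X,-)$ to the triangle $Y\xra{\,r\,}\Si^{|r|}Y\to\kos Y r\to\Si Y$. Because the $R$-action on $\cat T$ is central, the resulting long exact sequence of $R$-modules breaks into short exact sequences (up to shifts)
\[
0\to M/rM\to \Hom^*_{\cat T}(X,\kos Y r)\to \Ker(r,M)\to 0\,,
\]
where $M=\Hom^*_{\cat T}(X,Y)$ and $\Ker(r,M)$ denotes the kernel of multiplication by $r$ on $M$. On the Koszul side one has a direct-sum decomposition $H^*(\kos M r)\cong M/rM\oplus \Ker(r,M)$ (with appropriate shifts). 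Additivity of length then yields
\[
\length_R\Hom^*_{\cat T}(X,\kos Y r)=\length_R M/rM+\length_R \Ker(r,M)=\length_R H^*(\kos M r)\,,
\]
so the two quantities are simultaneously finite. The third quantity is treated analogously by applying $\Hom^*_{\cat T}(-,Y)$ to the triangle defining $\kos X r$.

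For the inductive step, write $\fa=\fb+(r)$ with $\fb=(r_1,\dots,r_{n-1})$ and $r=r_n$, so $\kos Y\fa=\kos{\kos Y\fb}{r}$ and, as complexes of $R$-modules, $\kos M\fa=\kos{\kos M\fb}{r}$. I would strengthen the inductive hypothesis to assert that $\Hom^*_{\cat T}(X,\kos Y\fb)$ admits a filtration by $R$-submodules whose associated graded is (up to shifts) $\bigoplus_i H^i(\kos M\fb)$; the principal case above verifies this at the base. Applying the principal case to the pair $(X,\kos Y\fb)$ and the element $r$, combined with the snake lemma applied to multiplication by $r$ across each filtration step, refines the two-step filtration on $\Hom^*_{\cat T}(X,\kos Y\fa)$ to match the filtration on $H^*(\kos M\fa)$ coming from the long exact sequence of $\kos M\fb\xra{\,r\,}\kos M\fb\to\kos M\fa$. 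Hence the two lengths agree. The argument for $\Hom^*_{\cat T}(\kos X\fa,Y)$ proceeds dually, by applying $\Hom^*_{\cat T}(-,Y)$ at each stage of the iterative construction of $\kos X\fa$.

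The main obstacle is that $\Hom^*_{\cat T}(X,\kos Y\fb)$ is not canonically isomorphic to $H^*(\kos M\fb)$ as an $R$-module: it is only a filtered module with the same associated graded, since the short exact sequences arising from the defining triangles need not split. Carrying this filtration structure, rather than a bare equivalence of finite lengths, through the induction is what makes the snake-lemma refinement work consistently at each step.
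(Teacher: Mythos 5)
Your treatment of the principal case $\fa=(r)$ matches the paper's: the short exact sequence $0\to M/rM\to \Hom^*_{\cat T}(X,\kos Y r)\to \Ker(r,M)\to 0$, together with the identification $H^*(\kos M r)\cong M/rM\oplus\Ker(r,M)$ in distinct degrees, gives the equivalence, and the third quantity is handled dually. The paper then asserts without elaboration that it suffices to treat the principal case; your proposal tries to supply the missing reduction, and that is where the gap is.

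The strengthened inductive claim — that $\Hom^*_{\cat T}(X,\kos Y\fb)$ has a filtration whose associated graded is $\bigoplus_i H^i(\kos M\fb)$, and that the snake lemma then ``refines the two-step filtration \dots to match'' — is not justified and is in general false. Write $0=F_0\subset\cdots\subset F_k=N$ for the filtration of $N=\Hom^*_{\cat T}(X,\kos Y\fb)$ with quotients $Q_j$. Applying the snake lemma for multiplication by $r$ to $0\to F_{j-1}\to F_j\to Q_j\to 0$ produces a connecting map $\delta_j\colon\Ker(r,Q_j)\to F_{j-1}/rF_{j-1}$; the induced filtration on $\Ker(r,N)$ has $j$-th quotient $\Ker\delta_j$ rather than $\Ker(r,Q_j)$, and the induced filtration on $N/rN$ has quotients that are correspondingly cut down by the images of the $\delta$'s. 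So the associated graded you produce on $\Hom^*_{\cat T}(X,\kos Y\fa)$ is only a subquotient of $\bigoplus_i H^i(\kos M\fa)$, not equal to it, and the claim ``hence the two lengths agree'' does not follow. The forward direction of the lemma survives (a subquotient of a finite-length module has finite length), but the converse — the direction the paper actually needs in Lemma~\ref{le:hom-symmetry} and Theorem~\ref{th:pi-main} — breaks: finiteness of $\length\Hom^*_{\cat T}(X,\kos Y\fa)$ controls $\Ker\delta_j$ and $\operatorname{Coker}\delta_j$ but gives no bound on the images of the $\delta_j$, which is exactly the missing piece in recovering finiteness of the full $H^i(\kos M\fa)$. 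A correct reduction to the principal case must therefore proceed by some mechanism other than literal matching of associated gradeds.
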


\begin{proof}
We verify the first isomorphism; a similar argument gives the second one. It suffices to consider the case when $\fa=(r)$. Set $M=\Hom^*_{\cat T}(X,Y)$ and let $L$ be the submodule of elements annihilated by $r$. From the construction of the Koszul object $\kos Yr$ one gets an exact sequence of $R$-modules
\[
0\longrightarrow M/rM \longrightarrow \Hom^*_{\cat T}(X,\kos Yr) 
     \longrightarrow  L \longrightarrow 0\,.
\]
Thus $\Hom^*_{\cat T}(X,\kos Yr)$ has finite length if and only if both $M/rM$ and $L$ have finite length. 
For the Koszul object $\kos Mr$ we have
\[
H^i(\kos Mr)=
\begin{cases} L&i=-1\,,\\ M/rM&i=0\,,\\ 0&i\neq -1,0\,.
\end{cases}
\]
The stated equivalences are now obvious.
\end{proof}

\section{Tensor structure and dualisable objects}
\label{se:dualisability}

In this section we recall some basic facts about compactly generated tensor triangulated categories. We pay particular attention to the notion of dualisability; a more detailed discussion can be found in \cite{Benson/Iyengar/Krause/Pevtsova:2024b}. 

Fix a compactly generated tensor triangulated category $({\cat T},\otimes,\one)$, with tensor product $\otimes$ and unit $\one$. Throughout we assume the tensor product is symmetric. For now $\one$ need not be compact. Brown representability yields \emph{functions objects} $\fHom(X,Y)$ satisfying an adjunction isomorphism
\[
\Hom_{\cat T}(X\otimes Y,Z)\cong \Hom_{\cat T}(X,\fHom(Y,Z)) \quad \text{for all $X,Y,Z$ in ${\cat T}$.}
\]

The \emph{Spanier--Whitehead dual} of an object $X$ is
\[
\swd X \coloneqq \fHom(X,\one)\,.
\]
The assignment $X\mapsto \swd X$ is a contravariant functor ${\cat T}\to{\cat T}$.  

\subsection*{Dualisable objects}
An object $X$ in ${\cat T}$ is said to be \emph{dualisable} if for all
$Y$ in ${\cat T}$ the natural map
\[
\swd X\otimes Y\lra \fHom(X,Y)\,,
\]
is an isomorphism. 

The adjoint of the composite $X\otimes \swd X \xra{\gamma}\swd X\otimes X \xra{\eps}\one$ of the braiding $\gamma$ with the counit $\eps$  gives the natural double duality map
\begin{equation*}
\label{eq:rho}
\rho \colon X \lra \swd\swd(X)\,.
\end{equation*} 

The result below is \cite[Proposition~2.2]{Benson/Iyengar/Krause/Pevtsova:2024b} and collects some useful observations concerning these notions.

\begin{proposition}\pushQED{\qed}
\label{pr:dualisable}
Let $X$ be an object in ${\cat T}$. The following statements hold.
\begin{enumerate}[\quad\rm(1)]
\item
The object $X$ is dualisable if and only if there is a map $\eta\colon \one \to X\otimes \swd X$ making the following diagram commute
\[
\begin{tikzcd}
\one \arrow{d} \arrow["\eta"]{r} & X\otimes \swd X \arrow["\gamma"]{d} \\
\fHom(X,X) \arrow[leftarrow]{r} & \swd X\otimes X
\end{tikzcd}
\]
where the left hand map is the adjoint to the isomorphism $\one \otimes X \iso X$.
\item
If $X$ is dualisable so is $\swd X$ and $\rho\colon X\to \swd\swd X$ is an isomorphism.
\item
If $X$ is dualisable and $C\in{\cat T}$ is compact, then $C\otimes X$ is compact. \qedhere
\end{enumerate}
\end{proposition}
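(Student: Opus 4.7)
My plan is to prove the three parts as a standard application of the closed symmetric monoidal structure. Parts (1) and (2) amount to the statement that dualisability in the sense defined coincides with being part of a dual pair $(X,\swd X)$ whose evaluation and coevaluation satisfy the zig-zag identities, while (3) is a short adjunction calculation.

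For (1), I would prove the two implications separately. If $X$ is dualisable, I apply the isomorphism $\swd X\otimes Y\iso\fHom(X,Y)$ with $Y=X$, take the preimage of $\id_X\in\Hom_{\cat T}(\one,\fHom(X,X))$ to obtain a map $\one\to \swd X\otimes X$, and compose with the inverse braiding to produce $\eta\colon\one\to X\otimes\swd X$; commutativity of the given triangle is then the defining equation of $\eta$. Conversely, given such $\eta$, I would construct an explicit inverse to $\swd X\otimes Y\to \fHom(X,Y)$ by
\[
\fHom(X,Y)\cong\one\otimes\fHom(X,Y)\xra{\eta\otimes 1}X\otimes\swd X\otimes\fHom(X,Y)\xra{\gamma\otimes 1}\swd X\otimes X\otimes\fHom(X,Y)\xra{1\otimes\eps}\swd X\otimes Y,
\]
where $\eps\colon X\otimes\fHom(X,Y)\to Y$ is the evaluation. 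Verifying that this is a two-sided inverse reduces to the commutativity of the triangle for $\eta$ together with the standard triangle identities, and is the most tedious step.

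For (2), once $\eta$ exists, swapping the roles of the two factors yields a coevaluation $\gamma\eta\colon\one\to \swd X\otimes X$ for the pair $(\swd X,X)$, and the corresponding triangle diagram commutes by symmetry. Applying the converse direction of (1) then shows $\swd X$ is dualisable with a preferred iso $\swd\swd X\cong X$. To identify this iso with $\rho$, I would use that both maps fit into the same universal diagram: they are determined by the evaluation $\swd X\otimes X\to\one$, which is common to both dual pair structures, and naturality pins the comparison down to $\rho$.

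For (3), the argument is short. Let $\{Y_i\}$ be a family in $\cat T$. Since $-\otimes Z$ is left adjoint to $\fHom(Z,-)$, tensoring with any object preserves coproducts, and dualisability of $X$ therefore gives $\fHom(X,\coprod Y_i)\cong\swd X\otimes\coprod Y_i\cong\coprod(\swd X\otimes Y_i)\cong\coprod\fHom(X,Y_i)$. Combining this with the tensor-hom adjunction and compactness of $C$ yields
\[
\Hom_{\cat T}\Bigl(C\otimes X,\coprod Y_i\Bigr)\cong\Hom_{\cat T}\Bigl(C,\coprod\fHom(X,Y_i)\Bigr)\cong\coprod\Hom_{\cat T}(C\otimes X,Y_i),
\]
so $C\otimes X$ is compact. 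I expect the only real obstacle to be the diagram-chase in (1); the rest is a routine application of the dual-pair formalism.
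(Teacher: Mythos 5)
The paper does not prove this proposition at all: it is simply quoted with a citation to \cite[Proposition~2.2]{Benson/Iyengar/Krause/Pevtsova:2024b}, so there is no in-paper argument to compare against. Your sketch is the standard dual-pair argument, and it is essentially correct, but two points deserve comment.

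For (1), the mathematical content is that in a \emph{symmetric} monoidal closed category a single triangle identity involving $\eta$ and the canonical evaluation $\eps\colon \swd X\otimes X\to\one$ already forces the natural map $\swd X\otimes Y\to\fHom(X,Y)$ to be invertible for every $Y$. The inverse you write down is the right one, but be aware that the diagram chase you defer is not a one-liner: it uses the commutativity of the given triangle \emph{and} the symmetry of the braiding in an essential way (the statement fails without symmetry), and one must be careful that the two composites agree for all $Y$, not just $Y=X$.

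For (2), there is a small but real wrinkle. To "apply the converse direction of (1) to $\swd X$" you would need a coevaluation with target $\swd X\otimes\swd\swd X$, but $\gamma\eta$ has target $\swd X\otimes X$. The cleaner route is: having established (via (1)) that $(X,\swd X)$ is a genuine dual pair, symmetry shows that $(\swd X, X)$ is a dual pair as well, which already gives an isomorphism of functors $X\otimes(-)\iso\fHom(\swd X,-)$. Evaluating at $\one$ yields an isomorphism $X\iso\swd\swd X$, and one checks (by unwinding the adjunctions) that it agrees with $\rho$; the dualisability of $\swd X$ then follows since $\swd\swd X\otimes Y\cong X\otimes Y\cong\fHom(\swd X,Y)$. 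Your phrasing "applying the converse direction of (1) then shows $\swd X$ is dualisable" skips this intermediate identification of $X$ with $\swd\swd X$, which is exactly the content of the $\rho$-isomorphism rather than an input to it. Part (3) is correct as written.
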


\subsection*{Central actions}

From this point on $({\cat T},\otimes,\one)$ is a rigidly compactly generated  tensor triangulated category, where \emph{rigidity} means that compact and dualisable objects in $\cat T$ coincide. Hence $\one$ is compact, $\comp{\cat T}\subseteq \cat T$ is a tensor ideal thick subcategory, and Spanier--Whitehead duality induces an auto-equivalence on it.

Let $R$ be a noetherian graded commutative ring that acts \emph{canonically} on $\cat T$, that is to say, via a map of graded rings $R\to \End_{\cat T}^{*}(\one)$.

We begin by recording some special features of the localisation and
completion functors, recalled in
Section~\ref{se:cohomology-and-localisation}, in this context.  For
any specialisation closed subset $V\subseteq \Spec R$ and object
$X \in{\cat T}$ there are natural isomorphisms
\begin{equation*}
\gam_V X\cong \gam_V \one \otimes X \qquad\text{and}\qquad \loc_V X \cong  \loc_V \one \otimes X \,.
\end{equation*}
By adjunction, these yield for any $Y\in \cat T$ natural isomorphisms
\begin{equation*}
\fHom(\gam_V X,Y)\cong\lam^V\fHom(X,Y) \cong\fHom(X,\lam^V Y)\,.
\end{equation*}
See \cite[Proposition~8.1]{Benson/Iyengar/Krause:2008a} and
\cite[Proposition~8.3]{Benson/Iyengar/Krause:2012b}.  
In particular, for any $\fp\in\Spec R$ and objects $X,Y\in\cat T_\fp$ we have
\begin{equation}\label{eq:loc-hom}
\fHom(\gam_\fp X,Y)\cong\lam^\fp\fHom(X,Y) \cong\fHom(X,\lam^\fp Y)\,.
\end{equation}

\subsection*{Tensor structures of subcategories}
We  describe the tensor structures of the subcategories $L_V\cat T$ and
$\gam_V\cat T$ that are given by a specialisation closed subset
$V\subseteq \Spec R$. In particular we study their dualisable objects.

The following lemma is about the shadow of Spanier--Whitehead duality in $\loc_V{\cat T}$,
the subcategory of ${\cat T}$ consisting of $V$-local objects.

\begin{lemma}
\label{le:LV-cat}
For each  specialisation closed subset $V\subseteq \Spec R$ one has  a commutative diagram of exact  functors:
\[
  \begin{tikzcd}
    \comp{\cat T} \arrow[d,swap,"{\fHom(-,\one)}"]\arrow[rr,"\loc_V"]&&
    \loc_V{\cat T} \arrow[d,"{\fHom(-,\loc_V\one)}"]\\
   \comp{\cat T}\arrow[rr,"\loc_V"]&&\loc_V{\cat T}
 \end{tikzcd}
\]
\end{lemma}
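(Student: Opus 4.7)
The diagram asserts a natural isomorphism $L_V\swd C \cong \fHom(L_V C, L_V\one)$ for every $C \in \comp{\cat T}$. Before anything else I would check that the right vertical functor is well-defined, i.e.\ that $\fHom(X, L_V\one) \in L_V\cat T$ for every $X \in L_V\cat T$: given a $V$-torsion object $T$, the adjunction gives $\Hom_{\cat T}(T, \fHom(X, L_V\one)) \cong \Hom_{\cat T}(T\otimes X, L_V\one)$; the object $T\otimes X$ is $V$-torsion because $\gam_V$ is a tensor ideal (equivalently, $\gam_V \cong \gam_V\one\otimes -$), so this Hom group vanishes. Both verticals are exact since $\fHom$ is exact in the first variable, so only the commutativity up to natural isomorphism remains.

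The core of the argument is a three-step chain of natural isomorphisms. First, the identification $L_V Y \cong L_V\one \otimes Y$ applied to $Y = \swd C$ gives
\[
L_V \swd C \;\cong\; L_V\one \otimes \swd C.
\]
Second, since $C$ is compact and hence dualisable, the defining map $\swd C \otimes Y \to \fHom(C,Y)$ is an isomorphism for every $Y$; take $Y = L_V\one$ to obtain
\[
L_V\one \otimes \swd C \;\cong\; \fHom(C, L_V\one).
\]
Third, I would show that precomposition with the localisation map $C \to L_V C$ induces an isomorphism
\[
\fHom(L_V C, L_V\one) \;\iso\; \fHom(C, L_V\one).
\]
For this, apply $\fHom(-, L_V\one)$ to the localisation triangle $\gam_V C \to C \to L_V C$ and use the identification $\fHom(\gam_V C, L_V\one) \cong \fHom(C, \lam^V L_V\one)$ from \eqref{eq:loc-hom}; since $L_V\one$ lies in the image of $i$ and $\lam^V = p_\rho p$ with $pi = 0$, one has $\lam^V L_V\one = 0$, giving the desired isomorphism.

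Splicing these three isomorphisms together yields $L_V\swd C \cong \fHom(L_V C, L_V\one)$, and each isomorphism is natural in $C$ (the first from the tensor-localisation identity, the second from the dualisability counit, the third from the universal property of localisation), so the composite is natural as well, establishing commutativity of the square.

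I do not anticipate a genuine obstacle here: the only subtle point is the third step, where one must confirm that $L_V\one$ is annihilated by $\lam^V$ in order to conclude that Hom out of the torsion part vanishes; this is immediate from the recollement \eqref{eq:recoll-spcl} but is the one place where care is required rather than routine manipulation.
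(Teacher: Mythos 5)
Your proof is correct. The approach is a close variant of the paper's: both proofs hinge on the same three ingredients, namely dualisability of the compact object $C$ (giving $\swd C\otimes L_V\one\cong\fHom(C,L_V\one)$), the tensor--localisation identity $L_V(-)\cong L_V\one\otimes(-)$, and the recollement vanishing $\lam^V L_V=0$. The difference is in how the chain is arranged. You compute $L_V\swd C$ forward and then, as the final and only nontrivial step, identify $\fHom(L_VC,L_V\one)$ with $\fHom(C,L_V\one)$ by applying $\fHom(-,L_V\one)$ to the localisation triangle $\gam_VC\to C\to L_VC$ and killing $\fHom(\gam_VC,L_V\one)$. The paper instead starts from $\fHom(L_V\one\otimes C,L_V\one)$, uses the tensor--Hom adjunction to swap the tensor factor into the second slot, and only at the end replaces $\fHom(L_V\one,-)$ by $\fHom(\one,-)$, with the same vanishing of $\lam^V L_V$ appearing implicitly to justify that last isomorphism. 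Both are valid and roughly equal in length; your version makes the role of the recollement vanishing explicit rather than leaving it implicit, which is a small gain in clarity. Your preliminary verification that $\fHom(X,L_V\one)$ is $V$-local is correct and matches what the paper shows in Proposition~\ref{pr:loc-V}.
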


 \begin{proof}
Fix a compact object $C$ in ${\cat T}$. The following series of isomorphisms verifies the commutativity of the diagram:
  \begin{align*}
 \fHom(\loc_V\one \otimes C,\loc_V\one) 
 	&\cong    \fHom(\loc_V\one,\fHom(C,\loc_V\one)) \\
	&\cong    \fHom(\loc_V\one,\swd{C}\otimes\loc_V\one) \\
	&\cong    \fHom(\loc_V\one, \loc_V\swd C) \\
    &\cong    \fHom(\one, \loc_V\swd C) \\
	&\cong  \loc_V\swd C\,.
\end{align*}
The second one holds because compact objects are dualisable, by rigidity of $\cat T$.
\end{proof}

 The result below builds on Lemma~\ref{le:gam-loc-V}.
 
 \begin{proposition}
 \label{pr:loc-V}
 Let $V\subseteq \Spec R$ be a specialisation closed subset. The category $\loc_V{\cat T}$ is compactly generated and inherits a tensor product from $\cat T$. The unit is $\loc_V\one$ and the function object is the same as for ${\cat T}$. The compact objects and dualisable objects in $\loc_V{\cat T}$  coincide and are the  direct summands of objects in $\loc_V(\comp{\cat T})$.  
 \end{proposition}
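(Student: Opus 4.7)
The plan is to exploit the projection formulas $\loc_V X\cong \loc_V\one\otimes X$ and $\gam_V X\cong \gam_V\one\otimes X$ to transfer the tensor structure from $\cat T$ to $\loc_V\cat T$, and then combine Lemma~\ref{le:gam-loc-V} with Lemma~\ref{le:LV-cat} to analyse dualisability.

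First I would verify that the tensor product on $\cat T$ restricts to $\loc_V\cat T$. If $X,Y\in\loc_V\cat T$ then $X\cong \loc_V\one\otimes X$, so $X\otimes Y\cong \loc_V\one\otimes(X\otimes Y)$ lies in the image of $\loc_V$ and is $V$-local; the same identity shows that $\loc_V\one$ serves as the tensor unit. For the function object, given $Y\in\loc_V\cat T$ and any $X\in\cat T$, any $V$-torsion object $Z$ satisfies $Z\otimes X\cong \gam_V\one\otimes Z\otimes X\cong \gam_V(Z\otimes X)$ and is again $V$-torsion, so
\[
\Hom_{\cat T}(Z,\fHom(X,Y))\cong \Hom_{\cat T}(Z\otimes X,Y)=0.
\]
Hence $\fHom_{\cat T}(X,Y)$ is $V$-local, and the adjunction from $\cat T$ restricts to give the function object on $\loc_V\cat T$. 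Compact generation and the identification of $\comp{(\loc_V\cat T)}$ with summands of $\loc_V(\comp{\cat T})$ are already supplied by Lemma~\ref{le:gam-loc-V}; in particular $\loc_V\one$ is compact in $\loc_V\cat T$.

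The main task is the equivalence of compactness and dualisability in $\loc_V\cat T$. For compact~$\Rightarrow$~dualisable, since both properties pass to direct summands, it suffices to show that each $\loc_V C$ with $C\in \comp{\cat T}$ is dualisable. By Lemma~\ref{le:LV-cat} its Spanier--Whitehead dual in $\loc_V\cat T$ is $\loc_V\swd C$, and the defining map $\loc_V\swd C\otimes Y\to \fHom(\loc_V C,Y)$ for $Y\in\loc_V\cat T$ can be rewritten using rigidity of $\cat T$ (so that $\fHom(C,Y)\cong \swd C\otimes Y$) together with the observation that $\swd C\otimes Y$ is itself $V$-local (indeed $\Hom(Z,\swd C\otimes Y)\cong \Hom(Z\otimes C,Y)=0$ for any $V$-torsion $Z$). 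Both sides then identify with $\swd C\otimes Y$. Conversely, if $X\in\loc_V\cat T$ is dualisable then
\[
\Hom_{\loc_V\cat T}(X,-)\cong \Hom_{\loc_V\cat T}(\loc_V\one,\swd X\otimes -)
\]
commutes with coproducts in $\loc_V\cat T$, because $\loc_V\one$ is compact there; hence $X$ is compact.

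The main obstacle I expect is the bookkeeping in the dualisability verification: correctly identifying the dual via Lemma~\ref{le:LV-cat} and reconciling the two descriptions of $\fHom(\loc_V C,Y)$. The remaining ingredients, including the tensor-structure claims and the converse implication, are essentially formal consequences of the recollement and the projection formula.
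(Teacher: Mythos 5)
Your proposal is correct and follows essentially the same route as the paper: the tensor-structure claims are deduced from the projection formula $\loc_V X\cong\loc_V\one\otimes X$, the function-object claim from the adjunction and the fact that $V$-torsion objects are closed under $-\otimes X$, compact generation from Lemma~\ref{le:gam-loc-V}, and compact~$\Leftrightarrow$~dualisable by checking $\loc_V C$ for compact $C$ via Lemma~\ref{le:LV-cat} in one direction and compactness of the unit in the other. The only cosmetic difference is that you reprove the dualisable~$\Rightarrow$~compact direction by hand, whereas the paper invokes Proposition~\ref{pr:dualisable}(3) applied to $\loc_V\one\otimes X\cong X$.
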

 
 \begin{proof}
 The claim about compact generation and the description of compact objects is from  Lemma~\ref{le:gam-loc-V}. For $X,Y$ in $\cat T$ the  maps $X\to \loc_VX$ and $Y\to\loc_VY$ induce isomorphisms
 \[
 \loc_V(X\otimes Y)  \longiso\loc_VX\otimes Y \longiso  \loc_V X\otimes \loc_VY\,.
 \]
 It follows that when $X,Y$ are $V$-local, so is $X\otimes Y$, and
 hence that the tensor product on $\cat T$ restricts to a tensor
 product on $\loc_V\cat T$. Setting $Y=\one$ one gets also that
 $\loc_V\one$ is the unit in $\loc_V\cat T$.  The isomorphism
 \[\Hom_{\cat T}(-, \fHom(X,Y))\cong \Hom_{\cat T}(-\otimes X,Y)\]
 shows that $\fHom(X,Y)$ is $V$-local when $Y$ is $V$-local. Thus the
 function object in $\loc_V{\cat T}$ is the same as the one in
 $\cat T$.

 Since $\loc_V\one$ is compact in $\loc_V\cat T$, dualisable objects are
 compact, by Proposition~\ref{pr:dualisable}. As to the converse, it
 suffices to verify that objects of the form $\loc_VC$, with $C$ in
 $\comp{\cat T}$, are dualisable in $\loc_V{\cat T}$.  For any $Y$ in
 ${\loc_V\cat T}$ one has isomorphisms
\begin{align*}
\fHom(\loc_VC,\loc_V\one)\otimes Y
	& \cong \loc_V \fHom(C,\one) \otimes Y \\
	& \cong  \fHom(C,\one) \otimes Y \\  
	& \cong \fHom(C,Y) \\
	& \cong \fHom(\loc_VC,Y)
\end{align*}
where the first isomorphism is by the commutativity  of the square in Lemma~\ref{le:LV-cat}.  Thus $\loc_VC$ is dualisable. 
 \end{proof}
 
 Next we discuss the analogues of the preceding results for
 $\gam_V{\cat T}$, the subcategory of $\cat T$ consisting of
 $V$-torsion objects.

\begin{lemma}
\label{le:gamV-cat}
For each  specialisation closed subset $V\subseteq \Spec R$ one has commutative diagrams of exact  functors:
\[
  \begin{tikzcd}
    \comp{\cat T} \arrow[d,swap,"{\fHom(-,\one)}"]\arrow[rr,"\gam_V"]&&    \gam_V{\cat T} \arrow[d,"{\fHom(-,\gam_V\one)}"]\\
   \comp{\cat T}\arrow[rr,"\lam^V"]&&\lam^V{\cat T}
 \end{tikzcd}
 \quad 
  \begin{tikzcd}
    \comp{\cat T}\arrow[d,swap,"{\fHom(-,\one)}"]\arrow[rr,"\lam^V"] &&    \lam^V{\cat T} \arrow[d,"{\gam_V\fHom(-,\lam^V\one)}"]\\
   \comp{\cat T}\arrow[rr,"\gam_V"]&&\gam_V{\cat T}
 \end{tikzcd}
\]
\end{lemma}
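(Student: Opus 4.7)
Both squares unpack to a natural isomorphism between two composite functors on $\comp{\cat T}$, so the plan in each case is to chain together the formal tools recorded immediately before the lemma: the projection formulas $\gam_V X\cong \gam_V\one\otimes X$ and dualisability of compact objects ($\fHom(C,Y)\cong \swd C\otimes Y$ for $C\in\comp{\cat T}$); the two halves of the displayed identity
\[
\fHom(\gam_V X,Y)\cong \lam^V\fHom(X,Y)\cong \fHom(X,\lam^V Y);
\]
and Proposition~\ref{pr:tor-complete}, which gives $\lam^V\gam_V\cong \lam^V$ and $\gam_V\lam^V\cong \gam_V$ on $\cat T$.

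For the first square, the claim is $\fHom(\gam_V C,\gam_V\one)\cong \lam^V\swd C$, and I would proceed in four steps. First, apply the identity $\fHom(\gam_V X,Y)\cong \lam^V\fHom(X,Y)$ with $X=C$, $Y=\gam_V\one$ to strip the $\gam_V$ off the first slot, producing $\lam^V\fHom(C,\gam_V\one)$. Second, use dualisability of $C$ to rewrite the inner function object as $\swd C\otimes \gam_V\one$. Third, apply the projection formula to identify this with $\gam_V\swd C$. Fourth, invoke Proposition~\ref{pr:tor-complete} to collapse $\lam^V\gam_V\swd C$ to $\lam^V\swd C$.

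For the second square, the claim is $\gam_V\fHom(\lam^V C,\lam^V\one)\cong \gam_V\swd C$. Here I would start from $\fHom(\lam^V C,\lam^V\one)$, apply the other half of the displayed identity, $\fHom(X,\lam^V Y)\cong \fHom(\gam_V X,Y)$, to move the outer $\lam^V$ across and produce $\fHom(\gam_V\lam^V C,\one)$; then replace $\gam_V\lam^V C$ by $\gam_V C$ using Proposition~\ref{pr:tor-complete}; then rewrite $\fHom(\gam_V C,\one)$ as $\lam^V\swd C$ via the first half of the displayed identity. Finally, applying $\gam_V$ and using $\gam_V\lam^V\cong \gam_V$ yields $\gam_V\swd C$, as required.

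I do not anticipate any serious obstacle: each step is a purely formal manipulation using the tools already available, and each intermediate isomorphism is natural in $C$, so naturality of the composites follows for free. The only mild subtlety is bookkeeping the direction of the two halves of the $(\gam_V,\lam^V)$ adjunction and being consistent about which slot of $\fHom$ one is acting on; writing the chain out carefully makes this transparent.
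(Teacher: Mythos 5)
Your argument is correct and uses the same toolbox as the paper's proof — the identity $\fHom(\gam_V X,Y)\cong\lam^V\fHom(X,Y)\cong\fHom(X,\lam^V Y)$ together with $\lam^V\gam_V\cong\lam^V$ and $\gam_V\lam^V\cong\gam_V$ from Proposition~\ref{pr:tor-complete}. The only cosmetic difference is in the first square, where you route through dualisability of $C$ and the projection formula to identify $\fHom(C,\gam_V\one)$ with $\gam_V\swd C$, while the paper instead passes $\gam_V$ across to $\lam^V$ on the second slot via $\fHom(\gam_V C,\gam_V\one)\cong\fHom(C,\lam^V\gam_V\one)\cong\fHom(C,\lam^V\one)$; both land at $\lam^V\swd C$, and the second square is handled the same way in both.
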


\begin{proof}
Fix a compact object $C$ in $\cat T$. The commutativity of the square on the left is verified below:
  \[
  \fHom(\gam_V C,\gam_V\one)\cong    \fHom(C,\lam^V\one)\cong\lam^V\fHom(C,\one)\,.
\]
Next we verify the commutativity of the square on the right:
\begin{align*}
  \gam_V\fHom(\lam^V C,\lam^V\one)
  &\cong \gam_V\fHom(C,\lam^V \one)\\
  &\cong \gam_V\lam^V\fHom(C,\one)\\
  &\cong \gam_V\fHom(C,\one)\,.\qedhere
\end{align*}  
\end{proof}

It follows from the result below that, typically, $\gam_{V}{\cat T}$
has more dualisable objects than compact ones.

 \begin{proposition}
 \label{pr:gam-V}
 Let $V\subseteq \Spec R$ be a specialisation closed subset. The category $\gam_V{\cat T}$ is 
 compactly generated and inherits a tensor product from $\cat T$. The unit is $\gam_V\one$ and
 the function object is $\gam_V\fHom(-,-)$. The compact objects in $\gam_V\cat T$ are
 \[
\comp{(\gam_V\cat T)} = \comp{\cat T}\cap \gam_V\cat T\,,
\]
and these are dualisable. All objects in $\gam_V{(\comp{\cat T})}$ are dualisable.
 \end{proposition}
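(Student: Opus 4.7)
The plan is to mirror the proof of Proposition~\ref{pr:loc-V}, inserting $\gam_V$ at appropriate places so that everything lands in $\gam_V\cat T$; the exchange formulas between $\gam_V$, $\lam^V$, $\otimes$, and $\fHom$ recalled at the start of this section are what make this routine. Compact generation together with the description $\comp{(\gam_V\cat T)}=\comp{\cat T}\cap\gam_V\cat T$ is immediate from Lemma~\ref{le:gam-loc-V}. The formula $\gam_V X\cong\gam_V\one\otimes X$ shows that the tensor product of a $V$-torsion object with any object is $V$-torsion, so $\otimes$ restricts to $\gam_V\cat T$; and since $\gam_V\one\otimes X\cong\gam_V X\cong X$ for $X\in\gam_V\cat T$, the unit is $\gam_V\one$.

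For the function object I would fix $X,Y\in\gam_V\cat T$ and verify that $\gam_V\fHom(X,Y)$ works. The defining triangle $\gam_V\fHom(X,Y)\to\fHom(X,Y)\to L_V\fHom(X,Y)$ shows, for any $V$-torsion object $Z$, that $\Hom_{\cat T}(Z,\gam_V\fHom(X,Y))\iso\Hom_{\cat T}(Z,\fHom(X,Y))$ since the third term is $V$-local. Combined with the adjunction in $\cat T$, this yields the required adjunction in $\gam_V\cat T$ with function object $\gam_V\fHom(X,Y)$.

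For the dualisability of a compact $C\in\comp{\cat T}\cap\gam_V\cat T$, I would compute its Spanier--Whitehead dual in $\gam_V\cat T$ as
\[
\fHom_{\gam_V\cat T}(C,\gam_V\one)=\gam_V\fHom(C,\gam_V\one)\cong\gam_V\swd C\,,
\]
using the left square of Lemma~\ref{le:gamV-cat} together with dualisability of $C$ in $\cat T$. For any $Y\in\gam_V\cat T$ one then has
\[
\gam_V\swd C\otimes Y\cong\swd C\otimes\gam_V Y\cong\swd C\otimes Y\cong\fHom(C,Y)\cong\fHom_{\gam_V\cat T}(C,Y)\,,
\]
where the last identification uses that $\swd C\otimes Y$ is already $V$-torsion. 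This exhibits the required isomorphism.

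For the final claim, dualisability of $\gam_V C$ with $C\in\comp{\cat T}$ — which is interesting precisely because $\gam_V C$ need not be compact in $\cat T$ — follows by a parallel computation: the candidate dual is
\[
\gam_V\fHom(\gam_V C,\gam_V\one)\cong\gam_V\lam^V\fHom(C,\gam_V\one)\cong\gam_V\lam^V\swd C\cong\gam_V\swd C\,,
\]
by the $\fHom$-identities recalled at the start of this section and Proposition~\ref{pr:tor-complete}. The verification against $Y\in\gam_V\cat T$ reproduces the display of the previous paragraph. The only step that requires care — and it is the main, albeit minor, technical point — is tracking the completion $\lam^V$ produced whenever $\fHom$ is applied into a $V$-torsion object; Proposition~\ref{pr:tor-complete} lets one absorb such a $\lam^V$ after a subsequent application of $\gam_V$, which is precisely what makes these calculations close.
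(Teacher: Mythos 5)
Your proof is correct and follows essentially the same approach as the paper: you rely on Lemma~\ref{le:gam-loc-V} for compact generation, on the identity $\gam_V X\cong\gam_V\one\otimes X$ for the tensor structure, on the exchange formulas between $\gam_V$, $\lam^V$, and $\fHom$ (Lemma~\ref{le:gamV-cat} and Proposition~\ref{pr:tor-complete}) for the function object and the dualisability computation. The only organizational difference is that you treat the dualisability of $\comp{\cat T}\cap\gam_V\cat T$ and of $\gam_V(\comp{\cat T})$ as two separate verifications, whereas the paper observes that the first is the special case $C\cong\gam_V C$ of the second and proves the second chain of isomorphisms once; likewise, for the function object the paper invokes the abstract fact that a right adjoint to a full embedding preserves internal homs, while you unwind this into the localisation triangle — both are fine.
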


 \begin{proof}
 Compact generation and description of compact objects is from  Lemma~\ref{le:gam-loc-V}. From the description of $\gam_V$ as a tensor functor it is clear that the tensor product on
   $\cat T$ restricts to a tensor product on $\gam_V{\cat T}$. The
   functor $\gam_V$ provides a right adjoint for the inclusion
   $\gam_V{\cat T}\to\cat T$, and therefore it preserves function objects.
   
It remains to verify that $\gam_VC$ is dualisable in $\gam_V{\cat T}$ for any compact object $C$ in $\cat T$.    For any $Y$ in $\gam_V{\cat T}$ one gets
\begin{align*}
  \gam_V\fHom(\gam_V C,\gam_V\one) \otimes Y
  &\cong   \gam_V\lam^V\swd{(C)}\otimes Y\\
  &\cong \gam_V\swd{(C)}\otimes Y \\
  &\cong \gam_V(\swd{(C)}\otimes Y) \\
  &\cong   \gam_V \fHom(C,Y)\\
  &\cong   \gam_V\fHom(\gam_V C,Y)
\end{align*}
where  the first isomorphism follows from the commutativity of the second square in   Lemma~\ref{le:gamV-cat}.
Thus $\gam_VC$ is dualisable.
\end{proof}

Next we focus on $\gam_{\fp}{\cat T}$, the subcategory of $\cat T$ consisting of $\fp$-local 
$\fp$-torsion objects, for some $\fp$ in $\Spec R$. The Spanier--Whitehead dual in the category 
$\gam_\fp \cat T$ is given by the functor
\[
\gam_\fp \fHom(-,\one_\fp) = \gam_\fp \fHom(-,\gam_\fp \one)\,.
\]
Since $\gam_{\fp}{\cat T}=\gam_{V(\fp)}{\cat T}\cap L_{Z(\fp)}\cat T$, from Propositions~\ref{pr:loc-V} and \ref{pr:gam-V} we get the result below.

\begin{corollary}
\label{co:plocal-cat}
For each $\fp$ in $\Spec R$ the category $\gam_{\fp}{\cat T}$ is
compactly generated and inherits a tensor product from $\cat T$.  The
unit is $\gam_\fp\one$ and the function object is
$\gam_\fp\fHom(-,-)$. The compact objects in $\gam_{\fp}\cat T$ are
\[
\comp{(\gam_{\fp}\cat T)} =  \comp{(\cat T_\fp)}\cap \gam_{\fp}\cat T\,.
\]
All objects in  $\gam_\fp(\comp{\cat T})$ are dualisable. \qed
 \end{corollary}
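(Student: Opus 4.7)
The plan is to deduce this corollary by applying Propositions~\ref{pr:loc-V} and \ref{pr:gam-V} in succession, exploiting the factorisation $\gam_\fp=\gam_{V(\fp)}\circ L_{Z(\fp)}$ implicit in the definition of the $\fp$-local $\fp$-torsion subcategory.

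First I would apply Proposition~\ref{pr:loc-V} with $V=Z(\fp)$. This gives that $\cat T_\fp=L_{Z(\fp)}\cat T$ is a compactly generated tensor triangulated category, with tensor product and internal function object inherited from $\cat T$, unit $\one_\fp$, and—crucially—the property that compact and dualisable objects coincide. By Lemma~\ref{le:local-noetherian} the $R$-action on $\cat T_\fp$ factors through the noetherian graded local ring $R_\fp$. Hence $\cat T_\fp$ meets all the hypotheses imposed at the start of Section~\ref{se:dualisability}, so Proposition~\ref{pr:gam-V} becomes applicable to it.

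Second, I would apply Proposition~\ref{pr:gam-V} inside $\cat T_\fp$ with specialisation closed subset $V(\fp)$. Since every object of $\gam_\fp\cat T$ is $\fp$-local, we have $\gam_\fp\cat T=\gam_{V(\fp)}(\cat T_\fp)$, so the proposition yields at once that $\gam_\fp\cat T$ is compactly generated, with tensor product inherited from $\cat T_\fp$ (hence from $\cat T$), unit $\gam_{V(\fp)}\one_\fp=\gam_\fp\one$, internal Hom $\gam_\fp\fHom(-,-)$, and the equality
\[
\comp{(\gam_\fp\cat T)}=\comp{(\cat T_\fp)}\cap\gam_\fp\cat T\,.
\]
For the final claim, given $C\in\comp{\cat T}$, Proposition~\ref{pr:loc-V} shows that $C_\fp=L_{Z(\fp)}C$ is compact in $\cat T_\fp$, and then Proposition~\ref{pr:gam-V} applied inside $\cat T_\fp$ shows that $\gam_\fp C=\gam_{V(\fp)}C_\fp$ is dualisable in $\gam_\fp\cat T$.

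The proof is essentially bookkeeping, and I do not expect a genuine obstacle. The only point requiring care is verifying that $\cat T_\fp$ satisfies the standing hypotheses needed to invoke Proposition~\ref{pr:gam-V} a second time (in particular that it remains rigidly compactly generated with a noetherian central action); both of these are built into the conclusion of Proposition~\ref{pr:loc-V} and Lemma~\ref{le:local-noetherian}.
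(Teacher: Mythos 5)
Your proposal is correct and follows the paper's route exactly: the paper's proof is the single sentence preceding the corollary, namely that $\gam_\fp\cat T=\gam_{V(\fp)}\cat T\cap L_{Z(\fp)}\cat T$ and so the result follows by combining Propositions~\ref{pr:loc-V} and \ref{pr:gam-V}. You have simply spelled out the bookkeeping — first localising at $\fp$ via Proposition~\ref{pr:loc-V} to get the rigidly compactly generated $R_\fp$-linear category $\cat T_\fp$, then applying Proposition~\ref{pr:gam-V} inside $\cat T_\fp$ with the specialisation closed subset $\{\fp R_\fp\}$ — which is precisely what the paper's terse argument amounts to.
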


Here is the analogous result for $\lam^\fp\cat T$.

\begin{corollary}
\label{co:plocal-cat-lambda}
For each $\fp$ in $\Spec R$ the category $\lam^{\fp}\cat T$ is compactly generated and tensor triangulated, with product $\lam^\fp(-\otimes-)$.  The unit is $\lam^\fp(\one_\fp)$ and the function object is $\fHom(-,-)$, the function object on $\cat T$. The compact objects in $\lam^{\fp}\cat T$ are
\[
\comp{(\lam^{\fp}\cat T)} =  \comp{(\cat T_\fp)}\cap \gam_{\fp}\cat T=\comp{(\gam_{\fp}\cat T)} \,.
\]
All objects in  $\lam^\fp(\comp{\cat T}_\fp)$ are dualisable. \qed
 \end{corollary}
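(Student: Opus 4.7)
The strategy is to transport the tensor-triangulated structure of $\gam_\fp\cat T$, established in Corollary~\ref{co:plocal-cat}, across the triangle equivalence $\lam^\fp\colon\gam_\fp\cat T\iso\lam^\fp\cat T$ from Proposition~\ref{pr:tor-complete}. Equivalences preserve compactness, so $\lam^\fp\cat T$ is compactly generated; by Lemma~\ref{le:p-torsion} the compact objects of $\gam_\fp\cat T$ and $\lam^\fp\cat T$ coincide as subcategories of $\cat T$, so combining with Corollary~\ref{co:plocal-cat} gives $\comp{(\lam^\fp\cat T)}=\comp{(\gam_\fp\cat T)}=\comp{(\cat T_\fp)}\cap\gam_\fp\cat T$.

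To identify the transported tensor product, I would compute for $A,B\in\lam^\fp\cat T$ the image under $\lam^\fp$ of the $\gam_\fp$-tensor product $\gam_\fp A\otimes\gam_\fp B$. Using $\gam_\fp X\cong\gam_\fp\one\otimes X$ together with the tensor-idempotence $\gam_\fp\one\otimes\gam_\fp\one\cong\gam_\fp\one$, one gets $\gam_\fp A\otimes\gam_\fp B\cong\gam_\fp(A\otimes B)$, and then Proposition~\ref{pr:tor-complete} yields the product $\lam^\fp(A\otimes B)$; the same reasoning transports the unit $\gam_\fp\one$ of $\gam_\fp\cat T$ to $\lam^\fp\gam_\fp\one\cong\lam^\fp\one=\lam^\fp(\one_\fp)$. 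For the function object, the input is the interplay formula \eqref{eq:loc-hom}: the transported object $\lam^\fp\gam_\fp\fHom(\gam_\fp A,\gam_\fp B)$ collapses first to $\lam^\fp\fHom(\gam_\fp A,\gam_\fp B)$ by Proposition~\ref{pr:tor-complete}, and then to $\fHom(A,B)$ via two applications of \eqref{eq:loc-hom} together with $\lam^\fp B=B$; moreover \eqref{eq:loc-hom} also shows $\fHom(A,B)$ is automatically $\lam^\fp$-complete, so the internal hom of $\cat T$ restricts to $\lam^\fp\cat T$ without modification.

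Finally, dualisability is preserved by tensor equivalences: each $\gam_\fp C$ with $C\in\comp{\cat T}$ is dualisable in $\gam_\fp\cat T$ by Corollary~\ref{co:plocal-cat}, and under the equivalence it maps to $\lam^\fp\gam_\fp C=\lam^\fp C=\lam^\fp(C_\fp)$, which is therefore dualisable in $\lam^\fp\cat T$; this accounts for all of $\lam^\fp(\comp{\cat T}_\fp)$ up to direct summands. The subtle step is the function object identification: the naive transport along the equivalence yields the formula $\gam_\fp\fHom(\gam_\fp-,\gam_\fp-)$, and only by combining \eqref{eq:loc-hom} with the $\lam^\fp$-completeness of objects in $\lam^\fp\cat T$ does one see this collapse back to the unmodified $\fHom$ of $\cat T$.
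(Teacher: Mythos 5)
Your proof is correct and follows the route the paper leaves implicit: transport the tensor structure on $\gam_\fp\cat T$ from Corollary~\ref{co:plocal-cat} across the Dwyer--Greenlees equivalence $\lam^\fp\colon\gam_\fp\cat T\iso\lam^\fp\cat T$, using $\gam_\fp\one\otimes\gam_\fp\one\cong\gam_\fp\one$, the formula~\eqref{eq:loc-hom}, and $\comp{(\gam_\fp\cat T)}=\comp{(\lam^\fp\cat T)}$ from Lemma~\ref{le:p-torsion}. One small caution on wording: you write $\lam^\fp\gam_\fp\one\cong\lam^\fp\one$ and $\lam^\fp\gam_\fp C\cong\lam^\fp C$, but the paper explicitly warns that $\lam^\fp\gam_\fp\not\cong\lam^\fp$ on all of $\cat T$; the chain should go through the $\fp$-local objects, e.g.\ $\lam^\fp\gam_\fp\one = \lam^\fp\gam_\fp(\one_\fp)\cong\lam^\fp(\one_\fp)$, which is what Proposition~\ref{pr:tor-complete} actually licenses.
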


\section{Finite length objects}
\label{se:finite-length}
Let $R$ be a graded commutative noetherian  ring and $\cat T$ a compactly generated $R$-linear triangulated category; it need not be tensor triangulated. In what follows we consider various types of finiteness conditions for objects in $\cat T$ and its various subcategories. 
Here are the relevant concepts.

\begin{definition}
\label{de:finiteness}
    An object $X$ in $\cat T$ is said to be 
\begin{enumerate}[\quad\rm(1)]
\item  \emph{noetherian} over $R$ if the $R$-module $\Hom^*_{\cat T}(C,X)$ is noetherian for all $C\in\comp{\cat T}$,
\item  \emph{artinian} over $R$ if the $R$-module $\Hom^*_{\cat T}(C,X)$ is artinian for all $C\in\comp{\cat T}$,
\item  \emph{of finite length} over $R$ if $\Hom^*_{\cat T}(C,X)$ is of finite length over $R$ for all $C\in\comp{\cat T}$.
\end{enumerate}
\end{definition}

For tensor triangulated categories there is a more intrinsic notion of a finite length object.

\begin{remark}
\label{re:finite-length}
    Let $\cat T=(\cat T,\otimes,\one)$ be tensor triangulated and suppose it admits an $R$-action via a map of graded rings $R\to S=\End_{\cat T}^*(\one)$. Then we say that an object $X$ in $\cat T$ is \emph{of finite length} if $\Hom^*_{\cat T}(C,X)$ is of finite length over $S$ for all $C\in\comp{\cat T}$. This condition coincides with the one over $R$ when, for example, $S$ is finite over $R$ or when $R$ is local and $S$ is its completion at the maximal ideal.
\end{remark}

\begin{remark}
\label{re:degree-zero}
The above finiteness conditions on graded morphisms imply the analogous conditions for the degree zero morphisms, as a module over $R^0$, because for any objects $X,Y$ in $\cat T$ and  $M=\Hom^*_{\cat T}(X,Y)$, the poset of submodules of the $R^0$-module $M^0=\Hom_{\cat T}(X,Y)$ embeds into the poset of submodules of the graded $R$-module $M$. The map sends $U\subseteq M^0$ to $RU$, and its inverse sends $V\subseteq M$ to $V^0$.
\end{remark}

We proceed with a characterisation of the finite length objects in $\gam_\fp \cat T$. 
 
\begin{proposition}
    \label{pr:artinian}
For  $\fp\in\Spec R$ and $X\in\cat T_\fp$ the conditions below are equivalent.
\begin{enumerate}[\quad\rm(1)]
\item $X$ is in $\gam_{\fp}\cat T$ and of finite length over $R_\fp$;
\item $X$ is artinian  over $R_\fp$;
\item $\Hom^*_{\cat T}(C,X)$ is artinian over $R_\fp$ for each compact object $C$ in ${\cat T}$.
\end{enumerate}
\end{proposition}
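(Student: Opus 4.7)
The first implication, (1)$\Rightarrow$(2), is immediate, since a finite-length module is in particular artinian. The equivalence (2)$\Leftrightarrow$(3) is essentially a translation of Definition~\ref{de:finiteness} to the $R_\fp$-linear category $\cat T_\fp$: by Proposition~\ref{pr:loc-V} the compact objects of $\cat T_\fp$ are (direct summands of) the localisations $C_\fp$ of compact $C\in\comp{\cat T}$, and Lemma~\ref{le:local-noetherian} identifies $\Hom^*_{\cat T}(C,X)\cong\Hom^*_{\cat T}(C_\fp,X)$ for $X\in\cat T_\fp$. Thus the artinian condition in (2) unpacks to the pointwise condition in (3), and the substantive content is (3)$\Rightarrow$(1).

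Assuming (3), I first show that $X\in\gam_\fp\cat T$. Since $X$ is already $\fp$-local, it suffices to verify that each $R_\fp$-module $\Hom^*_{\cat T}(C,X)$ is $V(\fp)$-torsion. The key module-theoretic fact is that any artinian graded module $M$ over the graded local noetherian ring $(R_\fp,\fp R_\fp)$ is automatically $\fp$-power torsion: for a homogeneous $m\in M$ the cyclic submodule $R_\fp\cdot m$ is both artinian (as a submodule of $M$) and noetherian (being cyclic over $R_\fp$), hence of finite length, which forces $\fp^k m=0$ for some $k$.

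For the finite-length claim, fix $C\in\comp{\cat T}$ and form the Koszul object $\kos C\fp$. By Lemma~\ref{le:p-torsion} it is compact in $\cat T$, lies in $\comp{(\gam_\fp\cat T)}$, and satisfies $\fp^n\cdot\End^*_{\cat T}(\kos C\fp)=0$ for some $n$. Consequently $\Hom^*_{\cat T}(\kos C\fp,X)$ is $\fp^n$-annihilated and, by hypothesis, artinian; hence it is an artinian module over the artinian ring $R_\fp/\fp^n$, and so of finite length. Invoking Lemma~\ref{le:koszul-tt} with $\fa=\fp$ then translates this into finite length of the Koszul cohomology $H^*(\kos M\fp)$, where $M\coloneqq\Hom^*_{\cat T}(C,X)$; in the principal case this amounts to finite length of the quotient $M/\fp M$ and of the submodule $\{m\in M\mid\fp m=0\}$.

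The final — and main — step is to upgrade this data to finite length of $M$. Since $M$ is artinian, the descending chain $\fp^k M$ stabilises at some $L=\fp^n M$ satisfying $\fp L=L$; one must conclude $L=0$, at which point $M$ is $\fp^n$-annihilated and therefore an artinian module over the artinian ring $R_\fp/\fp^n$, hence of finite length. This is the delicate point, because an artinian module over a graded local noetherian ring need not have finite length in general (e.g.\ the injective hull of the residue field is artinian and $\fp$-divisible). The uniformity of hypothesis~(3) across \emph{all} compact $C$ is crucial here: by applying the Koszul argument with the higher powers $\fp^k$ in place of $\fp$, one obtains simultaneous finite-length bounds on $M/\fp^k M$ and on the ascending socle filtration $\{m\in M\mid\fp^k m=0\}$, and these together force the $\fp$-divisible submodule $L$ to vanish, completing the proof.
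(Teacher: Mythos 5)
There is a genuine gap, and it stems from a misreading of condition (1). In that condition, ``of finite length over $R_\fp$'' is a condition checked against the compact objects of $\gam_\fp\cat T$ (this is how the paper's proof reads the hypothesis, and it is the only reading under which the proposition is true). Conditions (2) and (3), by contrast, range over compact objects of $\cat T$ (equivalently of $\cat T_\fp$). These are genuinely different classes of compacts: $\one_\fp$ is compact in $\cat T_\fp$ but typically not in $\gam_\fp\cat T$. Consequently (1)$\Rightarrow$(2) is \emph{not} the immediate implication ``finite length implies artinian''; it is the substantive direction of the proposition. The paper handles it by passing from a Koszul object $\kos C\fp\in\comp{(\gam_\fp\cat T)}$, for which the hypothesis gives finite length, back to $C$ itself via Lemma~\ref{le:coNak}, which needs the torsion hypothesis $X\in\gam_\fp\cat T$ to conclude that $\Hom^*_{\cat T}(C,X)$ is artinian. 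You have dismissed precisely the step that requires an argument.

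In your attempted (3)$\Rightarrow$(1), you correctly show $X\in\gam_\fp\cat T$ and that $\Hom^*_{\cat T}(\kos C\fp,X)$ has finite length for each $C\in\comp{\cat T}$. At that point you are done, since $\comp{(\gam_\fp\cat T)}$ is the thick closure of such Koszul objects (Lemma~\ref{le:p-torsion}). But you then attempt to upgrade this to finite length of $M=\Hom^*_{\cat T}(C,X)$ for arbitrary compact $C$ in $\cat T$, which is both unnecessary for (1) and in fact false: taking $X=T_\fp\one$ and $C=\one$ gives $\Hom^*_{\cat T}(\one,T_\fp\one)\cong I(\fp)$, which is artinian but not of finite length. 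Your ``final step'' asserting that the uniformity of hypothesis (3) forces the $\fp$-divisible submodule $L=\bigcap_k\fp^k M$ to vanish is not supported by an actual argument, and the example just given shows it cannot be made to work: the hypotheses of (3) are satisfied, yet $L=I(\fp)\neq 0$. The proposal thus inverts the difficulty of the proposition and introduces a false claim in the direction it treats as hard.
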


\begin{proof}
By Lemmas~\ref{le:gam-loc-V} and \ref{le:local-noetherian} we can pass to $\cat T_\fp$ and $R_\fp$ and  suppose $R$ is local with maximal ideal  $\fp$. In particular,  (2) and (3) are equivalent.

  (1)$\Rightarrow$(2):  Let $C$ be a compact object in $\cat T$ and fix
  a sequence of elements $\bsr\coloneqq r_1,\ldots,r_n$ which
  generates the ideal $\fp$. Then $\kos C{\bsr}$ is $\fp$-torsion and
  hence in $\comp{(\gam_\fp\cat T)}$. The hypothesis yields that the
  $R$-module $\Hom_{\cat T}^*(\kos C{\bsr},X)$ has finite length, and
  in particular it is artinian. Since $\kos C{\bsr}$ is obtained from
  $C$ by an iterated Koszul construction, it suffices to prove that
  for any $r\in \fp$ and compact object $C$ in $\cat T$, if the
  $R$-module $\Hom_{\cat T}^*(\kos Cr,X)$ is artinian, then so is
  $\Hom_{\cat T}^*(C,X)$.

By definition of $\kos Cr$  one has an exact triangle  $C\xra{r} \Sigma^{|r|} C\to\kos{C}{r}\to $ and that induces an  exact sequence of $R$-modules
\[
\Hom_{\cat T}^*(\kos{C}{r},X)\lra \Hom_{\cat T}^{*-|r|}(C,X)\xra{\ r\ }  \Hom_{\cat T}^*(C,X)\,.
  \] 
The module $\Hom_{\cat T}^*(C,X)$ is $\fp$-torsion, since $X$ is in $\gam_{\fp}\cat T$.  As $\Hom_{\cat T}^*(\kos{C}{r},X)$ is artinian,  Lemma~\ref{le:coNak} below yields that $\Hom_{\cat T}^*(C,X)$ is also artinian.
  
  (2)$\Rightarrow$(1): Artinian $R$-modules are $\fp$-torsion, so the hypothesis implies $X$ is in $\gam_{\fp}\cat T$.  Any compact object $C$ in $\gam_{\fp}\cat T$ is also compact in $\cat T$, so the hypothesis implies that the $R$-module $\Hom^*_{\cat T}(C,X)$ is artinian; it is also annihilated by a power of $\fp$, by Lemma~\ref{le:p-torsion}.  Thus it has finite length.
\end{proof}

The  following observation was used in the argument above.

\begin{lemma}
\label{le:coNak}
Let $R$ be a local ring with maximal ideal $\fm$ and $M$ an
$\fm$-torsion $R$-module.  If for an $r\in\fm$ the $R$-module
$ \Ker(M\xra{r} M)$ is artinian, then so is $M$.
\end{lemma}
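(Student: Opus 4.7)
The plan is to combine a socle argument with graded Matlis duality. First I would observe that the socle $\Soc M \coloneqq (0:_M \fm)$ is contained in $K = (0:_M r)$, since $r \in \fm$. Hence $\Soc M$ inherits artinianness from $K$. But $\Soc M$ is annihilated by $\fm$, so it is a graded module over the graded residue field $R/\fm$; any such module decomposes as a direct sum of shifts of $R/\fm$, and the artinian condition forces the sum to be finite. In particular $\Soc M$ has finite length.

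Next I would show that $M$ is an essential extension of $\Soc M$. Given any nonzero $x\in M$, the $\fm$-torsion hypothesis produces a minimal integer $n\geq 1$ with $\fm^n x = 0$; choosing $y\in \fm^{n-1}$ with $yx\neq 0$ yields a nonzero element of $Rx\cap \Soc M$. Consequently $M$ embeds into the graded injective hull $E(\Soc M)$, which by the previous step is a finite direct sum of shifts of the injective hull $E(R/\fm)$ of the residue field.

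Finally, I would invoke the standard fact that $E(R/\fm)$ is artinian over the graded noetherian local ring $R$; this is the graded analogue of the classical Matlis-duality result pairing noetherian and artinian modules over a noetherian local ring. Since finite direct sums and submodules of artinian modules are artinian, $M$ is artinian, as required.

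The only mildly delicate point is the appeal to graded Matlis duality to conclude that $E(R/\fm)$ is artinian; this is well-documented and would be cited rather than reproved. Everything else is completely formal from the inclusion $\Soc M\subseteq K$, the $\fm$-torsion hypothesis, and the structure theory of modules over a graded field.
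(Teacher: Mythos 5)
Your proof is correct and follows essentially the same route as the paper's: reduce to finiteness of $\Soc M$ via Matlis duality and the injective hull of an $\fm$-torsion module, then deduce finiteness of $\Soc M$ from the inclusion into the artinian kernel $K$. The only cosmetic difference is that you observe $\Soc M\subseteq K$ directly, whereas the paper phrases the same fact by applying $\Hom^*_R(R/\fm,-)$ to the exact sequence $0\to K\to M\xra{r}M$ and noting that the right-hand map is zero.
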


\begin{proof}
  Since $M$ is $\fm$-torsion its injective hull is a direct sum of copies of twists of $I(\fm)$, the number of copies being equal to the length of $\Soc M$, the socle of $M$. The module $I(\fm)$ is
  artinian by Matlis duality, so it suffices to verify that $\Soc M$ is finitely generated.

Set $L\coloneqq \Ker(M\xra{r} M)$ and consider the exact sequence of $R$-modules
\[
0\lra L\lra M \xrightarrow{\ r\ } M\,.
\]
Applying $\Hom^*_R(R/\fm,-)$ to it yields the exact sequence 
\[
0\lra \Soc L\lra \Soc M \xrightarrow{\ r\ } \Soc M
\]
and the right hand map is zero. Thus $\Soc M$ is isomorphic to $\Soc L$ and therefore finitely
generated, because $L$ is artinian.
\end{proof}

\subsection*{Minimal primes}
Let $\supp_R\cat T$ denote the \emph{support} of $\cat T$, the set of points $\fp$ in $\Spec R$ such that $\gam_\fp X\ne 0$ for some $X\in \cat T$.  
The result below augments Proposition~\ref{pr:artinian}.

\begin{lemma}
\label{le:minimal}
Suppose $\cat T$ is noetherian and fix $\fp$  in $\supp_R\cat T$. Then $\fp$ is minimal in $\supp_R{\cat T}$ if and only if in  $\cat T_\fp$  artinian and finite length objects over $R_\fp$ coincide.
\end{lemma}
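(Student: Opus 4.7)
By Proposition~\ref{pr:artinian}, $X \in \cat T_\fp$ is artinian over $R_\fp$ if and only if $X \in \gam_\fp\cat T$ and $X$ has finite length over $R_\fp$ in the sense of the compact objects of $\gam_\fp\cat T$. The lemma asks for compatibility with the finer notion of finite length in $\cat T_\fp$ using compacts of $\cat T_\fp$; for objects of $\gam_\fp\cat T$, the two notions coincide exactly when $\cat T_\fp = \gam_\fp\cat T$. So the plan is to show that $\fp$ is minimal in $\supp_R\cat T$ iff $\cat T_\fp = \gam_\fp\cat T$, and to exhibit, in the non-minimal case, an explicit object of $\cat T_\fp$ that is artinian but not of finite length.

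For direction $(\Rightarrow)$, assume $\fp$ is minimal in $\supp_R\cat T$; I establish $\cat T_\fp = \gam_\fp\cat T$. Given $X \in \cat T_\fp$, the localisation triangle $\gam_{V(\fp)}X \to X \to \loc_{V(\fp)}X \to$ exhibits $\loc_{V(\fp)}X$ as an $\fp$-local object containing no $V(\fp)$-torsion, so its support lies in $\{\fq : \fq \subsetneq \fp\}$. Since also $\supp_R \loc_{V(\fp)}X \subseteq \supp_R\cat T$, and by minimality this intersection is empty, the noetherian hypothesis forces $\loc_{V(\fp)}X = 0$, so $X \in \gam_\fp\cat T$. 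Proposition~\ref{pr:artinian} then immediately yields that artinian and finite length objects coincide in $\cat T_\fp = \gam_\fp\cat T$.

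For direction $(\Leftarrow)$, assume $\fp$ is not minimal and pick $\fq \subsetneq \fp$ in $\supp_R\cat T$. Under noetherianness, $\supp_R\cat T = V(J)$ where $J$ is the annihilator of the $R$-action on the compact objects of $\cat T$; non-minimality forces $(R/J)_\fp$ to have positive Krull dimension, so some $r \in \fp$ acts non-nilpotently on $\End^*_\cat T(C)$ for a compact $C$ of $\cat T_\fp$. Set $X \coloneqq \operatorname*{hocolim}_n \kos{C}{r^n}$. Each $\kos{C}{r^n}$ is compact in $\gam_\fp\cat T$ (being $r$-torsion with $r\in\fp$), so $X \in \gam_\fp\cat T$, and a telescope-type analysis using the noetherianness of $\End^*_\cat T(C)$ together with Proposition~\ref{pr:artinian} on each term shows that $X$ is artinian in $\cat T_\fp$. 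On the other hand, from the defining triangles for Koszul objects one computes
\[
\Hom^*_\cat T(C, X) \cong \operatorname*{colim}_n \Hom^*_\cat T(C, \kos{C}{r^n})\,,
\]
which is an $r$-divisible, $r$-power-torsion $R_\fp$-module built from $\End^*_\cat T(C)$ (of the form $\End^*_\cat T(C)[r^{-1}]/\End^*_\cat T(C)$ together with a shift of the $r$-power torsion). Because $r$ is not nilpotent on $\End^*_\cat T(C)$, this colimit does not stabilise and has infinite length over $R_\fp$, so $X$ is not of finite length in $\cat T_\fp$, establishing the contrapositive. The main obstacle is precisely this final computation: verifying that the homotopy colimit produces an object which is still artinian (this depends on the noetherian hypothesis to ensure the ascending chains $\End^*_\cat T(C)[r^n]$ stabilise) while its $\Hom^*_\cat T(C, -)$-invariant blows up in length, isolating the dimensional obstruction coming from $(R/J)_\fp$ not being artinian.
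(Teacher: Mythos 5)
Your forward direction is correct and matches the paper's: both reduce to the observation that $\fp$ is minimal in $\supp_R\cat T$ if and only if $\gam_\fp\cat T = \cat T_\fp$, then apply Proposition~\ref{pr:artinian}. (One small quibble: you attribute the vanishing of $\loc_{V(\fp)}X$ to ``the noetherian hypothesis'', but what is actually needed is detection of vanishing by support, not noetherianness.)

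The converse direction contains a genuine error. You set $X = \operatorname*{hocolim}_n \kos{C}{r^n}$ for a single $r\in\fp$ and assert that each $\kos{C}{r^n}$ is compact in $\gam_\fp\cat T$ ``being $r$-torsion with $r\in\fp$'', hence that $X\in\gam_\fp\cat T$ and is artinian. This is not so: killing powers of a single $r\in\fp$ produces $V(r)$-torsion objects, not $V(\fp)$-torsion objects, and $V(\fp)\subsetneq V(r)$ whenever $\fp$ is not minimal over $(r)$ — which is exactly the non-minimal regime you are working in. So $\kos{C}{r^n}$ need not lie in $\gam_\fp\cat T$, and $X$ need not be artinian; indeed Proposition~\ref{pr:artinian} shows that an artinian object of $\cat T_\fp$ must lie in $\gam_\fp\cat T$, so your candidate typically fails to be artinian at all. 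Concretely, take $\cat T=\dcat{\Mod R}$ with $R=k[x,y]$, $\fp=(x,y)$, $C=R_\fp$, $r=x$. Then $\Hom^*_\cat T(R_\fp,X)$ is a shift of $R_\fp[x^{-1}]/R_\fp$, and the submodules generated by $y^m/x$ for $m=0,1,2,\ldots$ form a strictly descending chain, so this module is not artinian. Your putative witness is therefore neither in $\gam_\fp\cat T$ nor artinian, and the contrapositive is not established. (Replacing $r$ by a generating set of $\fp$ would at least put the terms in $\gam_\fp\cat T$, but the resulting hocolim is $\gam_\fp C$, which is dualisable and hence still not the needed counterexample by itself.)

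The paper's witness is the Matlis lift $T_\fp C$ from~\eqref{eq:inj-coh}, for $C$ a compact object of $\cat T_\fp$. The crucial feature your Koszul telescope cannot replicate is that Matlis duality swaps noetherian and artinian: the isomorphism $\Hom^*_{\cat T}(C,T_\fp C)\cong\Hom^*_R(\End^*_\cat T(C),I(\fp))$, together with noetherianness of $\cat T$, makes $T_\fp C$ artinian unconditionally. If $T_\fp C$ were also of finite length in $\cat T_\fp$, then $\Hom^*_{\cat T}(C,T_\fp C)$ would be of finite length, hence so would $\End^*_\cat T(C)$ by Matlis duality; but a noetherian module of finite length over $R_\fp$ is $\fp$-torsion, forcing $C\in\gam_\fp\cat T$. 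Running this over all compacts of $\cat T_\fp$ gives $\cat T_\fp=\gam_\fp\cat T$ and hence $\fp$ minimal. This is the argument your converse needs.
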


\begin{proof}
First observe that $\fp\in\supp_R\cat T$ is minimal if and only if $\gam_\fp\cat T=\cat T_\fp$. From this it follows with Proposition~\ref{pr:artinian} that artinian and finite length coincide for the $R_\fp$-module $\Hom_{\cat T}^*(C,X)$ when $\fp$ is minimal.

As to the converse, we use the object $T_\fp C$ in $\gam_\fp\cat T$ which is defined below via \eqref{eq:inj-coh}. In particular, we have an isomorphism of $R$-modules
\[
\Hom^*_R(\End^*_{\cat T}(C),I(\fp)) \cong \Hom^*_{\cat T}(C,T_\fp C)\,.
\]
Since $\End^*_{\cat T}(C)$ is noetherian, $\Hom^*_{\cat T}(C,T_{\fp}C)$ is artinian. Moreover one of them has finite length if and only if the other does, by Matlis duality. This means that any compact object $C$ in $\cat T_\fp$ belongs to $\gam_\fp\cat T$. Thus $\gam_\fp\cat T=\cat T_\fp$, and therefore $\fp$ is minimal by our first observation.
\end{proof} 

\subsection*{Pure-injectivity}
Artinian objects enjoy  further finiteness properties. To explain this we recall the notion of purity for compactly generated triangulated categories; see \cite[\S1]{Krause:2000a}. A triangle $X\to Y\to Z\to$ is \emph{pure-exact} if for each compact object $C$ the induced sequence
\[
0\to\Hom_\cat T(C,X)\to\Hom_\cat T(C,Y)\to\Hom_\cat T(C,Z)\to 0
\] 
is exact. In that case $X\to Y$ is a \emph{pure monomorphism}. One calls an object $X$  \emph{pure-injective} if each pure monomorphism $X\to Y$ splits, and $X$ is \emph{$\Si$-pure-injective} if every coproduct of copies of $X$ is pure-injective.

\begin{proposition}
\label{pr:pure-inj}
Let $\cat T$ be a compactly generated $R$-linear triangulated category and $X\in \cat 
T$ artinian over $R$. Then $X$ is $\Si$-pure-injective and decomposes into a coproduct of indecomposable objects with local endomorphism rings.
\end{proposition}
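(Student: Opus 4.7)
The plan is to establish $\Sigma$-pure-injectivity first, by verifying a descending chain condition on suitable subgroups of $\Hom_{\cat T}(C,X)$, and then invoke the general Krull--Remak--Schmidt theorem for $\Sigma$-pure-injective objects in compactly generated triangulated categories, due to Krause \cite{Krause:2000a} and patterned on the classical results of Gruson--Jensen and Zimmermann for modules.

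For the first step, I would appeal to the criterion in \cite{Krause:2000a}: an object $X$ in a compactly generated triangulated category is $\Sigma$-pure-injective precisely when, for every compact $C$, the abelian group $\Hom_{\cat T}(C,X)$ satisfies the descending chain condition on pp-definable subgroups, that is, on subgroups of the form
\[
\operatorname{Im}\bigl(\Hom_{\cat T}(\alpha, X)\colon \Hom_{\cat T}(C',X)\to\Hom_{\cat T}(C,X)\bigr)
\]
for morphisms $\alpha\colon C\to C'$ in $\comp{\cat T}$. The crucial observation is that every such pp-subgroup is closed under post-composition by elements of $\End_{\cat T}(X)$: if $f=\tilde f\circ \alpha$ with $\tilde f\colon C'\to X$ and $g\in\End_{\cat T}(X)$, then $g\circ f=(g\circ\tilde f)\circ \alpha$ still factors through $\alpha$. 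Via the central homomorphism $\phi_X\colon R\to\End^*_{\cat T}(X)$, this makes every pp-subgroup of $\Hom_{\cat T}(C,X)$ an $R^0$-submodule.

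Now the hypothesis that $X$ is artinian over $R$ says that $\Hom^*_{\cat T}(C,X)$ is an artinian graded $R$-module, and Remark~\ref{re:degree-zero} upgrades this to $\Hom_{\cat T}(C,X)$ being artinian as an $R^0$-module. Hence DCC on pp-subgroups is automatic and $X$ is $\Sigma$-pure-injective; the decomposition into indecomposables with local endomorphism rings then follows from the general theorem. The main obstacle is isolating the right form of Krause's DCC criterion and verifying that the $R$-action preserves pp-subgroups: centrality is indispensable here, but once this is in place the argument is a direct extraction from the machinery of \cite{Krause:2000a} together with the earlier remarks.
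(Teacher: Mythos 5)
Your proposal is correct and follows essentially the same route as the paper: invoke Krause's DCC criterion on subgroups of finite definition and reduce to the artinian hypothesis via Remark~\ref{re:degree-zero}. You make welcome explicit a step the paper leaves implicit (that such subgroups are stable under post-composition by $\phi_X(R^0)$, hence are $R^0$-submodules); note only that the precise reference for the criterion is \cite[Theorem~4.10]{Krause/Reichenbach:2001a} rather than \cite{Krause:2000a}.
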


\begin{proof}
Let $C\in\cat T$ be compact. A subgroup of $\Hom_\cat T(C,X)$ is said to be \emph{of finite definition} if it is the image of a map $\Hom_\cat T(C',X)\to \Hom_\cat T(C,X)$ that is induced by a morphism $C\to C'$ between compact objects. 
In \cite[Theorem~4.10]{Krause/Reichenbach:2001a} it is proved that when the descending chain condition on subgroups of finite definition holds for all compact $C$ in $\cat T$, the object $X$ is $\Si$-pure-injective and admits a decomposition into indecomposable objects with local endomorphism rings.  From this the assertion about artinian objects follows, using Remark~\ref{re:degree-zero}.
   \end{proof}

\begin{corollary}
Suppose that $\cat T$ admits a compact generator. Then the category of objects in $\cat T$ that are artinian over $R$ is a Krull--Schmidt category.
\end{corollary}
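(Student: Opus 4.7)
The plan is to use the compact generator of $\cat T$ to upgrade the (potentially infinite) indecomposable decomposition provided by Proposition~\ref{pr:pure-inj} into a finite one, which is what the Krull--Schmidt property demands. Let $G$ be a compact generator of $\cat T$ and let $X$ be an object of $\cat T$ that is artinian over $R$. Proposition~\ref{pr:pure-inj} supplies a decomposition $X\cong\coprod_{i\in I}X_i$ where each $X_i$ is a nonzero indecomposable with local endomorphism ring.

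The first step is to apply the functor $\Hom^*_{\cat T}(G,-)$ to this decomposition. Since $G$ is compact this functor commutes with coproducts, so
\[
\Hom^*_{\cat T}(G,X)\cong\bigoplus_{i\in I}\Hom^*_{\cat T}(G,X_i)\,.
\]
Because $G$ is a generator and each $X_i\neq 0$, every summand on the right is nonzero. The $R$-module on the left is artinian by hypothesis, whereas an infinite direct sum of nonzero modules admits a strictly descending chain of submodules (take the tails along any countably infinite subset of the index set). Hence $I$ must be finite.

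It follows that every artinian object is a finite coproduct of indecomposables with local endomorphism rings; uniqueness of such a decomposition is automatic by Azumaya's theorem, so the category of artinian objects in $\cat T$ is Krull--Schmidt. I do not anticipate a serious obstacle: the substantive content is already packaged in Proposition~\ref{pr:pure-inj}, and the role of the compact generator here is just to transfer the artinian condition on $\Hom^*_{\cat T}(G,X)$ into a finiteness constraint on the indexing set of the decomposition. The only point that warrants a line of justification is that a nonzero object of $\cat T$ receives a nonzero graded morphism from $G$, which is precisely what it means for $G$ to be a generator in the compactly generated setting.
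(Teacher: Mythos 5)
Your argument is correct and is essentially the one the paper gives: apply $\Hom^*_{\cat T}(G,-)$ to the decomposition furnished by Proposition~\ref{pr:pure-inj} and deduce finiteness of the index set from the artinian hypothesis. You have merely expanded the paper's terse ``this is clear since the module is artinian'' into the explicit observations that each $\Hom^*_{\cat T}(G,X_i)$ is nonzero (because $G$ generates) and that an infinite direct sum of nonzero modules cannot be artinian.
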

\begin{proof}
 Fix a compact generator $C$. We have already seen that any artinian object has a decomposition $X=\coprod_{i\in I} X_i$ into indecomposables. We need to show that $I$ is finite, but this is clear from the isomorphism $\Hom^*_\cat T(C,X)\cong \coprod_{i\in I}\Hom^*_\cat T(C,X_i)$ since this module is artinian.
\end{proof}

\section{Brown--Comenetz duality}
\label{se:bc-duality}
In this section we explain how classical Matlis duality from commutative algebra, between artinian modules and noetherian modules over complete local rings lifts to the world of tensor triangulated categories, leading to Brown--Comenetz duality. 
The motivation for this is our interest in the category of finite length objects in $\gam_\fp\cat T$. Three other categories arise naturally in their study:  artinian objects in $\gam_\fp\cat T$, noetherian objects in $\lam^\fp\cat T$, and a certain category of Matlis lifts of compacts objects, introduced below. These categories are related via Brown--Comenetz duality; see for instance Corollary~\ref{co:bcd+dg}.

To begin with we record basic results on Matlis duality. Since this is a local construction,  throughout this section we fix a graded commutative  noetherian ring $R$ and $\fp$ in $\Spec R$. We write $I(\fp)$ for the injective hull of $R/\fp$ and, for any $R$-module $M$, write $M\phat$ for the completion of $M_\fp$ with respect to $\fp R_\fp$. We remind the reader that   these constructions take place in the world of graded rings and modules.

\subsection*{Matlis duality}
On the category of $R$-modules \emph{Matlis duality at $\fp$} is the functor 
\[
M\longmapsto \bcd M\coloneqq\Hom_R^*(M,I(\fp))\,.
\] 
There is a canonical biduality map $M\to\bcd^2 M$; it is a monomorphism and we say $M$ is \emph{Matlis reflexive} when it is an isomorphism. For $M=R$ the map induces an isomorphism $R\phat\iso \End_R^*(I(\fp))$; this is part of Matlis duality which is given below. Therefore the Matlis dual $\bcd M$ admits a canonical $R\phat$-action. 

For any pair of $R$-modules $M,N$ there are natural isomorphisms of $R\phat$-modules 
\[
\Hom^*_R(M,\bcd N)\cong\Hom^*_R(M\otimes_R N,I(\fp))\cong\Hom^*_R(N,\bcd M)\,.
\]

Let $\art R$ and $\noeth R$ denote the categories of $R$-modules that are artinian and noetherian, respectively.
The result below is due to Matlis~\cite[\S4]{Matlis:1958a}.

\begin{proposition}
\label{pr:matlis}
    Let $R$ be a   graded commutative noetherian ring and $\fp\in\Spec R$. 
    Then the following  functors are mutually quasi-inverse contravariant equivalences:
\[
  \begin{tikzcd}[column sep = huge]
  \art R_\fp=\art R\phat
  \arrow[yshift=.75ex]{rrr}{\Hom^*_{R\phat}(-,I(\fp))=\Hom^*_{R}(-,I(\fp))}
  &&&\noeth R\phat
  \arrow[yshift=-.75ex]{lll}{\Hom^*_{R\phat}(-,I(\fp))}
  \end{tikzcd}
\]
Moreover, for a finitely generated $R$-module $M$ the canonical map $M\to \bcd^2 M$ induces an isomorphism $M\phat\iso \bcd^2 M$.\qedhere 
\end{proposition}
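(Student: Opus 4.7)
The plan is to follow Matlis's original argument, adapting it to the graded setting, by first reducing to the case of a complete (graded) local ring and then performing a standard dévissage. First I would explain why the Matlis dual $\bcd M = \Hom^*_R(M,I(\fp))$ depends only on information at $\fp$: for any $R$-module $M$, the morphism $\bcd M \cong \Hom^*_{R_\fp}(M_\fp,I(\fp))$ since $I(\fp)$ is $\fp$-local, and moreover each element of $\bcd M$ is annihilated by a power of $\fp R_\fp$, so $\bcd M$ is naturally a module over $R\phat$. Likewise, any artinian $R_\fp$-module $M$ satisfies $M = \bigcup_n (0:_M\fp^n)$ with each $(0:_M\fp^n)$ of finite length, and finite-length $R_\fp$-modules are the same as finite-length $R\phat$-modules, giving the identification $\art R_\fp = \art R\phat$.

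With that in hand, the proof reduces to the case where $R$ is a graded local ring complete at its unique maximal homogeneous ideal $\fm=\fp$; write $E=I(\fm)$. The key computation is that $\End^*_R(E)\cong R$, which in the complete case is the heart of Matlis's theorem and is proved by first establishing $\End^*_R(E)\cong \varprojlim_n \Hom^*_R(E_n, E)$ where $E_n=(0:_E \fm^n)$ is a finite-length self-dual module, and then identifying this inverse limit with $R\phat = R$. Granted this, one immediately obtains $\bcd E\cong R$ and hence biduality $R\iso \bcd^2 R$ from the isomorphism $\bcd R=\Hom^*_R(R,E)=E$.

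Next I would extend biduality from $R$ to all finitely generated, and then to all artinian, $R$-modules by the standard five-lemma dévissage using exactness of $\bcd$ (which holds because $E$ is injective): the class of modules on which the biduality map is an isomorphism is closed under finite direct sums, kernels, and cokernels, hence includes every finitely presented, therefore every finitely generated, module. Combined with the isomorphism $\End^*_R(E)\cong R$, presenting any noetherian module as $R^m\to R^n\to N\to 0$ shows $\bcd N$ embeds in $E^n$, hence is artinian; dually, any artinian $M$ embeds into some $E^n$ (its socle is finite-dimensional since it is annihilated by $\fm$, and $E$ is the injective hull of $R/\fm$), so $\bcd M$ is a quotient of $R^n$ and hence noetherian. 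A second round of five-lemma dévissage, now applied to the short exact sequences coming from such embeddings, promotes biduality from $E$ (where it holds by the computation $\bcd^2 E=\bcd R=E$) to all artinian modules. For the final assertion, for finitely generated $M$ the biduality map $M\to\bcd^2 M$ lands in an $R\phat$-module and factors canonically through $M\phat$; by the complete-local case applied to $M\phat$ (noting $\bcd M=\bcd(M\phat)$ by the reduction above) the induced map $M\phat\to\bcd^2 M$ is an isomorphism.

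The main obstacle I anticipate is the isomorphism $\End^*_R(E)\cong R\phat$ in the graded setting: one must check that graded homogeneous endomorphisms of $E$ of every degree assemble into $R\phat$ (rather than some larger or smaller object) and that completion is taken with respect to the homogeneous maximal ideal. Everything else is bookkeeping with exact sequences and the five lemma, but this computation is where the graded hypotheses on $R$ genuinely enter, and it is what makes the rest of the argument uniform.
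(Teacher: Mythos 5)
The paper gives no proof of this proposition; it simply cites \S4 of Matlis's 1958 paper (the \texttt{\textbackslash qedhere} after the statement indicates the citation \emph{is} the proof). So there is no argument in the paper against which to compare yours: you have reconstructed what the authors deliberately left out. Your reconstruction is the standard Matlis argument, correctly organised, and it does carry over to the graded setting with the paper's conventions (homogeneous localisation and graded $\fp$-adic completion).

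Two small slips worth noting. First, $E_n=(0:_E\fm^n)$ is not self-dual; it is the Matlis dual of $R/\fm^n$. The isomorphism actually used in the inverse-limit computation is $\Hom^*_R(E_n,E)\cong\bcd^2(R/\fm^n)\cong R/\fm^n$, which requires biduality for finite-length modules as a preliminary step (this is harmless but should come before the computation of $\End^*_R(E)$, not be absorbed into it). Second, in the complete-local reduction you write ``one immediately obtains $\bcd E\cong R$''; what is obtained is $\bcd E=\End^*_R(E)\cong R\phat$, and it is the completeness assumption that lets you identify this with $R$ — worth saying explicitly since that identification is exactly the hinge of the reduction. You correctly flag $\End^*_R(E)\cong R\phat$ in the graded setting as the one place where something nontrivial must be verified; the rest is exactness of $\bcd$, dévissage, and the observation that $\bcd M\cong\Hom^*_{R\phat}(M\phat,I(\fp))$ for $M$ finitely generated, which you use implicitly in the final step and which is worth recording as a one-line lemma.
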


These constructions and results carry over to the world of triangulated categories. 

\subsection*{Matlis lifts}
Let $\cat T$ be a compactly generated $R$-linear triangulated category; it need not be tensor triangulated.  For each object $C$ in $\comp{\cat T}$ Brown representability yields an object $T_{\fp}C$ in ${\cat T}$ such that
\begin{equation}
\label{eq:inj-coh} 
\Hom_R(\Hom^*_{\cat T}(C,-), I(\fp))\cong\Hom_{\cat T}(-,T_{\fp}C)\,.
\end{equation} 
One can think of $T_{\fp} C$ as a \emph{Matlis lift} of $I(\fp)$ to $\cat T$, with respect to $C$. The assignment $C\mapsto T_{\fp}C$ yields a functor $\comp{\cat T}\to {\cat T}$, and it follows from Lemma~\ref{le:local-noetherian} that $T_\fp$ admits a canonical factorisation 
\[
\comp{\cat T}\twoheadrightarrow\comp{\cat T}_\fp\to\cat T_\fp\rightarrowtail\cat T\,.
\]
Matlis duality implies that when $\cat T$ is noetherian, $T_\fp C$ is an artinian object in the $R_\fp$-linear triangulated category $\cat T_\fp$ and belongs therefore to $\gam_\fp\cat T$; see also Lemma~\ref{le:t-artinian}. For tensor triangulated categories, one can lift Matlis duality to the level of objects, and not just on their cohomology. This is explained below.

\subsection*{Local Brown--Comenetz duality}
Let $\cat T$ be a rigidly compactly generated tensor triangulated category. We fix a   graded commutative noetherian ring $R$ and a canonical $R$-action on $\cat T$.

For an object $X\in\cat T$ Brown representability yields an object $\bcd{X}$ in ${\cat T}$ such that
\[
\Hom_R(\Hom^*_{\cat T}(\one,X\otimes -),I(\fp))\cong\Hom_{\cat T}(-,\bcd X).
\]
This yields a contravariant functor  $\bcd \colon{\cat T}\to{\cat T}$ called \emph{local Brown--Comenetz duality}. The same notation $\bcd$ is used for modules and for objects in a triangulated category, but there is no danger of confusion.

In the following we collect some basic properties which will be used throughout, sometimes without any further reference.

\begin{lemma}
\label{le:BC}
For $C\in \comp{\cat T}$ and $X\in \cat T$ there are natural isomorphism of $R$-modules
\[
  \Hom_{\cat T}^*(C,\bcd X) \cong \bcd\Hom^*_{\cat T}(\swd C,X)\,.
\]
\end{lemma}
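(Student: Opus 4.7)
The plan is to unwind the definitions and use rigidity together with the tensor-hom adjunction; no serious obstacle arises, only bookkeeping of the grading and naturality.

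First I would substitute shifts of the compact object $C$ for the variable in the defining isomorphism of $\bcd X$ from Brown representability. Since $C$ is compact, $\Hom^*_{\cat T}(\one, X \otimes -)$ commutes with the graded assembly of morphism groups, and so this directly gives a natural isomorphism of graded $R$-modules
\[
\Hom_{\cat T}^*(C,\bcd X) \;\cong\; \Hom_R^*\!\bigl(\Hom^*_{\cat T}(\one, X\otimes C),\,I(\fp)\bigr) \;=\; \bcd\Hom^*_{\cat T}(\one,X\otimes C).
\]

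Next I would identify $\Hom^*_{\cat T}(\one, X\otimes C)$ with $\Hom^*_{\cat T}(\swd C, X)$. Since $\cat T$ is rigidly compactly generated, the compact object $C$ is dualisable, so by Proposition~\ref{pr:dualisable} the natural map $C \to \swd\swd C$ is an isomorphism and the defining isomorphism of dualisability gives
\[
X \otimes C \;\cong\; \swd(\swd C)\otimes X \;\cong\; \fHom(\swd C, X).
\]
Applying $\Hom^*_{\cat T}(\one,-)$ and using the tensor-hom adjunction together with the unit isomorphism $\one \otimes \swd C \cong \swd C$, I obtain
\[
\Hom^*_{\cat T}(\one, X\otimes C) \;\cong\; \Hom^*_{\cat T}(\one, \fHom(\swd C, X)) \;\cong\; \Hom^*_{\cat T}(\swd C, X).
\]

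Combining these two chains of natural isomorphisms yields the desired identification
\[
\Hom_{\cat T}^*(C,\bcd X) \;\cong\; \bcd\Hom^*_{\cat T}(\swd C, X).
\]
The only point that requires care is checking that the $R$-action on both sides matches, which follows because the $R$-action on $\bcd X$ comes by transport from the $R$-action on $\Hom^*_{\cat T}(\one, X\otimes -)$, and the displayed isomorphisms are built from the tensor-hom adjunction and the evaluation and coevaluation maps for the dualisable object $C$, all of which are $R$-linear via the canonical action on $\one$.
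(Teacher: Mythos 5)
Your proof is correct and fleshes out exactly the argument the paper leaves implicit ("immediate from the defining isomorphism of $\bcd X$, using the dualisability of $C$"): you plug shifts of $C$ into the representability isomorphism defining $\bcd X$, then use $\swd\swd C\cong C$ and the identification $\fHom(\swd C,X)\cong C\otimes X$ for dualisable $C$ to rewrite $\Hom^*_{\cat T}(\one,X\otimes C)$ as $\Hom^*_{\cat T}(\swd C,X)$. No difference in approach, just more detail spelled out.
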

\begin{proof}
The isomorphism is immediate from the defining isomorphism of $\bcd X$, using the dualisablity of $C$.
\end{proof}

For any pair of objects $X,Y$ in $\cat T$ there is a natural isomorphism of $R$-modules
\begin{equation*}
\label{eq:bcd}
\Hom^*_\cat T(X,\bcd Y)\cong \Hom^*_\cat T(Y,\bcd X)    
\end{equation*}
which means that $\bcd$ yields an adjoint pair of functors $\cat T^{\op} \rightleftarrows\cat T$. Also, it is easy to verify that there is a natural isomorphism
\[
\bcd X \cong\fHom(X,\bcd\one)\,.
\]
Therefore $\bcd$ restricts to a functor $\gam_\fp\cat T\to\lam^\fp\cat T$, thanks to \eqref{eq:loc-hom}. For any compact (equivalently, dualisable) object $C$, one has that
\[
\bcd C \cong  \swd C\otimes \bcd\one\,.
\]

The following observation connects Matlis lifts with Brown--Comenetz duality.

\begin{lemma}
\label{le:Tp-tensor}
 For any compact object $C$ in $\cat T$ there are natural  isomorphisms
\[
T_\fp C\cong C\otimes T_\fp\one\cong \bcd(\swd C)\,.
\]
\end{lemma}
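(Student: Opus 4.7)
The plan is to unwind the defining Brown representability isomorphisms and exploit rigidity of $C$. The cleanest first step is to observe that $T_\fp\one$ and $\bcd\one$ represent the same functor: both definitions specialise, at $X=\one$ respectively at $C=\one$, to
\[
\Hom_{\cat T}(-,T_\fp\one)\cong\Hom_R(\Hom^*_{\cat T}(\one,-),I(\fp))\cong\Hom_{\cat T}(-,\bcd\one).
\]
So Yoneda gives a natural isomorphism $T_\fp\one\cong\bcd\one$ that I will use freely.

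For the first isomorphism $T_\fp C\cong C\otimes T_\fp\one$, the plan is to apply Yoneda after the chain
\begin{align*}
\Hom_{\cat T}(X,C\otimes T_\fp\one)
&\cong\Hom_{\cat T}(X,\fHom(\swd C,T_\fp\one))\\
&\cong\Hom_{\cat T}(X\otimes\swd C,T_\fp\one)\\
&\cong\Hom_R(\Hom^*_{\cat T}(\one,X\otimes\swd C),I(\fp))\\
&\cong\Hom_R(\Hom^*_{\cat T}(C,X),I(\fp))\\
&\cong\Hom_{\cat T}(X,T_\fp C),
\end{align*}
valid for all $X\in\cat T$. The first line uses that $C$ is dualisable (so $C\otimes -\cong\fHom(\swd C,-)$ by definition of dualisability together with $\swd\swd C\cong C$ from Proposition~\ref{pr:dualisable}), the second line is the tensor--hom adjunction, the third invokes the defining property of $T_\fp\one$, the fourth uses $\Hom^*_{\cat T}(\one,X\otimes\swd C)\cong\Hom^*_{\cat T}(\one,\fHom(C,X))\cong\Hom^*_{\cat T}(C,X)$, and the last line is again the definition of $T_\fp C$.

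For the second isomorphism $C\otimes T_\fp\one\cong\bcd(\swd C)$, the plan is to apply the formula $\bcd(-)\cong\swd(-)\otimes\bcd\one$ valid on compact objects (which was recorded just before the lemma) to the compact object $\swd C$, so that
\[
\bcd(\swd C)\cong\swd(\swd C)\otimes\bcd\one\cong C\otimes\bcd\one\cong C\otimes T_\fp\one,
\]
using Proposition~\ref{pr:dualisable} for the middle step and $T_\fp\one\cong\bcd\one$ for the last. There is no real obstacle here; the only point requiring care is to check that all the natural isomorphisms above are genuinely natural in $X$, so that Yoneda applies, and that the use of $\swd\swd C\cong C$ is legitimate, which follows from rigidity of $\cat T$.
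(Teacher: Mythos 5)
Your proof is correct and follows essentially the same route as the paper: identify $T_\fp\one\cong\bcd\one$ from the defining representability isomorphisms, prove $T_\fp C\cong C\otimes T_\fp\one$ by running the same chain of adjunctions (yours goes from right to left where the paper goes left to right, with one extra explicit step through $\fHom(\swd C,T_\fp\one)$), and deduce $C\otimes T_\fp\one\cong\bcd(\swd C)$ from the displayed formula $\bcd C\cong\swd C\otimes\bcd\one$ for compact $C$ together with $\swd\swd C\cong C$. The reasoning is sound throughout.
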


\begin{proof}
Since $T_\fp\one \cong \bcd \one$, from definitions, one gets the isomorphism on the right. The one on the left holds because for each  $X\in\cat T$ there are natural isomorphisms:
\begin{align*}
\Hom_\cat T(X,T_\fp C)&\cong \Hom_R(\Hom^*_\cat T(C,X),I(\fp))\\   
&\cong \Hom_R(\Hom^*_\cat T(\one,\swd C\otimes X),I(\fp))\\ 
&\cong \Hom_\cat T(\swd C\otimes X,T_\fp \one)\\
&\cong \Hom_\cat T(X,C\otimes T_\fp\one)\,.
\end{align*}
The second and last isomorphisms hold because $C$ is dualisable.
\end{proof}

\subsection*{Completed linear actions}

We wish to establish a Matlis duality theorem  for $\cat T$ and need to assume that $\cat T$ is noetherian over $R$. Recall that this means compact objects are noetherian: the $R$-module $\Hom^*_{\cat T}(C,D)$ is finitely generated for all $C,D\in \comp{\cat T}$.  The first step is to show that the $\fp$-torsion category and the $\fp$-complete category admit $R\phat$-linear actions that extend the $R$-linear actions.

\begin{lemma}\label{le:DD-compact}
Let $\cat T$ be noetherian.
For a compact object $X\in\cat T_\fp$ the canonical morphism $X\to\bcd^2 X$ induces an isomorphism $\lam^\fp X\iso\bcd^2 X$.
\end{lemma}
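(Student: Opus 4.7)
The plan is to factor the biduality map $X\to \bcd^2X$ as $X\to \lam^\fp X\xra{\alpha} \bcd^2 X$, and then check $\alpha$ is an isomorphism by evaluating $\Hom^*_\cat T(D,-)$ on a compactly generating class $D$ of $\lam^\fp\cat T$.

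\emph{Producing $\alpha$.} Since $X$ is compact in $\cat T_\fp$, Lemma~\ref{le:gam-loc-V} realises it as a summand of $C'_\fp$ for some $C'\in\comp{\cat T}$, and noetherianness then makes $\Hom^*_\cat T(C,X)$ finitely generated over $R_\fp$ for every $C\in\comp{\cat T}$. Lemma~\ref{le:BC} gives
\[
\Hom^*_\cat T(C,\bcd X)\cong \bcd\Hom^*_\cat T(\swd C,X),
\]
which is artinian as the Matlis dual of a finitely generated $R_\fp$-module, hence $\fp$-torsion. Thus $\bcd X\in \gam_\fp\cat T$, and since $\bcd$ sends $\gam_\fp\cat T$ into $\lam^\fp\cat T$ (the observation recorded after~\eqref{eq:loc-hom}) we get $\bcd^2 X\in \lam^\fp\cat T$. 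The factoring and the map $\alpha$ follow from the universal property of the unit $X\to \lam^\fp X$; more precisely, the adjunction $(\gam_\fp,\lam^\fp)$ from Proposition~\ref{pr:tor-complete} combined with $\gam_\fp\lam^\fp X\cong \gam_\fp X$ on $\cat T_\fp$ yields the required uniqueness.

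\emph{Evaluating on compact generators.} Take $D$ in $\comp{(\lam^\fp\cat T)}=\comp{(\gam_\fp\cat T)}$ (Lemma~\ref{le:p-torsion}); this class compactly generates $\lam^\fp\cat T$ by Corollary~\ref{co:plocal-cat-lambda}. The module $\Hom^*_\cat T(D,X)$ has finite length over $R_\fp$: by the thick generation of this class by the Koszul objects $\kos{C_\fp}{\fp}$ (Lemma~\ref{le:p-torsion}) and by Lemma~\ref{le:koszul-tt}, the finiteness reduces to the finite length of the Koszul cohomology $H^*(\kos{\Hom^*_\cat T(C,X)}{\fp})$, which holds because $\Hom^*_\cat T(C,X)$ is finitely generated over the local ring $R_\fp$ with maximal ideal $\fp R_\fp$. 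On the right hand side, two applications of Lemma~\ref{le:BC} together with $\swd\swd D\cong D$ produce
\[
\Hom^*_\cat T(D,\bcd^2 X)\cong \bcd^2\Hom^*_\cat T(D,X)\cong \Hom^*_\cat T(D,X),
\]
the last isomorphism being Matlis reflexivity for finite length modules (Proposition~\ref{pr:matlis}). On the left, $\gam_\fp D\cong D$ together with the adjunction $(\gam_\fp,\lam^\fp)$ yield
\[
\Hom^*_\cat T(D,\lam^\fp X)\cong \Hom^*_\cat T(\gam_\fp D,X)\cong \Hom^*_\cat T(D,X).
\]

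\emph{Main obstacle.} What remains is a naturality check: tracing through Lemma~\ref{le:BC} and the Brown-representability construction of $\bcd$, one needs to verify that $\alpha_\ast$ corresponds under these identifications to the canonical biduality map $M\to \bcd^2 M$ on $M=\Hom^*_\cat T(D,X)$. This bookkeeping is the most delicate ingredient, but once it is in place Matlis reflexivity for finite length modules forces $\alpha_\ast$ to be an isomorphism for every test object $D$, and compact generation of $\lam^\fp\cat T$ concludes that $\alpha$ itself is an isomorphism.
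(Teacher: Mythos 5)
Your proof follows the same route as the paper's: show $\bcd X$ is $\fp$-torsion so that $\bcd^2 X$ is $\fp$-complete, factor the biduality map through $X\to\lam^\fp X$, and then test the induced map against compact objects of $\gam_\fp\cat T=\lam^\fp\cat T$, where Matlis reflexivity of the finite-length Hom-modules forces an isomorphism. The ``main obstacle'' you flag is in fact routine and not a genuine gap: the isomorphism in Lemma~\ref{le:BC} is a Yoneda-type identification coming directly from the Brown-representability definition of $\bcd$, hence natural in both variables, so it automatically carries the map $X\to\bcd^2 X$ to the module-level biduality $\Hom^*_\cat T(D,X)\to\bcd^2\Hom^*_\cat T(D,X)$; the paper handles this implicitly in exactly the same way, working directly with $D=\kos C\fp$ rather than an arbitrary compact object of $\gam_\fp\cat T$ (which, as you note, generates the same thick subcategory).
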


\begin{proof}
We may assume that $R$ is local with maximal ideal $\fp$. Let $C\in\comp{\cat T}$. Since $\cat T$ is noetherian over $R$, the $R$-module $\Hom^*_\cat T(\swd C,X)$ is finitely generated. Hence, the $R$-module  $\Hom^*_\cat T(C,\bcd X)\cong\bcd\Hom^*_\cat T(\swd C,X)$ is artinian. In particular $\bcd X$ is $\fp$-torsion, and hence $\bcd^2 X\cong \fHom(\bcd X,\bcd\one)$ is $\fp$-complete, by \eqref{eq:loc-hom}.

It follows that the canonical morphism $X\to\bcd^2 X$ factors through $X\to\lam^\fp X$.
For $\lam^\fp X\to\bcd^2 X$ to be an isomorphism, it suffices to show that the induced map
\[
\Hom^*_{\cat T}(\kos C{\fp}, \lam^\fp X)
    \longrightarrow \Hom^*_{\cat T}(\kos C{\fp},\bcd^2 X)
\]
is a bijection; see Lemma~\ref{le:p-torsion}. For the same reason the induced map
\[
\Hom^*_{\cat T}(\kos C{\fp}, X)
    \longrightarrow \Hom^*_{\cat T}(\kos C{\fp},\lam^\fp X)
\]
is a bijection. But $\Hom^*_{\cat T}(\kos C{\fp}, X)$ is of finite length over $R$, so it is Matlis reflexive.
This yields the first isomorphism below
\[
\Hom^*_{\cat T}(\kos C{\fp}, X)\iso\bcd^2 \Hom^*_{\cat T}(\kos C{\fp}, X)\iso\Hom^*_{\cat T}(\kos C{\fp},\bcd^2 X)
\]
and the second is from Lemma~\ref{le:BC}.
\end{proof}

\begin{lemma}
The tensor triangulated categories $\gam_\fp\cat T$ and $\lam^\fp\cat T$ have canonical $R\phat$-linear actions, extending the $R$-action inherited from $\cat T$.
\end{lemma}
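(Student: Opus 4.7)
The plan is to upgrade the $R$-action on each subcategory to an $R\phat$-action by first establishing such an action on the endomorphism ring of the tensor unit and then propagating it to arbitrary objects via the tensor structure.

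For $\gam_\fp\cat T$, whose tensor unit is $\gam_\fp\one$ by Corollary~\ref{co:plocal-cat}, the core of the argument is the identification $\End^*_\cat T(\gam_\fp\one)\cong\End^*_\cat T(\one)\phat$. To obtain it, the adjunction $(\gam_\fp,\lam^\fp)$ together with the isomorphism $\gam_\fp\lam^\fp\one_\fp\cong\gam_\fp\one$ from Proposition~\ref{pr:tor-complete} gives
\[
\End^*_\cat T(\gam_\fp\one)\;\cong\;\Hom^*_\cat T(\one_\fp,\lam^\fp\one_\fp)\;\cong\;\Hom^*_\cat T(\one,\lam^\fp\one_\fp).
\]
Since $\one$ is compact in $\cat T$ by rigidity, $\one_\fp$ is compact in $\cat T_\fp$, so Lemma~\ref{le:DD-compact} yields $\lam^\fp\one_\fp\cong\bcd^2\one$. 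Two applications of Lemma~\ref{le:BC} with $C=\one$ then give $\Hom^*_\cat T(\one,\bcd^2\one)\cong\bcd^2\End^*_\cat T(\one)$. The noetherian hypothesis on $\cat T$ makes $\End^*_\cat T(\one)$ finitely generated over $R$, and Matlis duality (Proposition~\ref{pr:matlis}) identifies $\bcd^2\End^*_\cat T(\one)$ with the $\fp$-adic completion $\End^*_\cat T(\one)\phat$, which is canonically an $R\phat$-algebra. Under these identifications the canonical ring map $R\to\End^*_\cat T(\gam_\fp\one)$ factors as $R\to R\phat\to\End^*_\cat T(\one)\phat$. The resulting $R\phat$-action on the tensor unit then extends to every object $X$ of $\gam_\fp\cat T$ by tensoring with $\id_X$ and using the canonical isomorphism $\gam_\fp\one\otimes X\cong X$; the centrality constraint follows from the naturality of the tensor product.

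For $\lam^\fp\cat T$, whose tensor unit is $\lam^\fp\one_\fp$ by Corollary~\ref{co:plocal-cat-lambda}, the triangle equivalence $\gam_\fp\colon\lam^\fp\cat T\iso\gam_\fp\cat T$ of Proposition~\ref{pr:tor-complete} sends this unit to $\gam_\fp\lam^\fp\one_\fp\cong\gam_\fp\one$, inducing a graded ring isomorphism $\End^*_\cat T(\lam^\fp\one_\fp)\cong\End^*_\cat T(\gam_\fp\one)$; the same $R\phat$-structure is then transported and similarly propagated via the tensor structure $\lam^\fp(-\otimes-)$. The main obstacle is to track the naturality of the chain of isomorphisms in the computation above, so as to conclude that the resulting $R\phat$-action genuinely extends the given $R$-action rather than some twist of it. All the relevant morphisms---the biduality map of Lemma~\ref{le:DD-compact}, the iterated Brown--Comenetz dual of Lemma~\ref{le:BC}, and the Matlis biduality map---are canonical natural transformations, so the compatibility holds, but booking it carefully through each identification is the most delicate part of the argument.
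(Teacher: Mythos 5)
Your proof is correct and relies on the same essential ingredients as the paper's: Lemma~\ref{le:DD-compact} to identify $\lam^\fp\one_\fp$ with $\bcd^2\one_\fp$, Matlis duality (Proposition~\ref{pr:matlis}), and the principle that an action on the tensor unit propagates to the whole tensor category. The main organisational difference is in how the factorisation $R\to R\phat\to\End^*_\cat T(\text{unit})$ is obtained. You compute the endomorphism ring of $\gam_\fp\one$ explicitly as $\End^*_\cat T(\one)\phat$ via a chain of \emph{isomorphisms} (adjunction, Lemma~\ref{le:BC} twice, Matlis biduality), and then must verify that the canonical $R$-action passes through these identifications to the map $R\to R\phat$; you rightly flag this naturality bookkeeping as the delicate step but leave it informal. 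The paper sidesteps this entirely: rather than computing the endomorphism ring, it exhibits for each $X$ a chain of ring \emph{homomorphisms}
\[
R\to R_\fp\to R\phat\iso\End_R^*(I(\fp))\to\End^*_{\cat T}(\bcd\one)\to\End^*_\cat T(\bcd X)
\]
whose composite is the canonical $R$-action by construction, so the factorisation through $R\phat$ is immediate. Then it specialises $X=\bcd\one_\fp$, uses $\lam^\fp\one_\fp\cong\bcd^2\one_\fp$ from Lemma~\ref{le:DD-compact}, and transports via the equivalence of \eqref{eq:DG}---the reverse direction of the transfer from yours. Your argument yields the sharper statement $\End^*_\cat T(\gam_\fp\one)\cong\End^*_\cat T(\one)\phat$ as a by-product, which is true and useful elsewhere, but buying it here comes at the cost of the compatibility check that the paper's more structural chain of ring maps makes unnecessary.
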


\begin{proof}
For each $X\in\cat T$  there is a sequence of homomorphisms 
\[
R\to R_\fp\to R\phat    \iso\End_R^*(I(\fp))\to\End^*_{\cat T}(\bcd\one)\to\End^*_\cat T(\bcd X)\,,
  \]  
where the isomorphism is from Matlis duality. This yields a factorisation of the canonical homomorphism $R\to\End^*_\cat T(\bcd X)$.
We have  $\lam^\fp\one_\fp \cong\bcd^2\one_\fp$ by Lemma~\ref{le:DD-compact}, and this gives the factorisation of $R\to\End^*_\cat T(\lam^\fp\one_\fp)$
by taking $X=\bcd\one_\fp$. This establishes the canonical $R\phat$-linear action for $\lam^\fp\cat T$, since $\lam^\fp\one_\fp$ is the tensor identity. The  equivalence $\lam^\fp\cat T\iso\gam_\fp\cat T$ from \eqref{eq:DG} preserves the tensor identity, since $\gam_\fp(\lam^\fp\one_\fp)\cong\gam_\fp\one$, and this yields the assertion for $\gam_\fp\cat T$.
\end{proof}

\subsection*{Duality over the completion}

In the following we restrict to $R$-modules and objects in $\cat T$ that admit an $R\phat$-action. Then we consider
the following modification of the Matlis duality:
\[
M\longmapsto D\phat M\coloneqq\Hom_{R\phat}^*(M,I(\fp))
\] 
Note that there is a natural morphism $D\phat M\to\bcd M$. This is an isomorphism when $M$ is $\fp$-torsion, because the class  of $R\phat$-modules $M$ such that this is an isomorphism includes the finite length modules and is closed under colimits, so it includes all $\fp$-torsion modules. There is also a canonical monomorphism $M\to (D\phat)^2 M$, which is an isomorphism when $M$ is artinian or noetherian. 

Analogously, for any object $X\in\cat T$ such that the canonical $R$-action on $\End^*_\cat T(X)$ factors through $R \to R\phat$, Brown representability yields an object $D\phat X$ in ${\cat T}$ with a natural isomorphism
\[
\Hom_{R\phat}(\Hom^*_{\cat T}(\one,X\otimes -),I(\fp))\cong\Hom_{\cat T}(-,D\phat X).
\]
The asssignment $X\mapsto D\phat X$ yields a pair of contravariant functors $\gam_\fp\cat T\to\cat T$ and $\lam^\fp\cat T\to\cat T$, equipped with natural transformations $D\phat \to\bcd$. 

\begin{lemma}
For  $X\in\gam_\fp\cat T$ the morphism $D\phat X\to\bcd X$ is an isomorphism. 
\end{lemma}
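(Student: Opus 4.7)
The plan is to argue by Yoneda: it suffices to check that for every $Y$ in $\cat T$ the induced map $\Hom_{\cat T}(Y, D\phat X) \to \Hom_{\cat T}(Y, \bcd X)$ is a bijection. Unwinding the Brown representability definitions of the two Matlis-type duals, this reduces to proving that the natural restriction map
\[
\Hom_{R\phat}(N, I(\fp)) \longrightarrow \Hom_R(N, I(\fp))
\]
is a bijection, where $N \coloneqq \Hom^*_{\cat T}(\one, X\otimes Y)$.

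The key intermediate step is to verify that $X\otimes Y$ lies in $\gam_\fp\cat T$ for every $Y\in\cat T$. Since $X$ is $V(\fp)$-torsion and $\fp$-local, the projection formulas
\[
\gam_{V(\fp)}(X\otimes Y) \cong \gam_{V(\fp)}\one \otimes X\otimes Y \cong X\otimes Y
\]
and similarly $L_{Z(\fp)}(X\otimes Y) \cong X\otimes Y$ confirm this. By the preceding lemma on the canonical $R\phat$-action on $\gam_\fp\cat T$, the $R$-action on $\End^*_{\cat T}(X\otimes Y)$ therefore factors through $R\phat$, so $N$ is canonically an $R\phat$-module. Moreover $N$ is $\fp$-torsion as an $R$-module, because $X\otimes Y$ is $V(\fp)$-torsion and $\one$ is compact.

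To finish, I would invoke the observation recorded immediately before the statement of the lemma: the natural map $D\phat N \to \bcd N$ is an isomorphism for every $\fp$-torsion $R\phat$-module $N$, since the class of $R\phat$-modules on which this holds contains the finite length modules and is closed under colimits. This gives bijectivity of the displayed map, and hence, by Yoneda, of $D\phat X \to \bcd X$. I do not anticipate a serious obstacle; the one substantive input is verifying $X\otimes Y\in\gam_\fp\cat T$, which is what upgrades the $R$-action on $N$ to the $R\phat$-action needed to compare the two Hom groups, after which the conclusion reduces to the already-established module-level statement.
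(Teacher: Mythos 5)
Your argument is correct and rests on the same key observation as the paper's, namely that the natural map $D\phat M\to\bcd M$ is an isomorphism for $\fp$-torsion $R\phat$-modules $M$. The implementation differs: the paper tests the morphism $D\phat X\to\bcd X$ against \emph{compact} objects $C$ and applies Lemma~\ref{le:BC} (together with its $D\phat$-analogue) to transport both sides to the module $\Hom^*_{\cat T}(\swd C,X)$, which is $\fp$-torsion immediately because $X\in\gam_\fp\cat T$ and $\swd C$ is compact. You instead test against \emph{all} $Y\in\cat T$ and unwind the Brown representability formulas, which forces you to verify that $X\otimes Y\in\gam_\fp\cat T$ (straightforward via the projection formula, as you do) in order to endow $N=\Hom^*_{\cat T}(\one,X\otimes Y)$ with its $R\phat$-structure. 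Both routes are valid; the paper's is shorter because dualisability of $C$ lets one pull $C$ through the defining formula rather than leaving $Y$ inside the tensor.

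One point you should make explicit: the module-level observation requires $N$ to be $\fp$-torsion in the sense of being a colimit of finite-length $R\phat$-modules. You only argue that $N$ is $V(\fp)$-torsion over $R$ (from compactness of $\one$); that alone is not sufficient (e.g.\ $R/\fp$ is $V(\fp)$-torsion but not a colimit of finite-length modules when $\fp$ is not maximal). What saves the argument is that $X\otimes Y$, being in $\gam_\fp\cat T$, is also $\fp$-local, so $N$ is an $R_\fp$-module with support contained in $\{\fp R_\fp\}$; noetherianity of $R_\fp$ then shows every cyclic submodule of $N$ is annihilated by a power of $\fp R_\fp$ and hence has finite length. Since you have already established $X\otimes Y\in\gam_\fp\cat T$, the information is all there, but the $\fp$-locality of $N$ should be invoked explicitly rather than just its $V(\fp)$-torsion property.
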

\begin{proof}
It suffices to show that the induced map $\Hom_{\cat T}^*(C,D\phat X)\to\Hom_{\cat T}^*(C,\bcd X)$ is an isomorphism for all compact $C\in\cat T$. But this follows from Lemma~\ref{le:BC} and its analogue for $D\phat$, since $\Hom_{\cat T}^*(C,X)$ is $\fp$-torsion.
\end{proof}

We formulate an analogue of Matlis duality for modules. To this end, consider the following subcategories of $\cat T$:
\begin{equation*}
\label{eq:AandN}
\begin{aligned}
 \cat A_\fp &\coloneqq \{X\in \gam_\fp\cat T \mid \Hom^*_{\cat T}(C,X)\in\art R\phat \text{ for all } C\in\comp{\cat T}\} \\
\cat N_\fp & \coloneqq \{X\in \lam^\fp\cat T\mid \Hom^*_{\cat T}(C,X)\in\noeth R\phat \text{ for all } C\in\comp{\cat T}\}
\end{aligned}
\end{equation*}

\begin{theorem}
\label{th:bc-duality}
Let $\cat T$ be a rigidly compactly generated tensor triangulated category with a canonical  $R$-action such that $\cat T$ is noetherian over $R$. Fix $\fp$ in $\Spec R$. Then there are mutually quasi-inverse contravariant equivalences: 
\[
  \begin{tikzcd}[column sep = huge]
  \cat A_\fp
  \arrow[yshift=.75ex]{r}{D\phat=D_\fp}
  &\cat N_\fp
  \arrow[yshift=-.75ex]{l}{D\phat}
  \end{tikzcd}
\]
Moreover
\[
\comp{(\gam_\fp\cat T)}=\comp{(\lam^\fp\cat T)}\subseteq \cat A_\fp\cap\cat N_\fp\,,
\]
and there are equalities
\[
(D\phat)^{-1}(\cat A_\fp)\cap\cat T_\fp=\cat N_\fp\qquad\text{and}\qquad (D\phat)^{-1}(\cat N_\fp)\cap\cat T_\fp=\cat A_\fp\,.
\]
\end{theorem}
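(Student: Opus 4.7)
The strategy is to reduce the theorem to classical Matlis duality (Proposition~\ref{pr:matlis}) via an object-level analogue of Lemma~\ref{le:BC} for $D\phat$: for any compact $C\in\cat T$ and any $X$ on which $D\phat$ is defined, one has the natural iso
\[
\Hom^*_{\cat T}(C, D\phat X) \cong D\phat \Hom^*_{\cat T}(\swd C, X),
\]
proved by the same adjunction as Lemma~\ref{le:BC}. First I would dispatch the inclusion $\comp{(\gam_\fp\cat T)} \subseteq \cat A_\fp \cap \cat N_\fp$: for compact $X \in \gam_\fp\cat T$ and any $C \in \comp{\cat T}$, Lemma~\ref{le:p-torsion} gives $\fp^n \cdot \Hom^*_{\cat T}(C, X) = 0$, and the noetherian hypothesis makes this module finitely generated over $R$, hence of finite length; finite length modules are Matlis reflexive and lie in both $\art R\phat$ and $\noeth R\phat$.

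For the equivalence $D\phat \colon \cat A_\fp \leftrightarrows \cat N_\fp$: if $X \in \cat A_\fp$ and $C$ is compact, $\Hom^*_{\cat T}(\swd C, X) \in \art R\phat$, so by Matlis its $D\phat$-dual is noetherian, and the displayed iso then shows $\Hom^*_{\cat T}(C, D\phat X) \in \noeth R\phat$; moreover $D\phat X \in \lam^\fp\cat T$ because $\bcd = D\phat$ on $\gam_\fp\cat T$ and $\bcd$ targets $\lam^\fp\cat T$. Thus $D\phat X \in \cat N_\fp$, and the mirror argument gives $D\phat \colon \cat N_\fp \to \cat A_\fp$. Iterating the displayed iso, and using $\swd\swd C \cong C$, one obtains a natural iso
\[
\Hom^*_{\cat T}(C, (D\phat)^2 X) \cong (D\phat)^2 \Hom^*_{\cat T}(C, X),
\]
through which the object biduality corresponds to the module biduality on each Hom group. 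By Matlis reflexivity the latter is iso whenever $\Hom^*_{\cat T}(C, X)$ is artinian or noetherian over $R\phat$; since compact objects generate $\cat T$, the object biduality $X \to (D\phat)^2 X$ is iso on $\cat A_\fp \cup \cat N_\fp$, completing the equivalence.

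For the final equalities, the forward inclusions are immediate from the previous stage together with $\cat A_\fp \subseteq \gam_\fp\cat T \subseteq \cat T_\fp$ and $\cat N_\fp \subseteq \lam^\fp\cat T \subseteq \cat T_\fp$. For the reverse, take $X \in \cat T_\fp$ with $D\phat X \in \cat A_\fp$; then $(D\phat)^2 X \in \cat N_\fp$ by the previous stage, and the remaining task is to upgrade the biduality $X \to (D\phat)^2 X$ to an isomorphism. This is the main obstacle: for $X$ merely in $\cat T_\fp$, the module $M = \Hom^*_{\cat T}(\swd C, X)$ carries only an $R_\fp$-action a priori, and one must deduce Matlis reflexivity of $M$ over $R\phat$ from the lone hypothesis that $D\phat M$ is artinian over $R\phat$. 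I would attack this by combining $D\phat X \in \gam_\fp\cat T$ with Proposition~\ref{pr:tor-complete} and Lemma~\ref{le:p-torsion} to push $M$ into the Matlis reflexive class over $R\phat$ (extensions of noetherian by artinian modules), after which the Hom-level bidualities assemble, via the iso displayed above, to an iso at the object level and place $X$ in $\cat N_\fp$; the other equality is proved symmetrically.
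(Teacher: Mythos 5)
Your proof of the equivalence $D\phat\colon \cat A_\fp \rightleftarrows \cat N_\fp$, the compacts inclusion, and the observation that $X \to (D\phat)^2 X$ is an isomorphism on $\cat A_\fp \cup \cat N_\fp$ all follow the same route as the paper: the $D\phat$-analogue of Lemma~\ref{le:BC} plus classical Matlis duality (Proposition~\ref{pr:matlis}), with $\bcd$-theory supplying that $D\phat$ takes $\gam_\fp\cat T$ into $\lam^\fp\cat T$. That part is fine.

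The divergence is in the final equalities, where you over-reach. You reduce ``$X \in \cat T_\fp$ with $D\phat X \in \cat A_\fp$ implies $X \in \cat N_\fp$'' to proving that the biduality $X \to (D\phat)^2 X$ is an isomorphism, then flag this as the main obstacle and sketch an approach via extensions of noetherian by artinian modules. But establishing that isomorphism is both harder than necessary and not what the definition of $\cat N_\fp$ demands. The paper's argument is much leaner: one only needs that the hom-level biduality map
\[
\Hom^*_{\cat T}(C,X)\lra (D\phat)^2\Hom^*_{\cat T}(C,X)\cong\Hom^*_{\cat T}(C,(D\phat)^2 X)
\]
is a \emph{monomorphism} (which the canonical Matlis biduality always is), and since $(D\phat)^2 X$ lies in $\cat N_\fp$ the target is noetherian over $R\phat$. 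Thus $\Hom^*_{\cat T}(C,X)$ is noetherian, giving $X\in\cat N_\fp$ without ever deciding whether $X\to(D\phat)^2 X$ is an isomorphism. Your proposed workaround---pushing $M=\Hom^*_{\cat T}(\swd C,X)$ into the Matlis reflexive class by appealing to Proposition~\ref{pr:tor-complete} and Lemma~\ref{le:p-torsion}---is not fully articulated: you would first need to manufacture an $R\phat$-module structure on $M$ and then control it, which is precisely what the monomorphism argument avoids having to do. So the gap is not that your overall strategy is wrong, but that you have identified a stronger intermediate claim as necessary, cannot see how to prove it, and have left a vague sketch where the paper substitutes a one-line injectivity observation.
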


\begin{proof}
We apply the analogue of Lemma~\ref{le:BC} for $D\phat$ and use the Matlis duality from Proposition~\ref{pr:matlis}. In particular, for objects with an $R\phat$-action, the assignment $X\mapsto D\phat X$ sends artinian to noetherian objects, and vice versa.
Any artinian $R_\fp$-module is $\fp$-torsion, so $D\phat X\in\cat A_\fp$ for $X\in\cat N_\fp$.
On the other hand, for  $X\in\gam_\fp\cat T$ there are isomorphisms
\begin{equation*}
\label{eq:double}
D\phat X\cong \bcd X\cong \fHom(X,\bcd\one)\,,
\end{equation*}
and this lies in $\lam^\fp\cat T$. Thus $D\phat X\in\cat N_\fp$ for $X\in\cat A_\fp$.
For a compact $C\in\cat T$ there is a natural isomorphism
\[
 \Hom_{\cat T}^*(C,(D\phat)^2 X) \cong (D\phat)^2\Hom^*_{\cat T}(C,X)\,.
\]
Thus the canonical morphism $X\to (D\phat)^2 X$ is an isomorphism when $X$ is in $\cat A_\fp$ or in $\cat N_\fp$. This gives the desired pair of equivalences.

For the statement about the categories of compacts, we recall from Lemma~\ref{le:p-torsion} that $\comp{(\gam_\fp\cat T)}=\comp{(\lam^\fp\cat T)}$, and  $\comp{(\gam_\fp\cat T)}\subseteq \cat A_\fp\cap\cat N_\fp$ holds since $\cat T$ is noetherian. For the final pair of equalities, let $X\in\cat T_\fp$ such that $D\phat X$ is in $\cat A_\fp$. Then $(D\phat)^2 X$ is in $\cat N_\fp$. The map $\Hom^*_{\cat T}(C,X)\to \Hom^*_{\cat T}(C,(D\phat)^2 X)$ is a monomorphism for all compact $C$, again by Lemma~\ref{le:BC}. Thus $X$ is in $\cat N_\fp$. The other inclusion $(D\phat)^{-1}(\cat A_\fp)\cap\cat T_\fp\supseteq\cat N_\fp$ is clear. The proof of the equality for $\cat A_\fp$ is analogous.
\end{proof}

The following consequence is pertinent to the discussion in the next section.

\begin{corollary}
\label{co:bcd+dg}
For $\fp$ in $\Spec R$, there are equivalences of triangulated categories
 \[
    \begin{tikzcd}
    \thick(T_\fp{(\comp{\cat T})})^\op = \thick(\bcd{(\comp{\cat T}_\fp)})^\op \arrow[rightarrow,yshift=-.75ex,swap,r,"D\phat"]
	   & \thick(\lam^\fp{(\comp{\cat T}_\fp)}) \arrow[rightarrow,yshift=.75ex,l,swap,"D\phat"]          \arrow[rightarrow,yshift=-.75ex,swap,r,"\gam_\fp"]
	&  \thick(\gam_\fp{(\comp{\cat T}_\fp)})\,. \arrow[rightarrow,yshift=.75ex,l,swap,"\lam^\fp"]
    \end{tikzcd}
    \]
\end{corollary}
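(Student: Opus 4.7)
The plan is to assemble the equivalences from three ingredients already at our disposal: (i) Lemma~\ref{le:Tp-tensor} to match $T_\fp$ with $\bcd$ on compact objects, (ii) Theorem~\ref{th:bc-duality} to get the duality side, and (iii) Proposition~\ref{pr:tor-complete}/\eqref{eq:DG} to get the torsion/completion side. So the work reduces to checking that all the relevant generators sit in the correct subcategories $\cat A_\fp$ and $\cat N_\fp$, and that the functors in question send generators to generators.

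First I would dispose of the equality $\thick(T_\fp(\comp{\cat T}))^\op = \thick(\bcd(\comp{\cat T}_\fp))^\op$. By Lemma~\ref{le:Tp-tensor} one has $T_\fp C \cong \bcd(\swd C)$ for every $C \in \comp{\cat T}$. Since Spanier--Whitehead duality is an autoequivalence of $\comp{\cat T}$ and $T_\fp$ factors canonically through $\comp{\cat T}_\fp$, the two classes of objects generate the same thick subcategory of $\cat T$.

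Next I would set up the Brown--Comenetz equivalence. For $C \in \comp{\cat T}_\fp$, noetherianness of $\cat T$ implies that $\Hom^*_{\cat T}(\swd C', C)$ is finitely generated over $R_\fp$ for all $C' \in \comp{\cat T}$, so by Lemma~\ref{le:BC} the $R$-module $\Hom^*_{\cat T}(C', \bcd C) \cong \bcd \Hom^*_{\cat T}(\swd C', C)$ is artinian; thus $\bcd C \in \cat A_\fp$. Dually, by Lemma~\ref{le:DD-compact} we have $\lam^\fp C \cong \bcd^2 C$, and the same computation shows the cohomology is noetherian over $R\phat$, placing $\lam^\fp C$ in $\cat N_\fp$. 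Since $D\phat$ coincides with $\bcd$ on $\gam_\fp\cat T \supseteq \cat A_\fp$, one has $D\phat(\bcd C) \cong \bcd^2 C \cong \lam^\fp C$, so $D\phat$ sends generators of $\thick(\bcd(\comp{\cat T}_\fp))$ to generators of $\thick(\lam^\fp(\comp{\cat T}_\fp))$. Theorem~\ref{th:bc-duality} then forces $D\phat$ to restrict to a contravariant triangle equivalence between these two thick subcategories, with quasi-inverse again $D\phat$.

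Finally I would handle the right-hand equivalence. By \eqref{eq:DG}, $\gam_\fp$ and $\lam^\fp$ are mutually quasi-inverse triangle equivalences between $\lam^\fp\cat T$ and $\gam_\fp\cat T$. For $C \in \comp{\cat T}_\fp$, Proposition~\ref{pr:tor-complete} yields $\gam_\fp(\lam^\fp C) \cong \gam_\fp C$ and $\lam^\fp(\gam_\fp C) \cong \lam^\fp C$. Therefore $\gam_\fp$ sends each generator $\lam^\fp C$ of the middle category to the corresponding generator $\gam_\fp C$ of the right-hand category, and $\lam^\fp$ does the reverse; being exact equivalences, they restrict to mutually quasi-inverse equivalences between $\thick(\lam^\fp(\comp{\cat T}_\fp))$ and $\thick(\gam_\fp(\comp{\cat T}_\fp))$.

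The only subtlety I anticipate is the first step: making sure the identification $T_\fp C \cong \bcd(\swd C)$ of Lemma~\ref{le:Tp-tensor} is read through the factorisation $\comp{\cat T} \twoheadrightarrow \comp{\cat T}_\fp$, so that the two thick subcategories really have the same generating set. Everything else is bookkeeping built on the two duality pairs $(D\phat, D\phat)$ and $(\gam_\fp, \lam^\fp)$ already established.
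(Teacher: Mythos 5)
Your proof is correct and follows the same route as the paper's: Lemma~\ref{le:Tp-tensor} for the equality, Lemma~\ref{le:DD-compact} together with Theorem~\ref{th:bc-duality} for the $D\phat$-equivalence, and the Dwyer--Greenlees correspondence~\eqref{eq:DG} for the $\gam_\fp/\lam^\fp$-equivalence. You have merely spelled out the bookkeeping (generators landing in $\cat A_\fp$ and $\cat N_\fp$, generators mapping to generators) that the paper's terse proof leaves implicit.
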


\begin{proof}
The equality on the left follows from Lemma~\ref{le:Tp-tensor}. The next equivalence uses Lemma~\ref{le:DD-compact}
and is a restriction of the equivalence from Theorem~\ref{th:bc-duality}. The equivalence on the right is obtained from the Dwyer--Greenlees correspondence~\eqref{eq:DG}.
\end{proof}

\subsection*{Noetherian objects} 

The result below, which is an analogue of Proposition~\ref{pr:artinian}, characterises objects in the local category $\lam^\fp \cat T$ with noetherian cohomology.

\begin{proposition}
\label{pr:noetherian}
Fix  $\fp\in\Spec R$. An object $X\in\lam^\fp\cat T$ is in $\cat N_\fp$ if and only if $X$ has finite length over $R_\fp$.
\end{proposition}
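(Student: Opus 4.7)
The plan is to reduce this to Proposition~\ref{pr:artinian} by passing through Brown--Comenetz duality, which swaps the noetherian and artinian sides while preserving the finite length condition.

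The easy direction is that finite length over $R_\fp$ implies $X\in\cat N_\fp$. If $\Hom^*_\cat T(C,X)$ has finite length over $R_\fp$ for every compact $C$, then it is annihilated by a power of $\fp R_\fp$, so its $R\phat$-action factors through $R\phat/\fp^n R\phat\cong R_\fp/\fp^n R_\fp$. Hence it is finitely generated, a fortiori noetherian, over $R\phat$. Combined with the standing hypothesis $X\in\lam^\fp\cat T$, this gives $X\in\cat N_\fp$.

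For the converse, I would apply Theorem~\ref{th:bc-duality}. If $X\in\cat N_\fp$ then $Y\coloneqq D\phat X$ lies in $\cat A_\fp$, so $Y$ is artinian over $R_\fp$, and Proposition~\ref{pr:artinian} then yields that $Y$ has finite length over $R_\fp$. Moreover Theorem~\ref{th:bc-duality} gives $X\cong (D\phat)^2 X= D\phat Y$. For any compact $C\in\cat T$, the analogue of Lemma~\ref{le:BC} for $D\phat$ (invoked in the proof of Theorem~\ref{th:bc-duality}) provides a natural isomorphism
\[
\Hom^*_\cat T(C,X)\cong\Hom^*_\cat T(C,D\phat Y)\cong D\phat \Hom^*_\cat T(\swd C,Y).
\]
Since $\swd C$ is compact and $Y$ has finite length over $R_\fp$, the module $\Hom^*_\cat T(\swd C,Y)$ has finite length over $R_\fp$, and Matlis duality (Proposition~\ref{pr:matlis}) preserves finite length. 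Hence $\Hom^*_\cat T(C,X)$ has finite length over $R_\fp$, as required.

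There is essentially no technical obstacle once the duality machinery of Section~\ref{se:bc-duality} is in place; the proof is a bookkeeping exercise matching noetherian objects in $\lam^\fp\cat T$ to their Matlis duals, which are artinian objects in $\gam_\fp\cat T$ to which the already established Proposition~\ref{pr:artinian} applies. The only point needing mild care is the isomorphism $X\cong (D\phat)^2 X$, which requires $X\in\lam^\fp\cat T$ (not merely $\cat T_\fp$); this is built into the definition of $\cat N_\fp$, so it comes for free.
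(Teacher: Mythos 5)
Your proposal has the two directions swapped as to which is easy and which requires the duality argument, and this traces to a single misreading of the finiteness hypothesis. For an object $X$ of $\lam^\fp\cat T$, the phrase ``$X$ has finite length over $R_\fp$'' (Definition~\ref{de:finiteness} applied with ambient category $\lam^\fp\cat T$) means that $\Hom^*_{\cat T}(C,X)$ has finite length for all compact $C$ in $\lam^\fp\cat T$, whereas membership of $\cat N_\fp$ is about compacts of $\cat T$ itself. These are genuinely different classes, and your two arguments both quietly conflate them.

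In your claimed easy direction (finite length $\Rightarrow\cat N_\fp$) you start from ``If $\Hom^*_{\cat T}(C,X)$ has finite length over $R_\fp$ for every compact $C$'' and deduce it is annihilated by a power of $\fp$, hence noetherian over $R\phat$. But the hypothesis gives this only for $C\in\comp{(\lam^\fp\cat T)}$; for a general compact $C$ of $\cat T$, $\Hom^*_{\cat T}(C,X)$ need not be $\fp$-torsion at all, and you have no control over it. This is in fact the \emph{hard} direction, and it is precisely where the paper invokes Brown--Comenetz duality (passing to $\gam_\fp\bcd X$, applying Proposition~\ref{pr:artinian}, and carefully checking that $X\to\bcd{\gam_\fp\bcd X}$ is an isomorphism). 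In your ``converse'' ($\cat N_\fp\Rightarrow$ finite length), the argument over-reaches: you conclude $\Hom^*_{\cat T}(C,X)\cong D\phat\Hom^*_{\cat T}(\swd C,Y)$ has finite length for \emph{arbitrary} compacts $C$ of $\cat T$, which is false — take $X=\lam^\fp\one$ and $C=\one$, giving $R\phat$. The gap is the claim that $\Hom^*_{\cat T}(\swd C,Y)$ has finite length: $Y\in\cat A_\fp$ is artinian, and Proposition~\ref{pr:artinian} gives finite length of $\Hom^*_{\cat T}(\swd C,Y)$ only when $\swd C$ is compact in $\gam_\fp\cat T$, not for arbitrary compacts. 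This direction is actually the easy one: for $C$ compact in $\lam^\fp\cat T$ the module $\Hom^*_{\cat T}(C,X)$ is automatically $\fp$-torsion by Lemma~\ref{le:p-torsion}, so noetherian forces finite length immediately. In short, you have the right toolkit (duality plus Proposition~\ref{pr:artinian}) but applied to the wrong direction, and without the bookkeeping of which compacts are in play that the paper's proof is careful about.
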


\begin{proof}
The module $\Hom^*_{\cat T}(C,X)$ is $\fp$-torsion by Lemma~\ref{le:p-torsion}, and therefore the forward implication  is clear.

Suppose the $R_\fp$-module $\Hom^*_{\cat T}(C,X)$ has finite length for each compact object $C$ in $\lam^\fp{\cat T}$. Then Lemma~\ref{le:BC} implies that $\Hom^*_{\cat T}(C,\bcd X)$ has finite length over $R_\fp$ for each compact object $C$ in $\gam_\fp\cat T$. Thus Proposition~\ref{pr:artinian} yields that the object $\gam_\fp \bcd X$ is artinian over $R_\fp$. Then its Brown--Comenetz dual is in $\cat N_\fp$, by Theorem~\ref{th:bc-duality}.

The natural map $\gam_\fp \bcd X\to \bcd X$ induces the map on the right below:
\[
X\longrightarrow \bcd^2 X \longrightarrow \bcd{\gam_\fp \bcd X}\,.
\]
The one of the left is the biduality map.  So it suffices to verify that the composite $X\to \bcd{\gam_\fp\bcd X}$ is an isomorphism.
Since source and target are in $\lam^\fp\cat T$, it suffices to verify that the induced map
\[
\Hom_{\cat T}^*(C,X)\longrightarrow \Hom_{\cat T}^*(C, \bcd{\gam_\fp\bcd X})
\]
is an isomorphism for each compact object $C$ in $\lam^\fp\cat T$. It remains to observe that one has isomorphisms
\begin{align*}
\Hom_{\cat T}^*( C, \bcd{\gam_\fp\bcd X})
    &\cong \bcd \Hom_{\cat T}^*(\swd C, \gam_\fp\bcd X) \\
    &\cong \bcd \Hom_{\cat T}^*(\swd C, \bcd X) \\
    &\cong \bcd^2 \Hom_{\cat T}^*(C, X)\\
    &\cong \Hom_{\cat T}^*(C, X)\,.
\end{align*}
The first and the third isomorphism follow from  Lemma~\ref{le:BC}. The second isomorphism uses that $\swd C$ is in $\gam_\fp\cat T$, and the last one holds because the $R_\fp$-module $\Hom_{\cat T}^*(C,X)$ has finite length. 
\end{proof}

\subsection*{Dualisable objects}
Our goal is a cohomological description of dualisable objects in a local stratum of a tensor triangulated category, and here is a first step. 

\begin{proposition}
      \label{pr:dualising-hom}
 Let $\cat T$ be a rigidly compactly generated tensor triangulated category with a canonical $R$-action such that $\cat T$ is noetherian over $R$. Fix $\fp$ in $\Spec R$. The following statements hold.
 \begin{enumerate}[\quad\rm(1)]
     \item 
     The dualisable objects of $\gam_\fp{\cat T}$ are in $\cat A_\fp$, and the dualisable objects of $\lam^\fp{\cat T}$ are in $\cat N_\fp$. 
     \item 
     There are inclusions $\gam_\fp(\comp{\cat T}_\fp)\subseteq \cat A_\fp$  and $\lam^\fp(\comp{\cat T}_\fp)\subseteq \cat N_\fp$.
 \end{enumerate} 
 \end{proposition}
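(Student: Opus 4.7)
My plan is to prove part (1) first; part (2) then follows immediately since objects in $\gam_\fp(\comp{\cat T}_\fp)$ are dualisable in $\gam_\fp\cat T$ by Corollary~\ref{co:plocal-cat}, and those in $\lam^\fp(\comp{\cat T}_\fp)$ are dualisable in $\lam^\fp\cat T$ by Corollary~\ref{co:plocal-cat-lambda}. For part (1a), let $X\in\gam_\fp\cat T$ be dualisable, with Spanier--Whitehead dual $\gam_\fp\fHom(X,\gam_\fp\one)$ in the ambient tensor structure of $\gam_\fp\cat T$, and fix a compact $C\in\cat T$. By Proposition~\ref{pr:artinian} together with Matlis duality (Proposition~\ref{pr:matlis}), the condition $X\in\cat A_\fp$ is equivalent to asking that $\bcd\Hom^*_\cat T(C,X)$ be a finitely generated $R\phat$-module for every such $C$.

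The plan is to recognise this Matlis dual in tensor-triangular terms and then exploit dualisability. Combining Lemma~\ref{le:BC} with the identification $T_\fp C\cong\bcd(\swd C)$ of Lemma~\ref{le:Tp-tensor} yields
\[
\bcd\Hom^*_\cat T(C,X) \;\cong\; \Hom^*_\cat T(X, T_\fp C).
\]
Since $T_\fp C$ lies in $\gam_\fp\cat T$ (as $\cat T$ is noetherian), dualisability of $X$ rewrites the right-hand side as $\Hom^*_\cat T(\gam_\fp\one,\, \gam_\fp\fHom(X,\gam_\fp\one)\otimes T_\fp C)$. Using $T_\fp C\cong C\otimes\bcd\one$ (Lemma~\ref{le:Tp-tensor}) and a second application of dualisability to $Y=\bcd\one\in\gam_\fp\cat T$---which gives $\gam_\fp\fHom(X,\gam_\fp\one)\otimes\bcd\one\cong\gam_\fp\bcd X$---the task reduces to verifying that $\Hom^*_\cat T(\gam_\fp\one,\, C\otimes\gam_\fp\bcd X)$ is noetherian over $R\phat$. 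Part (1b) then follows by an analogous argument in $\lam^\fp\cat T$, or alternatively by transport along the Dwyer--Greenlees equivalence~\eqref{eq:DG} paired with the interchange $\cat A_\fp\leftrightarrow\cat N_\fp$ supplied by Theorem~\ref{th:bc-duality}.

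The main obstacle is this closing finiteness step: dualisability alone supplies tensor-hom isomorphisms but no finite-generation data. The noetherian hypothesis on $\cat T$ is what makes the step work, entering through Matlis duality: it forces $\bcd\one=T_\fp\one$ to have artinian cohomology (since $\End^*_\cat T(\one)$ is $R$-noetherian), and I would propagate this finiteness from $\bcd\one$ to $\gam_\fp\bcd X$ via the dualisability isomorphism $\gam_\fp\bcd X\cong\gam_\fp\fHom(X,\gam_\fp\one)\otimes\bcd\one$. The model for the final estimate is the direct local-cohomology computation for $\gam_\fp(\comp{\cat T}_\fp)\subseteq\cat A_\fp$---realise $\gam_\fp\one$ as a filtered colimit of Koszul objects $\kos{\one_\fp}{\fp^n}$ and apply Lemma~\ref{le:koszul-tt} to express $\Hom^*_\cat T(E,\gam_\fp\one)$ as $H^*_{\fp R_\fp}(\Hom^*_\cat T(E,\one_\fp))$, which is artinian by the classical Grothendieck theorem. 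Making this bootstrap go from compact inputs $\gam_\fp C$ to a general dualisable $X$ is the subtle point, and it is precisely where the dualising data $\gam_\fp\bcd X\cong \gam_\fp\fHom(X,\gam_\fp\one)\otimes\bcd\one$ is the essential lever.
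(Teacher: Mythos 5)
There is a genuine gap. Your reduction is circular: unwinding it shows that the module you end up needing to control is the one you started with.

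Concretely, you start from $\Hom^*_{\cat T}(X,T_\fp C)\cong\bcd\Hom^*_{\cat T}(C,X)$ and then, via adjunction in $\gam_\fp\cat T$ and dualisability of $X$, arrive at $\Hom^*_{\cat T}(\gam_\fp\one,\,C\otimes\gam_\fp\bcd X)$. But since $\gam_\fp\one$ is torsion this equals $\Hom^*_{\cat T}(\gam_\fp\one,\,C\otimes\bcd X)$, and $C\otimes\bcd X\cong\bcd(\swd C\otimes X)$ because $C$ is rigid; applying the defining property of $\bcd$ and the fact that $X$ is $\fp$-local $\fp$-torsion (so $X\otimes\gam_\fp\one\cong X$) gives back $\bcd\Hom^*_{\cat T}(C,X)$ on the nose. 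The ``propagation'' you propose --- passing artinian cohomology from $\bcd\one$ across the tensor product with $\swd X$ --- is exactly the statement that tensoring an artinian object with a dualisable one stays artinian, which is the content of the proposition. Neither $T_\fp C$ nor $\gam_\fp\bcd X$ nor $C\otimes\gam_\fp\bcd X$ is compact, so the noetherian hypothesis on $\cat T$ never gets a handle.

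The escape hatch the paper uses is different and much shorter. Work with $\cat S=\gam_\fp\cat T$ (or $\lam^\fp\cat T$) and a compact $C\in\cat S$, so by Propositions~\ref{pr:artinian} and~\ref{pr:noetherian} one only needs $\Hom^*_{\cat S}(C,X)$ to have finite length over $R_\fp$. Write this as $\Hom^*_{\cat S}(\one_{\cat S},\swd C\otimes X)$. Now the decisive point: $\swd C$ is compact in $\cat S$ and $X$ is dualisable in $\cat S$, so $Y:=\swd C\otimes X$ is \emph{compact} in $\cat S$ by Proposition~\ref{pr:dualisable}(3), hence compact in $\cat T_\fp$, hence also in $\lam^\fp\cat T$ by Lemma~\ref{le:p-torsion}. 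Since $Y$ is $\fp$-complete, $\Hom^*_{\cat S}(\one_{\cat S},Y)\cong\Hom^*_{\cat T_\fp}(\one_\fp,Y)$, and the latter is a $\Hom$ between compacts of $\cat T_\fp$, hence noetherian over $R_\fp$; it is also $\fp$-torsion by Lemma~\ref{le:p-torsion}, hence of finite length. Your Brown--Comenetz detour never produces a compact object to which the noetherian hypothesis can be applied, which is why the finiteness step resists closing. (Your treatment of part~(2) from part~(1) via Corollaries~\ref{co:plocal-cat} and~\ref{co:plocal-cat-lambda} is fine and matches the paper.)
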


\begin{proof}
(1) Let $\cat S$ denote either $\gam_\fp\cat T$ or $\lam^\fp\cat T$ and view it as an $R_\fp$-linear category. Fix a dualisable object $X\in\cat S$. Given Propositions~\ref{pr:artinian} and \ref{pr:noetherian} it suffices to verify that the $R_\fp$-module  $\Hom^*_{\cat S}(C,X)$ has finite length for each compact object $C$. Consider the isomorphism of $R_\fp$-modules
\[
\Hom_{\cat S}^*(C,X) \cong \Hom^*_{\cat S}(\one,\swd C \otimes  X)\,,
\]
where the unit and tensor product are the ones in $\cat S$. The object $Y\coloneqq\swd C\otimes X$ is compact in $\cat S$, by Proposition~\ref{pr:dualisable}. Lemma~\ref{le:p-torsion} gives
\[
\comp{(\gam_\fp\cat T)}=\comp{\cat S}=\comp{(\lam^\fp\cat T)}\,.
\]  
This yields an isomorphism $\Hom_{\cat S}^*(\one,Y)\cong\Hom_{\cat T_\fp}^*(\one_\fp,Y)$ of $R_\fp$-modules, where $\one_\fp$ denote the unit in $\cat T_\fp$, and $\Hom_{\cat T_\fp}^*(\one_\fp,Y)$ has finite length over $R_\fp$.

(2) We apply part (1). Any object in $\gam_\fp(\comp{{\cat T}}_\fp)$ is dualisable and therefore in $\cat A_\fp$. In the same vein, any object in $\lam^\fp(\comp{{\cat T}_\fp})$ is dualisable and therefore in $\cat N_\fp$.
\end{proof}

The diagram below illustrates the findings in this section. Corollary~\ref{co:compact-dualisable} provides a context where the bottom-most inclusion is an equality.
\begin{figure}[ht]
\centering
\[
\begin{tikzcd}[column sep = 3ex, row sep = 3ex]
&\cat T_\fp\\
&\comp{\cat T}_\fp\arrow[hook]{u}\arrow{ldd}{\gam_\fp}\arrow[swap]{rdd}{\lam^\fp}\\
\gam_\fp\cat T\arrow[hook]{ruu}&&\lam^\fp\cat T\arrow[hook]{luu}\\
\cat A_\fp\arrow[hook]{u}\arrow[leftrightarrow,swap]{rr}{D\phat}&&\cat N_\fp\arrow[hook]{u}\\
{}\\
&\cat A_\fp\cap\cat N_\fp\arrow[hook]{luu}\arrow[hook]{ruu}\\
&\comp{(\gam_\fp\cat T)}=\comp{(\lam^\fp\cat T)}\arrow[hook]{u}
\end{tikzcd}
\]
\caption{Subcategories of $\cat T_\fp$}\label{fi:figure}
\end{figure}

\section{Strong generation}
\label{se:strong-gen}

We introduce a notion of local regularity for compactly generated triangulated categories which builds on the notion of strong generation. A key consequence of this property is a strong finiteness condition for the class of artinian objects, analogous to finite global dimension in the setting of abelian categories. We begin by recalling the notion of a strong generator from \cite{Bondal/vandenBergh:2003a,Rouquier:2008a}.

\subsection*{Strong generation}
Fix a triangulated category $\cat T$ with suspension $\Si\colon\cat T
\iso\cat T$. For an object $X$ in $\cat T$ and $n\ge 0$ set
\[
\thick^n(X)\coloneqq\begin{cases}
  \{0\} &n=0,\\
  \add\{\Si^i X\mid i\in\bbZ\}&n=1,\\
  \add\{\cone\phi\mid\phi\in\Hom_\cat T(\thick^1(X), \thick^{n-1}(X))&n>1,
\end{cases}\]
where $\add\cat X$ denotes the smallest full subcategory containing $\cat X$ that is closed under finite direct sums and direct summands. An object $X$ is a \emph{strong generator} for $\cat T$ if $\cat T=\thick^n(X)$ for some integer $n\ge 0$. The category $\cat T$ is \emph{strongly generated} if it admits a strong generator; in this case any object $X$ satisfying  $\thick(X)=\cat T$ is a strong generator.

\begin{example}\label{ex:gldim}
    Let $\cat A$ be an abelian category with an injective cogenerator $I$ such that every object embeds into a finite direct sum of copies of $I$. Viewing $I$ as a complex concentrated in degree zero, the equality $\bfD^{\mathrm b}(\cat A)=\thick(I)$ holds if and only if each object in $\cat A$ has finite injective dimension. Moreover, $\thick(I)$ is strongly generated if and only if $\cat A$ has finite global dimension. This is a consequence of the ghost lemma, cf.\ \cite[Lemma 2.4]{Krause/Kussin:2006a}. 
    
    There is also a dual statement when $\cat A$ has a projective generator. It implies, for example, that the category of perfect complexes over a local noetherian ring admits a strong generator if and only if the ring is regular; see  \cite[Proposition~7.25]{Rouquier:2008a}. 
\end{example}

\subsection*{Artinian objects}

Let $\cat T$ be a compactly generated $R$-linear triangulated category. The subcategory of objects that are artinian over $R$ is denoted $\art_R\cat T$. 

\begin{lemma}
\label{le:t-artinian}
When the $R$-linear category $\cat T$ is noetherian one has inclusions
\begin{gather*}
    \thick\{\gam_\fm C\mid C\in\comp{\cat T},\,\fm\in \Spec R 
            \text{ maximal}\}\subseteq \art_R\cat T \\
    \thick\{T_\fm C\mid C\in\comp{\cat T},\,\fm\in \Spec R 
            \text{ maximal}\}\subseteq \art_R\cat T\,.
\end{gather*}
\end{lemma}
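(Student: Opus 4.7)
The plan is first to observe that $\art_R\cat T$ is a thick subcategory of $\cat T$: artinian $R$-modules form a Serre subcategory, so closure under extensions follows from the long exact sequence associated to a triangle, and closure under direct summands is immediate. It then suffices to verify that each generator $\gam_\fm C$ and $T_\fm C$ is artinian over $R$, for $C\in\comp{\cat T}$ and $\fm$ maximal in $\Spec R$.

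The case of $T_\fm C$ is the easy one. For any compact $D$ the defining isomorphism \eqref{eq:inj-coh} gives
\[
\Hom^*_\cat T(D,T_\fm C)\cong\Hom^*_R(\Hom^*_\cat T(C,D),I(\fm))\,.
\]
Since $\cat T$ is noetherian, $\Hom^*_\cat T(C,D)$ is a finitely generated $R$-module; presenting it as a quotient of a free module and dualising embeds its Matlis dual at the maximal ideal $\fm$ into a finite direct sum of copies of the artinian module $I(\fm)$, and Proposition~\ref{pr:matlis} confirms that the result is artinian.

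The case of $\gam_\fm C$ requires more work, because $\gam_\fm C$ is typically not compact. The approach is to fix a compact $D$ and a generating sequence $\bsr=r_1,\dots,r_d$ for $\fm$, and work with the compact Koszul object $\kos D{\bsr}$. By Lemma~\ref{le:p-torsion} its endomorphism ring is annihilated by a power of $\fm$, so every $R$-module $\Hom^*_\cat T(E,\kos D{\bsr})$ is $\fm^N$-torsion; since $\fm$ is maximal, such modules are automatically both $V(\fm)$-torsion and $\fm$-local, and hence $\kos D{\bsr}$ lies in $\gam_\fm\cat T$. Applying $\Hom^*_\cat T(\kos D{\bsr},-)$ to the triangle $\gam_\fm C\to C_\fm\to L_{V(\fm)} C_\fm\to$ kills the third term, since its target is $V(\fm)$-local and its source is $V(\fm)$-torsion, giving
\[
\Hom^*_\cat T(\kos D{\bsr},\gam_\fm C)\cong\Hom^*_\cat T(\kos D{\bsr},C_\fm)\cong\Hom^*_\cat T(\kos D{\bsr},C)\,,
\]
where the second isomorphism reflects that $\Hom^*_\cat T(\kos D{\bsr},C)$ is already $\fm$-local. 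By Lemma~\ref{le:koszul-tt} this module has finite length over $R$, since the Koszul cohomology of the finitely generated $R$-module $\Hom^*_\cat T(D,C)$ at $\fm$ is supported on $R/\fm$-vector spaces and is therefore of finite length. Feeding this back through Lemma~\ref{le:koszul-tt} in the opposite direction shows that $H^*(\kos{M}{\fm})$ has finite length for $M\coloneqq\Hom^*_\cat T(D,\gam_\fm C)$; in particular the socle $M[\fm]$ is finite dimensional over $R/\fm$. Since $\gam_\fm C\in\gam_\fm\cat T$, the module $M$ is $V(\fm)$-torsion, so every element is killed by a power of $\fm$; a standard Matlis argument embeds it into a finite direct sum of copies of $I(\fm)$ and hence shows that it is artinian. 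The main obstacle in this part of the argument is precisely the non-compactness of $\gam_\fm C$: this forces the Koszul detour through Lemma~\ref{le:koszul-tt} to extract the required socle bound, rather than allowing a direct passage from the noetherian hypothesis on $\cat T$ to artinianness.
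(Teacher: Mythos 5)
Your proof is correct and follows essentially the same route as the paper. For $T_\fm C$ both proofs invoke the defining isomorphism \eqref{eq:inj-coh} and Matlis duality; for $\gam_\fm C$ both proofs reduce to the key observation that $\Hom^*_\cat T(\kos D\fm,\gam_\fm C)\cong\Hom^*_\cat T(\kos D\fm,C)$ is a noetherian, $\fm$-torsion $R$-module and hence of finite length. The only real difference is one of bookkeeping: the paper at this point simply cites Proposition~\ref{pr:artinian} to pass from finite length on $\comp{(\gam_\fm\cat T)}$ to artinianness of $\Hom^*_\cat T(D,\gam_\fm C)$ for arbitrary compact $D$, while you re-derive that implication inline by extracting a socle bound via Lemma~\ref{le:koszul-tt} and embedding $M$ into a finite sum of copies of $I(\fm)$. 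Your inline argument is sound and is in substance the content of Lemma~\ref{le:coNak} (used in the proof of Proposition~\ref{pr:artinian}), so the two proofs are the same in spirit; citing Proposition~\ref{pr:artinian} directly would shorten yours.
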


\begin{proof}
Since $\art_R \cat T$ is a thick subcategory of $\cat T$, it suffices to verify that it contains $\gam_\fm C$ for $\fm$ maximal and $C$ compact. 
Using Proposition~\ref{pr:artinian} it suffices to show that the $R_\fm$-module $\Hom^*_{\cat T_\fm}(X,\gam_\fm C)$ has finite length for all $X\in\comp{(\gam_\fm\cat T)}$. We have $\Hom^*_{\cat T_\fm}(X,\gam_\fm C)\cong\Hom^*_{\cat T_\fm}(X,C_\fm)$ for such $X$. This module is noetherian and $\fm$-torsion, so of finite length.
For the other inclusion, one uses the isomorphism defining $T_\fm C$. Matlis duality yields that $\Hom_R^*(-,I(\fm))$ takes finitely generated modules to artinian modules, since $I(\fm)$ is artinian.
\end{proof}

\begin{definition}
\label{def:Rlocally}
 We say that $\cat T$ is \emph{locally regular over $R$} if it is noetherian and has a compact generator $G$ such that $\thick(T_\fp G)$ is strongly generated for each $\fp\in\Spec R$.
\end{definition}

Local regularity has useful consequences for the class of artinian objects. In particular, $\thick(T_\fp G)$ does not depend on the choice of the compact generator $G$.

\begin{proposition}
\label{pr:strong-regular}
    Let $R$ be a graded local commutative noetherian ring  with maximal ideal $\fm$, and $\cat T$ a compactly generated $R$-linear triangulated category.  Assume $\cat T$ is noetherian and has a compact generator $G$. If $\thick(T_\fm G)$ is strongly generated, then  $\thick(T_\fm G)=\art_R\cat T$. If in addition $\thick(\gam_\fm G)$ is strongly generated, then  \[\thick(\gam_\fm G)=\art_R\cat T=\thick(T_\fm G)\,.\]
\end{proposition}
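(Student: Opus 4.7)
By Lemma~\ref{le:t-artinian} we already have $\thick(T_\fm G)\subseteq\art_R\cat T$ and $\thick(\gam_\fm G)\subseteq\art_R\cat T$, so the content of the proposition lies in the reverse inclusions under the respective strong generation hypotheses. Both statements follow the same template—a cellular injective-style approximation by the relevant object, terminated by the strong generation bound—and my plan is to treat the case of $T_\fm G$ in detail and then indicate how it adapts to $\gam_\fm G$.

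Fix $X\in\art_R\cat T$. Since $\cat T$ is noetherian and $G$ is a compact generator, the graded $R$-module $M:=\Hom^*_\cat T(G,X)$ is artinian; its socle is finite-dimensional over $R/\fm$, so $M$ admits an essential graded embedding into a finite injective envelope $I_0=\bigoplus_j\Si^j I(\fm)^{r_j}$. By the defining representability \eqref{eq:inj-coh}, this lifts to a morphism $\beta_0\colon X\to N_0$ in $\cat T$ with $N_0\in\add\{\Si^j T_\fm G\}$ realising the embedding after $\Hom^*_\cat T(G,-)$. Completing to a triangle $X\xrightarrow{\beta_0}N_0\to C_0\to\Si X$, the cofibre $C_0$ is again artinian and the connecting map $C_0\to\Si X$ is a $G$-ghost. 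Iterating this approximation on the successive cofibres $C_i$ and splicing via iterated octahedra yields, for any $n\geq 1$, a triangle
\[
X\lra \widetilde N_n\lra C_{n-1}\lra\Si X
\]
in which $\widetilde N_n\in\thick^n\{\Si^j T_\fm G\}$ and the boundary $C_{n-1}\to\Si X$ is an $n$-fold composition of $G$-ghosts between artinian objects.

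Under the hypothesis $\thick(T_\fm G)=\thick^n(T_\fm G)$, we then have $\widetilde N_n\in\thick(T_\fm G)$, and it remains to show that for this $n$ the boundary $C_{n-1}\to\Si X$ vanishes; the triangle would then split and present $X$ as a direct summand of $\widetilde N_n\in\thick(T_\fm G)$. This vanishing is the main obstacle. The key observation is that, between artinian objects, faithfulness of Matlis duality on $\fm$-torsion modules combined with \eqref{eq:inj-coh} identifies $G$-ghosts with $T_\fm G$-co-ghosts, so the relevant boundary is simultaneously an $n$-fold $T_\fm G$-co-ghost. A dual version of the Christensen--Neeman--Rouquier ghost lemma, cf.~\cite{Rouquier:2008a}, applied to the strongly generated subcategory $\thick(T_\fm G)=\thick^n(T_\fm G)$, forces such an $n$-fold composition to annihilate $\Hom^*_\cat T(-,E')$ for every $E'\in\thick(T_\fm G)$; a further manipulation using \eqref{eq:inj-coh} and the artinianity of source and target upgrades this ghost nilpotence to the vanishing of the boundary map itself. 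This parallels, in the triangulated world, the classical argument of Example~\ref{ex:gldim} that strong generation of the subcategory of injective objects is equivalent to finite global dimension.

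Once $\art_R\cat T=\thick(T_\fm G)$ is established, the second statement follows by rerunning the same approximation strategy with $\gam_\fm G$ in place of $T_\fm G$: the approximating morphisms now arise from the universal property of $\gam_\fm G$ as a generator of the localising subcategory $\gam_\fm\cat T\supseteq\art_R\cat T$ (Lemma~\ref{le:gam-loc-V}), and the hypothesis $\thick(\gam_\fm G)=\thick^n(\gam_\fm G)$ supplies the termination. This yields $\art_R\cat T\subseteq\thick(\gam_\fm G)$, and combining with Lemma~\ref{le:t-artinian} one obtains the triple equality $\thick(\gam_\fm G)=\art_R\cat T=\thick(T_\fm G)$.
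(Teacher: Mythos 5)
Your strategy is genuinely different from the paper's. The paper invokes the graded representability theorem of Rouquier--Letz: the functor $\Hom^*_{\cat T}(X,-)|_{\cat I}$ takes values in $\noeth R\mhat$, strong generation of $\cat I = \thick(T_\fm G)$ makes it representable by some $X'\in\cat I$, and the Yoneda morphism $X\to X'$ is checked to be an isomorphism because after $\Hom^*_{\cat T}(C,-)$ it is an isomorphism for each compact $C$ (via Matlis duality and $T_\fm G$ being a cogenerator), so the cone vanishes by compact generation. Note that this last step is elementary: a morphism inducing isomorphisms on $\Hom^*_{\cat T}(C,-)$ for all compacts $C$ is an isomorphism. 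Your explicit Adams tower plus ghost lemma unwinds what Rouquier's theorem hides, which is a legitimate and more hands-on alternative.

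However, as written the proof has a gap exactly where you say the main obstacle lies. The chain you sketch — boundary is an $n$-fold $G$-ghost, hence an $n$-fold $T_\fm G$-co-ghost, hence by the dual ghost lemma $\Hom^*_{\cat T}(g,E')=0$ for all $E'\in\thick(T_\fm G)$, hence by Matlis duality $\Hom^*_{\cat T}(C,g)=0$ for all compact $C$ — terminates with the statement that $g$ is a \emph{phantom} map. This is strictly stronger than being a $G$-ghost, so the ghost-lemma stage has done real work, but it does not by itself force $g=0$: phantom maps are generally nonzero, and the ``cone has zero $\Hom^*_{\cat T}(C,-)$'' trick that the paper uses is not available because you are trying to show a single map is zero, not that a cone is zero. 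What closes the argument is an extra input you do not name: $X$ is artinian, hence $\Sigma$-pure-injective by Proposition~\ref{pr:pure-inj}, and a phantom map into a pure-injective object is zero. Concretely, rotating the triangle $X\to\widetilde N_n\to C_{n-1}\xrightarrow{g}\Si X$ shows that $X\to\widetilde N_n$ is a pure monomorphism (since $g$ is phantom), so it splits by pure-injectivity and $X$ is a direct summand of $\widetilde N_n\in\thick^n(T_\fm G)$. The phrase ``artinianity of source and target'' gestures at this but does not deliver it; without the pure-injectivity step the argument is circular, because applying the ghost lemma directly to $g$ would require $C_{n-1}$ or $\Si X$ to already lie in $\thick^n(T_\fm G)$, which is precisely what is being proved. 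A similar word of caution applies to your sketch for $\thick(\gam_\fm G)$: there the paper passes to contravariant functors $\cat J^{\op}\to\noeth R\mhat$, and the analogue of your approximation tower is a ``projective'' tower, for which the relevant finiteness (noetherianness rather than artinianness) and the Matlis-dual of the pure-injectivity input both need to be supplied explicitly.
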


\begin{proof}
We write $R\mhat$ for the completion of $R$ at $\fm$ and note that $\End^*_R(I(\fm))\cong R\mhat$. Set  $\cat I=\thick(T_\fm G)$ and fix $X\in\art_R\cat T$.
Then $\Hom^*_\cat T(X,Y)$ is naturally a finitely generated graded $R\mhat$-module for all $Y\in\cat I$. This yields a cohomological functor
\[
\Hom^*_{\cat T}(X,-)|_{\cat I}\colon\cat I\lra\noeth R\mhat\,.
\] 
 Since $\cat I$ is strongly generated, a variation of Brown representability due to Rouquier \cite{Rouquier:2008a} and extended by Letz to the graded setting in \cite[Theorem~2.7]{Letz:2023a}, implies that this functor is graded representable.  Let $X'$ denote the representing object. By Yoneda's lemma, an isomorphism
  \[
  \Hom^*_{\cat I}(X',-)\iso \Hom^*_{\cat T}(X,-)|_{\cat I}
  \] 
induces a morphism $X\to X'$, which is an isomorphism since $T_\fm G$ is a cogenerator for $\gam_\fm\cat T$.
Thus $X$ belongs to $\cat I$. 

The proof for $\cat J=\thick(\gam_\fm G)$ is analogous, using cohomological functors $\cat J^\op\to \noeth R\mhat$. Note that  $\Hom^*_\cat T(X,Y)$ is naturally a finitely generated graded $R\mhat$-module  for all $X,Y\in\cat J$ since  $\cat J\subseteq\cat I$.
\end{proof}

\subsection*{Hom-symmetry} 

The following symmetry statement is needed later. It assumes a weak Gorenstein property that holds when $\cat T$ is a locally regular tensor triangulated category; see Lemma~\ref{le:reg-bcd+dg} and Proposition~\ref{pr:strong-regular}

\begin{lemma}\label{le:hom-symmetry}
Let $\fp\in\Spec R$ and assume
\[
\thick\{\gam_\fp C\mid C\in\comp{\cat T}\}=\thick\{T_\fp C\mid C\in\comp{\cat T}\}\,.
\]
Then the following holds for each $X\in\gam_\fp\cat T$.
\begin{enumerate}[\quad\rm(1)]
\item The $R_\fp$-module $\Hom^*_{\cat T}(C,X)$ has finite length for each $C\in\comp{\cat T}$ if and only if $\Hom^*_{\cat T}(X,C)$ has finite length for each $C\in\comp{\cat T}$. 
\item The $R_\fp$-module $\Hom^*_{\cat T}(C,X)$ has finite length for each $C\in\comp{(\gam_\fp\cat T)}$ if and only if $\Hom^*_{\cat T}(X,C)$ has finite length for each $C\in\comp{(\gam_\fp\cat T)}$. 
\end{enumerate}

\end{lemma}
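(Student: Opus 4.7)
The plan is to prove (1) via two identifications together with the hypothesis $A = B$, then deduce (2) from (1) via a Koszul reduction. For (1), put $F(-) = \Hom^*_{\cat T}(-,X)$, $G(-) = \Hom^*_{\cat T}(X,-)$, and let $\cat E_F, \cat E_G \subseteq \cat T$ be the thick subcategories on which these functors take values in finite-length $R_\fp$-modules. Two identifications are needed. Since the inclusion $\gam_\fp \cat T\hookrightarrow \cat T_\fp$ admits $\gam_\fp$ as right adjoint and $X\in \cat T_\fp$, one has
\[
\Hom^*_{\cat T}(X,C)\cong\Hom^*_{\cat T}(X,\gam_\fp C)\qquad\text{for every }C\in \cat T,
\]
while the defining property of the Matlis lift gives
\[
\Hom^*_{\cat T}(X,T_\fp C)\cong\bcd\Hom^*_{\cat T}(C,X)\qquad\text{for every }C\in \comp{\cat T},
\]
and Matlis duality preserves finite length on $\fp$-local $\fp$-torsion modules.

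Writing $A=\thick\{\gam_\fp C\mid C\in \comp{\cat T}\}$ and $B=\thick\{T_\fp C\mid C\in \comp{\cat T}\}$, the first identification recasts condition (1)(a') as $\cat E_G\supseteq A$, and the second identification together with Matlis duality recasts condition (1)(a) as $\cat E_G\supseteq B$. Hence under the hypothesis $A = B$,
\[
\text{(a')}\iff \cat E_G\supseteq A\iff \cat E_G\supseteq B\iff \text{(a)},
\]
establishing (1).

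For (2), set $X'\coloneqq\kos X{\fp}$; since $X$ is $\fp$-local and $X'$ is annihilated by $\fp$, we have $X'\in\gam_\fp \cat T$. By Lemma~\ref{le:p-torsion}, $\comp{(\gam_\fp \cat T)}$ is thickly generated by the objects $\kos D{\fp}$ with $D\in \comp{\cat T}$, so condition (2)(b) is equivalent to $\Hom^*_{\cat T}(\kos D{\fp},X)$ having finite length for each compact $D$. By the length symmetry of Lemma~\ref{le:koszul-tt} this is equivalent to $\Hom^*_{\cat T}(D,X')$ having finite length for each compact $D$, i.e.\ condition (1)(a) for $X'$ in place of $X$. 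Symmetrically (2)(b') becomes (1)(a') for $X'$, and part (1) applied to $X'$ finishes the argument. The main delicate step is verifying the coreflection isomorphism for $\gam_\fp$; it reduces to $\Hom^*_{\cat T}(X,C)\cong\Hom^*_{\cat T}(X,C_\fp)$ (from the $R_\fp$-linearity of $G$) and $\Hom^*_{\cat T}(X,C_\fp)\cong\Hom^*_{\cat T}(X,\gam_\fp C)$ (from orthogonality of the $V(\fp)$-torsion object $X$ to $L_{V(\fp)}C_\fp$), after which both parts become straightforward diagram chases through thick subcategories.
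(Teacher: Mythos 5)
Your proof follows essentially the same strategy as the paper's: for part (1), it uses the identification $\Hom^*_{\cat T}(X,T_\fp C)\cong\bcd\Hom^*_{\cat T}(C,X)$ together with Matlis duality preserving finite length, the hypothesis $A=B$, and the "coreflection" identity $\Hom^*_{\cat T}(X,C)\cong\Hom^*_{\cat T}(X,\gam_\fp C)$; for part (2) it reduces to part (1) via Lemma~\ref{le:koszul-tt} and the description of $\comp{(\gam_\fp\cat T)}$ from Lemma~\ref{le:p-torsion}. Recasting the two sides as containments $\cat E_G\supseteq A$ and $\cat E_G\supseteq B$ is a cosmetic repackaging of the argument the paper uses (it makes explicit the implicit use of thickness of $\cat E_G$), and your Koszul step is identical to theirs.

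One thing to tighten in your exposition of the delicate step: you justify $\Hom^*_{\cat T}(X,C)\cong\Hom^*_{\cat T}(X,C_\fp)$ by ``$R_\fp$-linearity of $G$''. But $R_\fp$-linearity of $\Hom^*_{\cat T}(X,-)$ (which follows from $X$ being $\fp$-local) only says the target modules are $R_\fp$-modules; it does not by itself kill $\Hom^*_{\cat T}(X,\gam_{Z(\fp)}C)$, which is the cone you need to vanish. What one gets for free from $X$ being $V(\fp)$-torsion is the cleaner identity $\Hom^*_{\cat T}(X,C)\cong\Hom^*_{\cat T}(X,\gam_{V(\fp)}C)$ (because $L_{V(\fp)}C$ is $V(\fp)$-local); relating this to $\gam_\fp C=(\gam_{V(\fp)}C)_\fp$ requires an additional argument, since $\gam_{Z(\fp)}\gam_{V(\fp)}C$ is generally nonzero and not compact. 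This is the same point the paper glosses over when writing $\Hom^*_{\cat T}(X,\gam_\fp C)\cong\Hom^*_{\cat T}(X,C)$; if one instead reads condition (a$'$) over compacts of $\cat T_\fp$ (which is what is actually used downstream in Corollary~\ref{co:compact-dualisable}), the issue disappears because then $C\cong C_\fp$. You might consider stating that reformulation explicitly rather than invoking $R_\fp$-linearity.
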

\begin{proof}
(1) The $R_\fp$-module $\Hom^*_{\cat T}(C,X)$ has finite length if and only if  
\[
\bcd\Hom^*_{\cat T}(C,X)\cong \Hom^*_{\cat T}(X,T_\fp C)
\] 
has finite length. The assumption on $\cat T$ implies that it is equivalent to the finiteness of the length of
\[
\Hom^*_{\cat T}(X,\gam_\fp C)\cong\Hom^*_{\cat T}(X,C)\,.
\]

(2) One has $\comp{(\gam_\fp\cat T)}=\thick\{\kos{C_\fp}{\fp}\mid C\in\comp{\cat T}\}$, by  Lemma~\ref{le:p-torsion}. It then follows from Lemma~\ref{le:koszul-tt} that the $R_\fp$-module $\Hom^*_{\cat T}(C,X)$ has finite length for each $C\in\comp{(\gam_\fp\cat T)}$ if and only if 
  $\Hom^*_{\cat T}(C,\kos{X}{\fp})$ has finite length for each $C\in\comp{\cat T}$. 
  The same argument applies to $\Hom^*_{\cat T}(X,C)$. So we have reduced to the statement of part (1).
\end{proof}

\subsection*{Descent}
Let  $f \colon \cat T\to \cat U$ be an exact functor between $R$-linear triangulated categories. We say that $f$ is \emph{$R$-linear} if for each object $X$ in $\cat T$ the canonical map 
\[
\End^*_{\cat T}(X)\lra \End^*_{\cat U}(fX)
\]
is $R$-linear. An equivalent condition is that these are  maps of $R$-algebras.

In the following we study adjoint pairs $(f,g)$ of functors between compactly generated  triangulated categories. We  make frequent use of the fact that $f$ preserves compacts if and only if $g$ preserves coproducts.

\begin{proposition}
\label{pr:regularity}
Let $f\colon \cat T\rightleftarrows \cat U\cocolon g$ be an adjoint pair of $R$-linear exact functors
between compactly generated $R$-linear triangulated categories such that both functors preserve products and coproducts. Suppose there exists an integer $d$ such that for each $X\in\cat T$ one has $X\in \thick^d(gfX)$.  If $\cat U$ is locally regular over $R$, then so is $\cat T$.
\end{proposition}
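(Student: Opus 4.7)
Proof plan. Because $f$ preserves products, Brown representability yields a further left adjoint $f^\lambda\colon\cat U\to\cat T$; because $f$ preserves coproducts, $f^\lambda$ takes compact objects to compact objects. The same reasoning shows that $f$ and $g$ themselves preserve compact objects. The functor $f^\lambda$ inherits $R$-linearity from $f$, and is the workhorse of the argument: it allows us to transport cohomological finiteness conditions across $f$.

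For noetherianness of $\cat T$ over $R$: given $X,Y\in\comp{\cat T}$, the adjunction gives $\Hom^*_\cat T(X,gfY)\cong\Hom^*_\cat U(fX,fY)$, a noetherian $R$-module since $\cat U$ is noetherian and $f$ preserves compacts. A level-wise induction using the long exact $\Hom^*$-sequences attached to cones, together with closure of noetherian modules under subquotients and finite sums, propagates noetherianness from $gfY$ through $\thick^d(gfY)$; as $Y\in\thick^d(gfY)$ by hypothesis, $\Hom^*_\cat T(X,Y)$ is noetherian. For a compact generator, fix a compact generator $G_\cat U$ of $\cat U$ and set $G\coloneqq f^\lambda G_\cat U\in\comp{\cat T}$: if $\Hom^*_\cat T(G,X)\cong\Hom^*_\cat U(G_\cat U,fX)=0$ then $fX=0$, so $gfX=0$, and the hypothesis forces $X=0$.

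For strong generation of $\thick(T_\fp G)$ at a fixed $\fp\in\Spec R$: both $f$ and $g$ preserve $\fp$-local and $\fp$-torsion objects (being $R$-linear and preserving coproducts, hence preserving the Koszul constructions and colimits that generate these subcategories), so upon localising at $\fp$ we may assume $R$ is local with maximal ideal $\fp$. A direct Brown representability computation then identifies $T_\fp G\cong gT_\fp G_\cat U$. By local regularity of $\cat U$ together with Proposition~\ref{pr:strong-regular},
\[
\thick(T_\fp G_\cat U)=\art_R\cat U=\thick^n(E)
\]
for some $E$ and $n\ge 1$. The crucial observation is that $f$ preserves artinian objects: for $X\in\art_R\cat T$ and $C'\in\comp{\cat U}$,
\[
\Hom^*_\cat U(C',fX)\cong\Hom^*_\cat T(f^\lambda C',X)
\]
is artinian since $f^\lambda C'\in\comp{\cat T}$. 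Now Lemma~\ref{le:t-artinian} places $T_\fp G$ in $\art_R\cat T$, so any $X\in\thick(T_\fp G)$ is artinian; hence $fX\in\thick^n(E)$, $gfX\in\thick^n(gE)$, and by hypothesis
\[
X\in\thick^d(gfX)\subseteq\thick^{dn}(gE).
\]
Since $gE\in\thick(gT_\fp G_\cat U)=\thick(T_\fp G)$, we conclude $\thick(T_\fp G)=\thick^{dn}(gE)$ is strongly generated.

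The main obstacle is the transfer of artinian objects across $f$, which genuinely requires the further left adjoint $f^\lambda$ to exist and preserve compact objects; this is exactly what the hypothesis that both $f$ and $g$ preserve products and coproducts provides. Everything else reduces to straightforward application of the bound $X\in\thick^d(gfX)$ and the Matlis lifting isomorphism $T_\fp f^\lambda G_\cat U\cong gT_\fp G_\cat U$.
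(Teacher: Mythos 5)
Your proof follows the paper's argument essentially step by step: obtain the further left adjoint $e$ of $f$ from Brown representability, take $e G_{\cat U}$ (for $G_{\cat U}$ a compact generator of $\cat U$) as compact generator of $\cat T$, identify $T_\fp(eG_{\cat U})\cong g(T_\fp G_{\cat U})$ from the representability isomorphism, transfer artinianness across $f$ via the adjunction with $e$, and combine the hypothesis $X\in\thick^d(gfX)$ with the strong generation of $\art_{R_\fp}\cat U_\fp$. Your multiplicative bound $\thick^{dn}$ is the correct composition of $X\in\thick^d(gfX)$ with $gfX\in\thick^n(gE)$ (the paper writes $\thick^{d+n}$, a harmless slip since any finite bound yields strong generation), and you usefully spell out the verification that $f$ preserves artinian objects, which the paper leaves as an assertion.
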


A sufficient condition for $(f,g)$ to preserve products and coproducts is the existence of an equivalence $u\colon\cat U\iso\cat U$ such that $(g,uf)$ is an adjoint pair, because then the pair $(gu,f)$ is adjoint.

\begin{proof}
First observe that $f$ admits a left adjoint $e\colon\cat U\to\cat T$ that preserves compacts. This follows from Brown representability since $f$ preserves products and coproducts. Also, $f$ preserves compacts since $g$ preserves coproducts.

Assume $\cat U$ is noetherian over $R$. By adjunction, the $R$-module $\Hom^*_{\cat T}(C,gfD)$ is noetherian for each pair of compact objects $C,D$ in $\cat T$. Since $D$ is in $\thick(gfD)$ we conclude that that $\Hom^*_{\cat T}(C,D)$ is noetherian. Thus $\cat T$ is noetherian over $R$.

Suppose that $\cat U$ is locally regular with compact generator $G$. Fix $\fp\in\Spec R$. There exists an integer $n$ such that
\[
\thick^n(T_\fp G)=\thick(T_\fp G) = \art_{R_{\fp}}\cat U_\fp\,,
\]
where the first equality is from the definition of local regularity and the second one is by Propostion~\ref{pr:strong-regular}. We claim
\begin{enumerate}
    \item  
    $eG$ is a compact generator for $\cat T$,
    \item 
    $g(T_\fp G)\cong T_\fp(eG)$, and
    \item
    $\thick^{d+n}(T_\fp(eG))=\thick(T_\fp (eG))$,
\end{enumerate}  
which would prove that $\cat T$ is locally regular over $R$, as desired.

Indeed, (1) holds because $e$ preserves compactness and for each $X\in \cat T$ one has
an isomorphism of $R$-modules
\[
\Hom^*_{\cat U}(G,fX)  \cong \Hom_{\cat T}(eG,X)\,.
\]
This isomorphism also yields isomorphisms
 \begin{align*}
    \Hom^*_{\cat T}(X,g(T_\fp G)) 
        &\cong \Hom^*_{\cat U}(fX, T_\fp G) \\
        &\cong \Hom_R(\Hom^*_{\cat U}(G,fX), I(\fp)) \\
        &\cong \Hom_R(\Hom^*_{\cat T}(eG,X), I(\fp))\,.
\end{align*}
 Therefore $g(T_\fp G)\cong T_\fp(eG)$ holds, which justifies (2).

 (3) Given Lemma~\ref{le:t-artinian} it suffices to prove that each artinian object $X$ in $\cat T_\fp$ is in $\thick^{d+n}(T_\fp(eG))$. It is easy to verify that $fX$ is in 
   $\art_{R_\fp}\cat U_\fp$, and hence in $\thick^n(T_\fp G)$. Thus we get that
\[
 X\in \thick^d(gfX) \subseteq \thick^{d+n}(g(T_\fp G))=\thick^{d+n}(T_\fp(eG))\,,
\]
where the equality is by (2). This justifies (3).
\end{proof}

\subsection*{Change of rings}
Let  $\varphi\colon R\to S$ be a finite morphism of graded commutative noetherian rings. The following lemma shows that local regularity is preserved when the linear action on a triangulated category is changed via $\varphi$.

\begin{lemma}
\label{le:change-of-rings}
Let $\cat T$ be a compactly generated $S$-linear triangulated category. Then one has $\art_R\cat T=\art_S\cat T$, and for each $\fp\in \Spec R$ and compact object $C$ in $\cat T$ there is an isomorphism
\[
T_{\fp}C\cong \bigoplus_{\substack{\fq\in\Spec S\\ \varphi^{-1}(\fq)=\fp}} T_\fq C\,.
\]
Moreover $\cat T$ is  locally regular over $R$ if and only if the same holds over $S$.
\end{lemma}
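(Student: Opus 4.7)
The plan is to attack the three assertions in the order stated: the comparison of finiteness conditions, the direct sum decomposition of the Matlis lift, and finally the local regularity equivalence.

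For part (1), the key observation is that any $S$-module $M$ is noetherian, artinian or of finite length over $S$ if and only if it is so over $R$, because $\varphi\colon R\to S$ is finite. One direction is clear ($R$-submodules and $S$-submodules of $M$ are the same classes if $M$ is $R$-finite versus $S$-finite via $\varphi$), and the other uses that $S$ is a finitely generated $R$-module so any $S$-finitely generated submodule is also $R$-finitely generated. Applied with $M=\Hom^*_{\cat T}(C,X)$ for $C$ compact this gives $\art_R\cat T=\art_S\cat T$ and the analogous fact for noetherian objects, and ensures that $\cat T$ is noetherian over $R$ if and only if it is noetherian over $S$.

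For part (2), the main classical input is the Matlis-theoretic identity
\[
\Hom_R(S,I_R(\fp))\;\cong\;\bigoplus_{\substack{\fq\in\Spec S\\ \varphi^{-1}(\fq)=\fp}} I_S(\fq)
\]
as $S$-modules. This is obtained by localising at $\fp$, using that finiteness of $\varphi$ forces $S_\fp\cong\prod_{\fq\mid\fp} S_\fq$ (a finite product of graded local rings with residue fields lying over $k(\fp)$), and then applying Matlis duality componentwise. Granting this, the decomposition of $T_\fp C$ is a routine Yoneda argument: for any $X\in\cat T$ one uses that $\Hom^*_{\cat T}(C,X)$ carries an $S$-action to write
\begin{align*}
\Hom_{\cat T}(X,T_\fp C)
&\cong \Hom_R(\Hom^*_{\cat T}(C,X),I_R(\fp))\\
&\cong \Hom_S\bigl(\Hom^*_{\cat T}(C,X),\textstyle\bigoplus_{\fq\mid\fp} I_S(\fq)\bigr)\\
&\cong \textstyle\bigoplus_{\fq\mid\fp}\Hom_{\cat T}(X,T_\fq C)\,,
\end{align*}
the direct sum being finite. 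Yoneda and compact generation of $\cat T$ then produce the claimed isomorphism in $\cat T$.

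Part (3) is the main obstacle, and the bridge between the two linear structures is the decomposition of $\gam^{(R)}_\fp\cat T$ along the fibre over $\fp$. Concretely, the semilocal decomposition $S_\fp\cong\prod_{\fq\mid\fp} S_\fq$ supplies orthogonal idempotents acting on any $X\in\gam^{(R)}_\fp\cat T$ via the $S$-action, giving a natural direct sum $X\cong\bigoplus_{\fq\mid\fp}\gam^{(S)}_\fq X$, with the property that $\gam^{(S)}_\fq T_{\fq'}C=0$ for $\fq\ne\fq'$ over $\fp$. In particular, $\gam^{(S)}_\fq T_\fp G\cong T_\fq G$, each $T_\fq G$ is a direct summand of $T_\fp G$, and the functors $\gam^{(S)}_\fq$ preserve $\thick^n$. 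Combining these with part (2), if $\thick(T_\fp G)=\thick^n(T_\fp G)$ then applying $\gam^{(S)}_\fq$ to any $X\in\thick(T_\fq G)\subseteq\thick(T_\fp G)$ gives $X=\gam^{(S)}_\fq X\in\thick^n(T_\fq G)$; conversely, if each $\thick(T_\fq G)=\thick^{n_\fq}(T_\fq G)$, any $X\in\thick(T_\fp G)$ decomposes as $\bigoplus_\fq X_{(\fq)}$ with $X_{(\fq)}\in\thick(T_\fq G)\subseteq\thick^{n_\fq}(T_\fp G)$ (using $T_\fq G\in\thick^1(T_\fp G)$), so $X\in\thick^{\max n_\fq}(T_\fp G)$ as $\thick^n$ is closed under finite direct sums. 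Together with part (1), which equates the noetherianity hypotheses and the compact generators, this establishes local regularity over $R$ if and only if over $S$.
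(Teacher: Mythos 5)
Your plan follows essentially the same route as the paper's proof, and the conclusions you reach are correct, but the justification you give for the two central decompositions rests on a false ring-theoretic claim. You assert that finiteness of $\varphi$ forces a ring isomorphism $S_\fp\cong\prod_{\fq\mid\fp} S_\fq$, and you invoke this both to derive the Matlis identity $\Hom_R(S,I(\fp))\cong\bigoplus_{\fq\mid\fp}I(\fq)$ and to supply the ``orthogonal idempotents'' giving the direct sum $X\cong\bigoplus_{\fq\mid\fp}\gam_\fq X$ for $X\in\gam_\fp\cat T$. That ring isomorphism is false in general: $S_\fp$ is a semilocal ring but usually not a product. For instance, if $S$ is a domain then $S_\fp$ is a domain and hence cannot decompose as a nontrivial product, yet there may well be several primes of $S$ lying over $\fp$.

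What is actually true, and what drives both decompositions, is that $S_\fp/(\fp S_\fp)^n$ is artinian for every $n$ and \emph{it} decomposes as a finite product of local rings; equivalently, any $\fp$-torsion $S$-module splits into its $\fq_i$-primary components because the maximal ideals $\fq_1,\dots,\fq_r$ in the fibre over $\fp$ are pairwise coprime in $S_\fp$. The paper handles this by citing a reference for the Matlis identity, and by obtaining the triangulated decomposition $\gam_\fp X\cong\gam_{\fq_1}X\oplus\cdots\oplus\gam_{\fq_r}X$ directly from support theory: after localising at $\fp$, the $\fq_i$ are the finitely many closed points in the fibre, they are pairwise incomparable, so the local cohomology functors at these points are orthogonal and their sum is $\gam_\fp$. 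If you replace your appeal to the nonexistent ring decomposition with one of these arguments, the rest of your proof --- including the $\thick^n$ bookkeeping in part~(3), which matches the paper's argument in both directions --- goes through as described.
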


\begin{proof}
The first assertion is clear, because an $S$-module is artinian if and only if it is artinian when viewed as an $R$-module via restriction of scalars along $\varphi$.
One has an isomorphism of $S$-modules 
\[
\Hom_R(S,I(\fp)) \cong \bigoplus_{\substack{\fq\in\Spec S\\ \varphi^{-1}(\fq)=\fp}} I(\fq)\,.
\]
See, for instance, \cite[Theorem~1.1]{Rahmati:2009a}. Combining it with the adjunction isomorphism
\[
\Hom_R(\Hom^*_\cat T(C,-),I(\fp))\cong\Hom_S(\Hom^*_\cat T(C,-),\Hom_R(S,I(\fp)))
\]
and the defining isomorphism of $T_\fp C$, yields the isomorphism concerning $T_\fp C$.

Let $\fp\in\Spec R$. We localise at $\fp$ and may assume $R$ is local with maximal ideal $\fp$. Consider the induced map $\Spec S\to \Spec R$. Since $R\to S$ is finite, the fibre over $\fp$ consists of a finite collection of maximal ideals, say $\fq_1,\dots,\fq_r$. It follows that any object $X$ in $\cat T$ admits a functorial decomposition
    \[
    \gam_\fp X = \gam_{\fq_1}X \oplus \cdots \oplus \gam_{\fq_r}X\,.
    \]
In particular, any exact triangle supported at $\fp$ decomposes into a direct sum of exact triangles supported at the $\fq_i$.    
When applied to $T_\fp C$ for a compact object $C$, one obtains the above decomposition $T_\fp C\cong T_{\fq_1}C \oplus \cdots \oplus T_{\fq_r}C$. Here, one uses that $T_\fp C$ is in $\gam_\fp\cat T$, so $\gam_\fp T_\fp=T_\fp$.

Now suppose $\cat T$ is locally regular over $S$, with compact generator $G$. This yields integers $n_i$ such that
\[
\thick^{n_i}(T_{\fq_i} G)=\thick(T_{\fq_i} G)\qquad (i=1,\ldots,r)\,.
\]
Using the above decomposition $\gam_\fp=\bigoplus_i\gam_{\fq_i}$, each object in $\thick(T_{\fp} G)$ decomposes into objects from the categories $\thick(T_{\fq_i} G)$.
Hence $\thick^{n}(T_{\fp} G)=\thick(T_{\fp} G)$ for $n=\max(n_1,\ldots,n_r)$, and $\cat T$ is regular over $R$. 

For the converse suppose that $\cat T$ is locally regular over $R$, with compact generator $G$, and fix $\fq\in\Spec S$. Set $\fp=\varphi^{-1}(\fq)$. As above, we can localise at $\fp$ and assume $R$ is local with maximal ideal $\fp$, and that $\fq$ lies over $\fp$. It is easy to verify that 
\[
\art_{S_\fq}\cat T_\fq\subseteq \art_S \cat T = \art_R\cat T\,.
\]
It follows from Proposition~\ref{pr:strong-regular} that each artinian object $X$ in the $S_\fq$-linear category $\cat T_\fq$ belongs to $\thick^n(T_\fp G)$ for some integer $n$. The decomposition of $T_\fp G$ noted above implies that $\gam_\fq T_\fp G=T_\fq G$, so applying the functor $\gam_{\fq}$ yields that $X$ belongs to $\thick^n(T_\fq G)$. Thus $\cat T$ is locally regular over $S$.
\end{proof}

\subsection*{Regular endomorphism rings}
Let $\cat T$ be a compactly generated triangulated category and $G$ a compact generator. Set $A=\End^*_\cat T(G)$ and consider the cohomological functor 
\[
H\colon\cat T\lra \Mod A,\quad\text{where}\quad X\mapsto \Hom^*_\cat T(G,X).
\]
For an injective $A$-module $E$ let $T(E)$ denote the object in $\cat T$ such that
\[
\Hom_A(H(X),E)\cong\Hom_\cat T(X,T(E)).
\]
The existence and uniqueness of $T(E)$ follows from Brown representability.

The result below uses Adams resolutions and is well known; see for instance \cite{Miller:1981a}.

\begin{lemma}
\label{le:adams}
Let $X\in\cat T$ be an object such that the $A$-module $H(X)$ admits an injective resolution $\eta\colon H(X)\to I$ where $I^s=0$ for $s> n$. Then $X$ belongs to the triangulated subcategory of $\cat T$ generated by $T(I^0),\ldots,T(I^n)$.
\end{lemma}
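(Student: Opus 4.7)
The plan is to proceed by induction on the length $n$ of the injective resolution, peeling off one layer at a time via the adjunction that defines $T$.

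For the base case $n=0$, the monomorphism $\eta$ is already an isomorphism. I would use the defining isomorphism $\Hom_A(H(-),I^0)\cong\Hom_{\cat T}(-,T(I^0))$ to lift $\eta$ to a morphism $\tilde\eta\colon X\to T(I^0)$. A direct computation plugging $\Si^{-s}G$ into the defining isomorphism yields $H(T(E))\cong E$ for every injective graded $A$-module $E$, so $H(\tilde\eta)$ agrees with $\eta$ and is in particular an isomorphism. Since $G$ is a compact generator, this forces $\tilde\eta$ itself to be an isomorphism, placing $X$ in the triangulated subcategory generated by $T(I^0)$.

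For the inductive step, I would lift the monomorphism $\eta$ to $\tilde\eta\colon X\to T(I^0)$ via the same adjunction and complete it to a fibre triangle
\[
W \lra X \xra{\tilde\eta} T(I^0) \lra \Si W\,.
\]
Applying the cohomological functor $H$ to this triangle yields a long exact sequence of graded $A$-modules. Using the identification $H(T(I^0))\cong I^0$ together with the injectivity of $\eta$, the segment at $H(T(I^0))$ produces an isomorphism $H(\Si W)\cong I^0/\eta(H(X))$. Because the resolution $0\to H(X)\to I^0\to I^1\to\cdots\to I^n\to 0$ is exact, the truncated sequence $0\to I^0/\eta(H(X))\to I^1\to\cdots\to I^n\to 0$ is an injective resolution of $H(\Si W)$ of length $n-1$. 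By the induction hypothesis, $\Si W$, and therefore $W$, lies in the triangulated subcategory generated by $T(I^1),\ldots,T(I^n)$; the triangle then places $X$ in the triangulated subcategory generated by $T(I^0),T(I^1),\ldots,T(I^n)$.

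The main technical point, and the one I expect to require the most care, is the identification of the connecting map in the long exact sequence: it must realise $H(\Si W)$ as the cokernel $I^0/\eta(H(X))$ in a way compatible with the differential $I^0\to I^1$, so that the truncated sequence is genuinely an injective resolution of $H(\Si W)$ rather than just an exact sequence of modules. Beyond this bookkeeping, the argument is formal, resting on the defining adjunction for $T$ together with $G$ being a compact generator of $\cat T$.
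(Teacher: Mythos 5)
Your proof is correct and follows essentially the same argument as the paper: lift $\eta$ via the defining adjunction of $T$, handle $n=0$ by observing that $H(\widetilde\eta)$ is an isomorphism and $G$ generates, and for $n\ge 1$ complete $\widetilde\eta$ to a triangle and induct on the cone (your $\Si W$ is the paper's $X'$). The technical worry you flag is a non-issue: you only need $H(\Si W)\cong I^0/\eta(H(X))$ as $A$-modules, since composing any such isomorphism with the induced map $I^0/\eta(H(X))\to I^1$ produces the required injective resolution of length $n-1$ with the same injective objects $I^1,\ldots,I^n$.
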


\begin{proof}
The argument is an induction on $n$. The morphism $\eta\colon H(X)\to I^0$ corresponds to a morphism $\widetilde \eta\colon X\to T(I^0)$.
If $n=0$, then $\eta$ is an isomorphism. It follows that $\Hom_{\cat T}^*(G,\widetilde\eta)$ is an isomorphism, and hence so is $\widetilde\eta$, because $G$ generates $\cat T$. 

Assume $n\ge 1$ and let $X'$ denote the cone of the morphism $\widetilde\eta$. This yields an exact sequence $0\to H(X)\to I^0\to H(X')\to 0$. The induction hypothesis implies that $X'$ belongs to the triangulated subcategory generated by $T(I^1),\ldots,T(I^n)$. Thus $X$ belongs to the triangulated subcategory generated by $T(I^0),\ldots,T(I^n)$.
\end{proof}

\begin{theorem}
\label{th:reg-end}  
Let $\cat T$ be a compactly generated $R$-linear category that is noetherian. If there exists a compact generator $G$ for $\cat T$ such that $\End^*_{\cat T}(G)$ has finite global dimension, then $\cat T$ is locally regular over $R$.   
\end{theorem}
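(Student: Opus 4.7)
The plan is to fix $\fp\in\Spec R$ and produce an integer $N$ such that $\thick(T_\fp G)=\thick^N(T_\fp G)$; this will exhibit $T_\fp G$ as a strong generator of $\thick(T_\fp G)$ and verify local regularity. Writing $A\coloneqq\End^*_\cat T(G)$ and letting $n$ denote its global dimension, I expect $N=n+1$ to work, with Lemma~\ref{le:adams} providing the main input.

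First I would observe that $T_\fp G\in\gam_\fp\cat T\cap\art_R\cat T$: since $\cat T$ is noetherian, $\Hom^*_\cat T(C,G)$ is a finitely generated $R$-module for each compact $C$, so the isomorphism
\[
\Hom^*_\cat T(C,T_\fp G)\cong\Hom_R(\Hom^*_\cat T(C,G),I(\fp))
\]
together with Matlis duality shows this module is $\fp$-torsion and artinian over $R_\fp$. Consequently, any $X\in\thick(T_\fp G)$ also lies in $\gam_\fp\cat T\cap\art_R\cat T$, and so $M\coloneqq H(X)=\Hom^*_\cat T(G,X)$ is an $\fp$-torsion artinian $A$-module. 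Using $\operatorname{gldim}A\leq n$, I would then choose an injective resolution $0\to M\to I^0\to\cdots\to I^n\to 0$ in $\Mod A$, and unwind the inductive proof of Lemma~\ref{le:adams}: each step in the induction adds one to the thickness level, so the sharper conclusion is $X\in\thick^{n+1}(T(I^0)\oplus\cdots\oplus T(I^n))$.

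The crux is then to show that each $T(I^s)$ lies in $\thick^1(T_\fp G)=\add\{\Si^i T_\fp G\mid i\in\bbZ\}$. For this I would argue that the injective hull in $\Mod A$ of any artinian $\fp$-torsion $A$-module is a finite direct sum of shifts of indecomposable injective $A$-modules supported at $\fp$, and that each such indecomposable is a direct summand of $E_\fp\coloneqq\Hom_R(A,I(\fp))$. Since $T(E_\fp)=T_\fp G$, and since $T$ sends finite direct sums and direct summands in $\Inj A$ to the corresponding constructions in $\cat T$, it follows that $T(I^s)\in\thick^1(T_\fp G)$. Because the category of artinian $\fp$-torsion $A$-modules is closed under cokernels and extensions, the resolution can be constructed termwise within this subcategory, so the same statement applies to $I^1,\ldots,I^n$. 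Combining everything, $X\in\thick^{n+1}(T_\fp G)$ uniformly, which gives the desired strong generation.

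The hardest step is the structural claim about injective hulls. In the classical commutative noetherian setting this is Matlis's theorem on injective envelopes of artinian modules, but here $A$ is only a graded $R$-algebra that is finite as an $R$-module and need not be commutative. I would address this by exploiting the adjunction between restriction along $R\to A$ and $\Hom_R(A,-)$: the right adjoint sends injective $R$-modules to injective $A$-modules, so $E_\fp$ is injective as an $A$-module. A socle computation via Frobenius reciprocity shows that every simple $A$-module whose $R$-support is $\{\fp\}$ embeds into $E_\fp$, forcing every indecomposable injective $A$-module supported at $\fp$ to appear as a direct summand of $E_\fp$.
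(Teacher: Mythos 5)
Your proposal is substantially correct and follows the same overall strategy as the paper: express $M=\Hom^*_\cat T(G,X)$ as admitting a bounded injective resolution over $A=\End^*_\cat T(G)$ by artinian objects, then invoke Lemma~\ref{le:adams} with $N=n+1$. The differences are in how you establish that the terms of the resolution lie in $\add(T_\fp G)$.

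The paper first localises at $\fp$ (noting that $\End^*_{\cat T_\fp}(G_\fp)\cong A\otimes_R R_\fp$ still has finite global dimension), sets $J=\Hom_R(A,I(\fp))$, and then builds the resolution \emph{directly} by iterating the elementary observation: an artinian $R$-module embeds into a finite sum of copies of $I(\fp)$, hence by adjunction embeds, $A$-linearly, into a finite sum of copies of $J$; the cokernel is again artinian, so repeat. After $n$ steps the cosyzygy is injective and hence a direct summand of a finite sum of copies of $J$. Thus every term of the resolution lies in $\add(J)$ automatically, without any structure theory of injective $A$-modules.

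Your route instead takes an arbitrary injective resolution, decomposes each term into indecomposables, and argues via socles that each indecomposable injective ``supported at $\fp$'' is a summand of $E_\fp=J$. This is correct but carries hidden obligations you only partially discharge: that artinian $A$-modules have essential socle (true, since any nonzero submodule contains a simple); that an indecomposable injective which is artinian over $R$ is the hull of a simple; and — the most delicate point you underemphasise — that the resolution \emph{can} be chosen so that every term is artinian over $R_\fp$. You assert this by saying artinian $\fp$-torsion modules are closed under cokernels and extensions, but that does not by itself place the injective hull inside the subcategory; what does it is precisely the observation that any artinian module embeds $A$-linearly into a finite sum of copies of $J$ (which is itself artinian over $R_\fp$ because $A$ is finite over $R$), so the hull is a summand of such a sum. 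Once you say that, however, you have already reproduced the paper's direct construction and the detour through indecomposable injectives becomes redundant. So the proposal works, but the paper's argument is both shorter and avoids appealing to Gabriel–Matlis decomposition over the (graded, possibly noncommutative) ring $A$.

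Two minor points worth tightening: you should either localise at $\fp$ at the outset, as the paper does, or be explicit that ``artinian'' and ``$\fp$-torsion'' throughout mean over $R_\fp$ after localisation; and when you ``unwind the induction in Lemma~\ref{le:adams} to get the sharper $\thick^{n+1}$ bound,'' note that the paper's formulation of the lemma already yields membership in a triangulated subcategory generated by $n+1$ objects, and the counting into $\thick^{n+1}$ is exactly the point you need to make explicit — it is right, but deserves a sentence.
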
 

\begin{proof}   
Set $A=\End^*_{\cat T}(G)$ and fix $\fp \in \Spec R$. Then $G_\fp$ is a compact generator for $\cat T_\fp$, and one has an isomorphism of rings
\[
\End^*_{\cat T_\fp}(G_\fp)\cong \End^*_{\cat T}(G)_\fp =  A\otimes_R R_\fp\,.
\]
Since $A\otimes_R R_\fp$ is also of finite global dimension, we can replace $R$ and $\cat T$ by their localisations at $\fp$ and assume $R$ is local, with maximal ideal $\fp$. The $A$-module $J=\Hom_R(A,I(\fp))$ is injective, and it is straightforward to verify that $T_\fp G = T(J)$. The desired result is thus that $\thick(T(J))$ has a strong generator.  Since $\cat T$ is noetherian, arguing as in the proof of Lemma~\ref{le:t-artinian} one deduces that $T(J)$, and hence any object in $\thick(T(J))$, is artinian.

Let $X$ be any artinian object in $\cat T$ and set $M=\Hom^*_{\cat T}(G,X)$. Then $M$ is artinian as an $R$-module, so it embeds into a finite direct sum of copies of $I(\fp)$. Thus as an $A$-module $M$ embeds into a finite direct sum of copies of $J$. The cokernel of such an embedding is again artinian over $R$, and we obtain therefore an injective resolution of $M$ over $A$ involving in each degree finite direct sums of copies of $J$ and their direct summands. Thus, if the global dimension of $A$ is bounded by $n$, then $X$ belongs to $\thick^{n+1}(T(J))$, as desired; see Lemma~\ref{le:adams} and its proof. 
\end{proof}

\section{Local regularity}
\label{se:regularity}

In this section we consider a rigidly compactly generated tensor triangulated category $\cat T=(\cat T,\otimes,\one)$ and introduce in this setting a notion of local regularity. Then we are able to describe for each point its dualisable objects.

\begin{definition}
\label{def:locally}
Set $R=\End^*_{\cat T}(\one)$. We say $\cat T$ is \emph{noetherian} to mean that $\cat T$ is noetherian with respect to the canonical $R$-action, so the graded $R$-module $\Hom^*_{\cat T}(C,D)$ is noetherian for all compact objects $C,D$ in $\cat T$. In particular, the graded ring $R$, which is always commutative, is noetherian. 

We say that $\cat T$ is \emph{locally regular} if it is noetherian and admits a compact generator $G$ such that $\thick(\gam_\fp G)$ is strongly generated for each $\fp\in\Spec R$.
\end{definition}

For any pair $G,G'$ of compact generators, one has $\thick(\gam_\fp G)=\thick(\gam_\fp G')$ for each $\fp\in\Spec R$. Thus the definition of local regularity does not depend on the choice of a generator. 

The result below reconciles this notion of local regularity with the one introduced in Section~\ref{se:strong-gen} for triangulated categories with linear actions. For tensor triangulated categories, the definition above is more intrinsic. In fact, the linear action of $\End^*_{\cat T}(\one)$ plays a major role in our arguments. If this ring is too small, then the definition ought to be formulated in terms of prime ideals of the tensor triangulated category $\comp{\cat T}$. An example to keep in mind is the derived category of a non-affine scheme. Another such example from representation theory is the derived category of a Schur algebra, with subcategory of compact objects equivalent to the homotopy category of Young permutation modules. 

\begin{lemma}
\label{le:reg-bcd+dg}
Let $\cat T$ be a rigidly compactly generated tensor triangulated category. Assume $\cat T$ is noetherian and has a compact generator $G$. Then the local regularity of $\cat T$ is equivalent to strong generation of any one, equivalently, all of the triangulated categories:
 $\thick(T_\fp G)$,  $\thick(\bcd G)$ and $\thick(\lam^\fp G_\fp)$, for each $\fp\in\Spec R$.
\end{lemma}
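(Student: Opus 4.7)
The plan is to reduce everything to Corollary~\ref{co:bcd+dg}, which already supplies triangle equivalences (one contravariant, via $D\phat$, and one covariant, via the Dwyer--Greenlees pair $(\gam_\fp,\lam^\fp)$) linking
\[
\thick(T_\fp(\comp{\cat T}))^{\op}=\thick(\bcd(\comp{\cat T}_\fp))^{\op},\quad \thick(\lam^\fp(\comp{\cat T}_\fp)),\quad \thick(\gam_\fp(\comp{\cat T}_\fp)).
\]
Strong generation is a property transported by any triangle equivalence and is manifestly invariant under $(-)^{\op}$, since $\thick^n(X)$ is defined symmetrically via cones and direct summands. So once each of the four subcategories in the statement is identified as the thick closure of the image of $G$ (or $G_\fp$) under the relevant functor on $\comp{\cat T}$ (or $\comp{(\cat T_\fp)}$), strong generation will transfer between them.

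First, I would record the bookkeeping on generators. The hypothesis gives $\thick(G)=\comp{\cat T}$, and Lemma~\ref{le:gam-loc-V} together with the fact that $L_{Z(\fp)}$ preserves compacts yields $\thick(G_\fp)=\comp{(\cat T_\fp)}$. Since $T_\fp$ is contravariant exact on $\comp{\cat T}$, and $\gam_\fp$, $\lam^\fp$ are covariant exact, these equalities propagate to
\[
\thick(T_\fp G)=\thick(T_\fp(\comp{\cat T})),\quad \thick(\gam_\fp G)=\thick(\gam_\fp(\comp{\cat T}_\fp)),\quad \thick(\lam^\fp G_\fp)=\thick(\lam^\fp(\comp{\cat T}_\fp)).
\]
Applying Corollary~\ref{co:bcd+dg} then links the three, so strong generation of any one is equivalent to strong generation of the others.

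To fit $\thick(\bcd G)$ into the picture, I would use Lemma~\ref{le:Tp-tensor} and the formula $\bcd C\cong \swd C\otimes \bcd\one$ for compact $C$, together with $\bcd\one\cong T_\fp\one$, to get $\bcd G\cong T_\fp(\swd G)$. Since Spanier--Whitehead duality is an auto-anti-equivalence of $\comp{\cat T}$, the object $\swd G$ is again a compact generator, hence
\[
\thick(\bcd G)=\thick(T_\fp(\swd G))=\thick(T_\fp(\comp{\cat T}))=\thick(T_\fp G).
\]
Finally, since local regularity (Definition~\ref{def:locally}) is by definition the noetherian hypothesis (which is already assumed in the statement) together with strong generation of $\thick(\gam_\fp G)$ for each $\fp\in\Spec R$, the preceding chain of equivalences lets me replace that condition by the analogous one for $\thick(T_\fp G)$, $\thick(\bcd G)$, or $\thick(\lam^\fp G_\fp)$.

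There is no serious obstacle: once Corollary~\ref{co:bcd+dg} is granted, the proof is essentially identification of generators and the mild remark that the $^{\op}$ appearing in one leg of that corollary is harmless for strong generation. The only point requiring care is the identification $\bcd G\cong T_\fp(\swd G)$, which relies on dualisability of $G$ and on $\bcd\one$ being a Matlis lift of $\one$.
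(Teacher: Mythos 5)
Your proof is correct and fills in exactly the details the paper leaves implicit when it cites Corollary~\ref{co:bcd+dg}: identifying each of the four thick subcategories as $\thick$ of the image of the generator, transporting strong generation across the equivalences of that corollary, and noting that passing to $(-)^{\op}$ is harmless. One small slip: $T_\fp$ is a \emph{covariant} exact functor on $\comp{\cat T}$ (clear from $T_\fp C\cong C\otimes T_\fp\one$), not contravariant as you wrote; this does not affect the argument, since covariant exactness suffices for the propagation $\thick(T_\fp G)=\thick(T_\fp(\comp{\cat T}))$. Likewise, the parenthetical that ``$L_{Z(\fp)}$ preserves compacts'' is not literally true in $\cat T$; the precise statement from Lemma~\ref{le:gam-loc-V} is that $L_{Z(\fp)}$ carries $\comp{\cat T}$ into $\comp{(\cat T_\fp)}$ up to direct summands, which still yields $\thick(G_\fp)=\comp{(\cat T_\fp)}$ as you use.
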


\begin{proof}
This follows from Corollary~\ref{co:bcd+dg}.
\end{proof}

\subsection*{Examples}

The definition of local regularity is motivated by the following example. For a noetherian ring $A$ we write  $\bfD(\Mod A)$ for its full derived category; it is a rigidly compactly generated tensor triangulated category.

\begin{proposition}\label{pr:comm-regular}
Let $A$ a commutative noetherian ring. Then the ring $A$ is regular
if and only if the tensor triangulated category $\bfD(\Mod A)$ is locally regular.
 \end{proposition}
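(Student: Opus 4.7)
The category $\cat T = \bfD(\Mod A)$ is rigidly compactly generated with tensor unit $A$, and the graded endomorphism ring $\End^*_\cat T(A) = A$ is noetherian by hypothesis; hence $\cat T$ is noetherian over $R = A$ in the sense of Definition~\ref{def:locally}. I will apply Lemma~\ref{le:reg-bcd+dg} with compact generator $G = A$ to recast local regularity of $\cat T$ as strong generation of $\thick_\cat T(T_\fp A)$ for every $\fp \in \Spec A$.

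A direct verification identifies $T_\fp A$ with the injective hull $I(\fp) = E(A/\fp)$, placed in cohomological degree zero: $I(\fp)$ is $A$-injective (standard, since $I(\fp)$ is $A_\fp$-injective and injectivity descends along the flat map $A \to A_\fp$), so $\Hom_{\bfD(A)}(X, I(\fp)[n]) = \Hom_A(H^{-n}(X), I(\fp))$, matching the representability property defining $T_\fp A$. I then claim $\thick_\cat T(I(\fp)) = \thick_{\bfD^{\mathrm b}(\art A_\fp)}(I(\fp))$. On the one hand, $I(\fp)$ is an artinian $A_\fp$-module, and the property of having bounded artinian cohomology is preserved under cones, suspensions, and direct summands, so the thick closure stays inside $\bfD^{\mathrm b}(\art A_\fp)$. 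On the other hand, every $A$-linear map between artinian $A_\fp$-modules is automatically $A\phat$-linear (such modules are $\fp$-power torsion, and the $A_\fp$-action factors through $A\phat/\fp^n A\phat$), while $A$-injectivity of $I(\fp)$ forces higher Ext groups with target $I(\fp)$ to vanish; together these imply that morphisms in $\bfD(\Mod A)$ between objects of $\bfD^{\mathrm b}(\art A_\fp)$ agree with those computed internally, so the two thick closures coincide.

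Every artinian $A_\fp$-module embeds into a finite direct sum of copies of $I(\fp)$ by the finite-dimensionality of the socle over $k(\fp)$, so the hypothesis of Example~\ref{ex:gldim} is met. That example identifies strong generation of $\thick_{\bfD^{\mathrm b}(\art A_\fp)}(I(\fp))$ with the abelian category $\art A_\fp$ having finite global dimension. Matlis duality provides an anti-equivalence $\art A_\fp \simeq (\noeth A\phat)^{\op}$, so this is equivalent to $A\phat$ being of finite global dimension, and by Auslander--Buchsbaum--Serre to $A\phat$ being regular. Finally, $A\phat$ is regular if and only if $A_\fp$ is regular, since the completion $A_\fp \to A\phat$ is faithfully flat and regularity descends and ascends along faithfully flat maps between noetherian local rings. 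Running this for every $\fp \in \Spec A$, local regularity of $\bfD(\Mod A)$ is equivalent to $A_\fp$ being regular for every $\fp$, i.e., to $A$ being regular.

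The main obstacle is the identification $\thick_\cat T(I(\fp)) = \thick_{\bfD^{\mathrm b}(\art A_\fp)}(I(\fp))$, which requires carefully using both the automatic $A\phat$-linearity of maps between artinian $A_\fp$-modules and the $A$-injectivity of $I(\fp)$ to ensure that the embedding $\bfD^{\mathrm b}(\art A_\fp) \hookrightarrow \bfD(\Mod A)$ is fully faithful on the relevant objects; once this is in place, Matlis duality and Auslander--Buchsbaum--Serre do the rest.
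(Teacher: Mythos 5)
Your proposal is correct and follows the same overall skeleton as the paper: reduce via Lemma~\ref{le:reg-bcd+dg} to strong generation of $\thick(T_\fp A)$, identify $T_\fp A$ with $I(\fp)$, identify the thick closure with the bounded derived category of artinian modules, and invoke Example~\ref{ex:gldim} to translate strong generation into finite global dimension of the artinian category. Two differences are worth noting. First, the paper localises to $A_\fp$ at the outset (using $\bfD(\Mod A)_\fp\simeq\bfD(\Mod A_\fp)$, so that it is genuinely working with $\art A$ for $A$ local), whereas you keep artinian $A_\fp$-modules as a Serre subcategory of $\Mod A$; both work, but the paper's reduction makes the identification $\art_A\bfD(\Mod A)=\bfD^{\mathrm b}(\art A)$ cleaner. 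Second, the paper closes the argument by applying Auslander--Buchsbaum--Serre directly to $\art A$, observing that finite global dimension of $\art A$ is detected by $\Ext^n_A(k,k)$ vanishing for $n\gg 0$; you instead pass through Matlis duality to $\noeth A\phat$, conclude that $A\phat$ has finite global dimension, and then transfer regularity back to $A_\fp$ by faithfully flat descent. Your route is longer but equally valid, and uses the duality theme that runs through the rest of the paper. One small critique: your justification of the full faithfulness of $\bfD^{\mathrm b}(\art A_\fp)\hookrightarrow\bfD(\Mod A)$ via ``automatic $A\phat$-linearity'' is not quite the sharpest tool. The cleaner observation is that $\art A_\fp$ is a Serre subcategory of $\Mod A$ that is closed under injective hulls (injective hulls of artinian $A_\fp$-modules are finite direct sums of copies of $I(\fp)$, which is $A$-injective), and since $A\to A_\fp$ is an epimorphism of rings, $\Hom_A$ and $\Hom_{A_\fp}$ agree on $A_\fp$-modules; together these give the Ext comparison directly, with no need to pass to the completion.
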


\begin{proof}
Local regularity is a local condition. For each $\fp\in\Spec A$ we have 
\[
\bfD(\Mod A)_\fp\iso\bfD(\Mod A_\fp)\,.
\]
Thus we may assume $A$ is local with maximal ideal $\fp$; set $k=A/\fp$.

The triangulated category $\bfD(\Mod A)$ has a compact generator, namely, $A$, and is noetherian as an $A$-linear category. 
It is convenient to check local regularity via the strong generation of $\thick(T_\fp A)$; see Lemma~\ref{le:reg-bcd+dg}.
The $A$-module $I(\fp)$ identifies with $T_\fp A$, and is an injective cogenerator of $\art A$, the category of artinian $A$-modules.  Moreover, an $A$-complex $X$ is artinian precisely when the $A$-module $H^*(X)$ is artinian, so $\art_A\bfD(\Mod A)$ identifies with $\bfD^{\mathrm b}(\art A)$. Thus, given Example~\ref{ex:gldim}, it remains to verify that $A$ is regular if and only if $\art A$ has finite global dimension. This follows from the result due to Auslander, Buchsbaum, and Serre that $A$ is regular if and only if $\Ext_A^n(k,k)=0$ for $n\gg 0$; see Proposition~1.3.1 and Corollary~2.2.7 in \cite{Bruns/Herzog:1998a}. 
\end{proof}

Here is another example of a locally regular category.

\begin{example} 
\label{ex:exterior} 
Let $k$ be a field and $A = \Lambda^*(V)$ the exterior algebra on a finite dimensional $k$-vector space $V$ which we view as a dg-algebra generated in degree $-1$ with trivial differential. Let $\cat T = \KInj{A}$ denote the homotopy category of injective $A$-modules. This category is compactly generated and the Hopf algebra structure  gives rise to a tensor product. The category $\cat T$ is noetherian and generated by the injective resolution of the trivial $A$-module $k$. The graded endomorphism ring identifies with a symmetric algebra, so $\End_\cat T^*(\ik) \cong S^*(V^\vee[2])$. Hence  $\cat T$ is locally regular by Theorem~\ref{th:reg-end}.
\end{example}

\subsection*{Descent}

The result below is a version of Proposition~\ref{pr:regularity} for tensor triangulated categories. In the statement, the condition that $f$ is strong symmetric monoidal means that $f(\one_{\cat T})\cong \one_{\cat U}$ and there is a natural isomorphism 
\[
f(X\otimes Y)\cong fX\otimes fY\quad\text{for $X,Y$ in $\cat T$.}
\]
The condition on an adjoint pair $(f,g)$ that both functors preserve products and coproducts arises naturally and has been studied in the context of tensor triangulated categories and Grothendieck--Neeman duality; see \cite[Theorem~1.7]{Balmer/DellAmbrogio/Sanders:2016a}.

\begin{proposition}
\label{pr:regularity-tt} 
Let $f\colon \cat T\to\cat U$ be an exact functor between rigidly compactly generated tensor triangulated categories that is strong symmetric monoidal and preserves coproducts. Suppose also that following conditions hold:
\begin{enumerate}[\quad\rm(1)] 
\item $\one_{\cat T}\in \thick(g(\one_{\cat U}))$;
\item the canonical map $\End^*_{\cat T}(\one)\to \End^*_{\cat U}(\one)$ is finite.
\end{enumerate}
If $\cat U$ is locally regular, then so is $\cat T$. 
\end{proposition}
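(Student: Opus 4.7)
The plan is to deduce the result from Proposition~\ref{pr:regularity} applied to the adjoint pair $(f,g)$, where $g$ is the right adjoint of $f$ supplied by Brown representability. Set $R=\End^*_{\cat T}(\one)$ and $S=\End^*_{\cat U}(\one)$. Both $f$ and $g$ are canonically $R$-linear because $f(\one_{\cat T})=\one_{\cat U}$, and hypothesis~(2) makes $R\to S$ finite, so by Lemma~\ref{le:change-of-rings} local regularity of $\cat U$ over $S$ is equivalent to local regularity over $R$, and we henceforth work over $R$. Since $f$ is strong symmetric monoidal it sends dualisable objects to dualisable objects, and rigidity identifies dualisable with compact, so $f$ preserves compacts; equivalently, $g$ preserves coproducts.

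The tensor structure delivers the projection formula $X\otimes g(\one_{\cat U})\iso g(fX)$: on dualisable $X$ it is a direct adjunction computation using Proposition~\ref{pr:dualisable}, and it extends to arbitrary $X\in\cat T$ by a density argument, since both sides commute with coproducts. Combined with the bound $\one_{\cat T}\in\thick^d(g(\one_{\cat U}))$ furnished by hypothesis~(1), tensoring with $X$ yields $X\in\thick^d(gfX)$ for every $X\in\cat T$, which is the bound required by Proposition~\ref{pr:regularity}. Noetherianity of $\cat T$ over $R$ falls out of the same apparatus: for compacts $C,D\in\cat T$ the adjunction gives $\Hom^*_{\cat T}(C,g(fD))\cong\Hom^*_{\cat U}(fC,fD)$, a noetherian $S$-module and therefore $R$-noetherian by~(2), and the thick-closure bound on $D$ over $g(fD)$ propagates noetherianity to $\Hom^*_{\cat T}(C,D)$.

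The principal obstacle is the remaining hypothesis of Proposition~\ref{pr:regularity}, that the pair $(f,g)$ preserves both products and coproducts. Product preservation by $g$ is automatic and coproduct preservation by both functors is already in hand; what is left is product preservation by $f$, equivalently the existence of a left adjoint to $f$. Following the sufficient condition recorded after Proposition~\ref{pr:regularity}, it suffices to produce a right adjoint $h$ to $g$ together with an equivalence $u\colon\cat U\iso\cat U$ identifying $h$ with $uf$. The existence of $h$ is immediate from Brown representability since $g$ preserves coproducts; the heart of the obstacle is to produce $u$, and this is where hypothesis~(2) should intervene via the Grothendieck--Neeman duality framework of \cite{Balmer/DellAmbrogio/Sanders:2016a}, with the invertible twist determined by the algebra $g(\one_{\cat U})$ whose compactness in $\cat T$ is controlled by the finite extension $R\to S$. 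Once product preservation is established, Proposition~\ref{pr:regularity} delivers local regularity of $\cat T$ over $R$, which by Definition~\ref{def:locally} is the desired conclusion.
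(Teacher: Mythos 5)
Your overall strategy matches the paper's: reduce to Proposition~\ref{pr:regularity} for the adjoint pair $(f,g)$, use Lemma~\ref{le:change-of-rings} to move between $R$- and $S$-linear regularity, obtain the projection formula $gfX \cong X\otimes g(\one_{\cat U})$, and tensor hypothesis~(1) against $X$ to produce the bound $X\in\thick^d(gfX)$. But there are two places where the argument is incomplete.

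First, and most substantively, you never verify that $R=\End^*_{\cat T}(\one)$ is itself a noetherian ring, and several of your steps silently assume it. Lemma~\ref{le:change-of-rings} is stated only for a finite morphism of \emph{noetherian} graded rings, so it cannot be invoked before $R$ is known to be noetherian. Likewise your claim that a noetherian $S$-module is ``therefore $R$-noetherian by~(2)'' does not follow from finiteness of $R\to S$ alone: a finitely generated $S$-module is finitely generated over $R$, but ``finitely generated'' and ``noetherian'' only coincide once $R$ is a noetherian ring. The paper closes this gap by an Eakin--Nagata argument: since $S$ is noetherian and finite over $R$, the quotient $R/\Ker(R\to S)$ is noetherian, so $S$ is a noetherian $R$-module; the adjunction isomorphism $\Hom^*_{\cat T}(\one_{\cat T},g\one_{\cat U})\cong S$ together with $\one_{\cat T}\in\thick(g\one_{\cat U})$ then forces $R=\Hom^*_{\cat T}(\one_{\cat T},\one_{\cat T})$ to be noetherian. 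This step must precede everything else.

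Second, you correctly identify that the remaining hypothesis of Proposition~\ref{pr:regularity} is preservation of products by $f$, but the route you sketch --- producing an equivalence $u$ so that $(g,uf)$ is adjoint, via a Wirthmüller-type argument with an invertible twist controlled by $g(\one_{\cat U})$ --- is left as a conjecture (``this is where hypothesis~(2) should intervene''). The paper settles this more directly: it simply invokes \cite[Theorem~1.3]{Balmer/DellAmbrogio/Sanders:2016a} to obtain both that $f$ and $g$ preserve products and coproducts and the projection formula, without passing through a Wirthmüller twist. Your speculative route may be viable but you have not shown that $\omega_f$ is invertible or that $g(\one_{\cat U})$ is compact, so as written this is a gap rather than a proof.
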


\begin{proof}
Set $R=\End^*_{\cat T}(\one)$ and $S=\End^*_{\cat U}(\one)$, and view $\cat U$ as an $R$-linear category via the natural map $R\to S$.  We claim that, under the hypothesis of the statement, when the ring $S$ is noetherian so is $R$.

Indeed, assume the ring $S$ is noetherian. Since  $S$ is finite as an $R$-module, for  $I=\Ker(R\to S)$, the ring $R/I$ is noetherian, by the theorem of Eakin and Nagata \cite[Theorem~3.7]{Matsumura:1986a}. Thus $S$ is noetherian as an $R$-module. Then, since the adjunction isomorphism $\Hom^*_{\cat T}(\one_{\cat T}, g(\one_{\cat U})) \cong \Hom^*_{\cat U}(\one_{\cat U}, \one_{\cat U})=S$ is $R$-linear, the $R$-module $\Hom^*_{\cat T}(\one_{\cat T}, g(\one_{\cat U}))$ is noetherian. Since $\one_{\cat T}$ is in the thick subcategory generated by $g(\one_{\cat U})$, the $R$-module $\Hom^*_{\cat T}(\one_{\cat T},\one_{\cat T})=R$ is noetherian, as claimed.

Suppose $\cat U$ is locally regular over $S$; in particular $S$, and hence also $R$, is noetherian. The map $R\to S$ is finite, so Lemma~\ref{le:change-of-rings} yields that $\cat U$ is also locally regular over $R$. We claim  Proposition~\ref{pr:regularity} applies, so $\cat T$ is locally regular over $R$.

Indeed, from \cite[Theorem~1.3]{Balmer/DellAmbrogio/Sanders:2016a} we get that $f$ and $g$ both preserve products and coproducts. For $X$ in $\cat T$ the projection formula from \cite[Theorem~1.3]{Balmer/DellAmbrogio/Sanders:2016a} yields  
    \[
    X\otimes gf(\one_{\cat T})\cong g(fX\otimes f\one_{\cat T}) \cong g(f(X\otimes \one_{\cat T}))\cong gfX\,.
    \]
By hypothesis there exists an integer $d$ such that $\one_{\cat T}$ is in $\thick^d(gf\one_{\cat T})$. Applying $X\otimes -$ and using the isomorphism above, we deduce that $X$ is in $\thick^d(gfX)$.
\end{proof}

The following example illustrates the result above. In particular it shows that $\cat T$ locally regular need not imply $\cat U$ locally regular.

\begin{example}
Let $\varphi\colon A\to B$ be a finite map of commutative noetherian rings such that $B$ is faithful and of finite projective dimension as an $A$-module.
Then the adjoint pair
\[
\begin{tikzcd}[column sep = large]
    \bfD(A)\arrow[yshift=.75ex]{rr}{\varphi^*=B\lotimes_A-}&&  \bfD(B)  \arrow[yshift=-.75ex]{ll}{\varphi_*=\mathrm{res}}
  \end{tikzcd}
\]
satisfies the hypothesis of Proposition~\ref{pr:regularity-tt}. Indeed, it is clear that $\varphi^*$ is strong symmetric monoidal, preserving coproducts. Since $\varphi_* B$ is a perfect complex and has full support, the theorem of Hopkins~\cite{Hopkins:1987a} and Neeman~\cite{Neeman:1992a} yields that $A$ is in $\thick(\varphi_* B)$, so condition (1) holds. Condition (2) holds because the endomorphisms of the units in $\dcat A$ and $\dcat B$ are $A$ and $B$, respectively.  Proposition~\ref{pr:regularity-tt} subsumes the well known result that when $B$ is regular so is $A$. The converse need not hold; consider the case $A$ is a field.
\end{example}

\subsection*{Dualisable torsion objects}
Consider for  $\fp$ in $\Spec R$ the category of $\fp$-local $\fp$-torsion objects and describe its dualisable objects.

\begin{proposition}
\label{pr:regular-gorenstein}
    Let $\cat T$ be a rigidly compactly generated tensor triangulated category that is locally regular. For $\fp$ in $\Spec R$ and any object $X$ in $\gam_\fp{\cat T}$ the following conditions are equivalent.
    \begin{enumerate}[\quad\rm(1)]
        \item $X$ is dualisable in $\gam_\fp\cat T$;
        \item $\Hom^*_{\cat T}(C,X)$ is artinian over $R_\fp$ for each compact object $C$ in $\cat T$;
        \item $\Hom^*_{\cat T}(C,X)$ has finite length over $R_\fp$ for each compact object $C$ in $\gam_{\fp}\cat T$;
        \item $\Hom^*_{\cat T}(X,C)$ has finite length over $R_\fp$ for each compact object $C$ in $\gam_{\fp}\cat T$;
        \item $X$ is in $\thick(\gam_\fp\comp{\cat T})$;
        \item $X\to(D\phat)^2 X$ is an isomorphism.
    \end{enumerate}
\end{proposition}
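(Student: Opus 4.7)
The plan is to establish the six conditions as equivalent by treating condition (2) as a central hub, then reducing the remaining equivalences to it.

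The equivalence (2) $\Leftrightarrow$ (5) follows directly from local regularity: by Proposition~\ref{pr:strong-regular} applied to $\cat T_\fp$ as an $R_\fp$-linear category (combined with Lemma~\ref{le:reg-bcd+dg}), one has $\thick(\gam_\fp G) = \art_{R_\fp}\cat T_\fp$ for any compact generator $G$, and since $G$ generates $\comp{\cat T}$ with $\gam_\fp$ exact, this coincides with $\thick(\gam_\fp\comp{\cat T})$. The implications (5) $\Rightarrow$ (1) $\Rightarrow$ (2) then close the loop: (5) $\Rightarrow$ (1) is Corollary~\ref{co:plocal-cat}, since each $\gam_\fp C$ is dualisable in $\gam_\fp\cat T$ and dualisability passes to thick subcategories, while (1) $\Rightarrow$ (2) is Proposition~\ref{pr:dualising-hom}(1).

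For (2) $\Leftrightarrow$ (3) I would combine Proposition~\ref{pr:artinian} (identifying artinian and finite length over the local ring $R_\fp$), the description of $\comp{(\gam_\fp\cat T)}$ as generated up to direct summands by Koszul objects $\kos{D_\fp}{\fp}$ for $D \in \comp{\cat T}$ (Lemma~\ref{le:p-torsion}), and the length-exchange of Lemma~\ref{le:koszul-tt} moving finite length between $\Hom^*_{\cat T}(\kos{D_\fp}{\fp}, X)$ and $\Hom^*_{\cat T}(D, \kos X \fp)$. The forward implication is clear from the defining triangles of the Koszul construction; the reverse proceeds inductively, using Lemma~\ref{le:coNak} to promote artinianness of $\Ker(r)$ on the $\fp$-torsion module $\Hom^*_{\cat T}(D, X)$ to artinianness of the module itself. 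The equivalence (3) $\Leftrightarrow$ (4) is then Lemma~\ref{le:hom-symmetry}(2), whose hypothesis $\thick\{\gam_\fp C\} = \thick\{T_\fp C\}$ is supplied by Proposition~\ref{pr:strong-regular} under local regularity.

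Finally, the equivalence (2) $\Leftrightarrow$ (6) is derived from Theorem~\ref{th:bc-duality}. The forward direction is immediate: (2) places $X$ in $\cat A_\fp$, on which the biduality $X \to (D\phat)^2 X$ is an isomorphism by the theorem. The main obstacle is the converse (6) $\Rightarrow$ (2): applying $\Hom^*_{\cat T}(C, -)$ to the biduality and iterating the $D\phat$-analog of Lemma~\ref{le:BC} (which follows at once from the defining isomorphism of $D\phat$) shows that the $R\phat$-module $M = \Hom^*_{\cat T}(C, X)$ is Matlis reflexive, $M \iso (D\phat)^2 M$ naturally, and it is also $\fp$-torsion since $X \in \gam_\fp\cat T$. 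The task thus reduces to proving that every $\fp$-torsion Matlis reflexive module over $R\phat$ is artinian. This uses the structure theorem characterising Matlis reflexive modules over a complete local ring as the minimax modules, i.e.\ those admitting a finitely generated submodule with artinian quotient: the $\fp$-torsion hypothesis forces the finitely generated submodule to be annihilated by a power of $\fp$ and hence of finite length, so $M$ is an extension of artinian modules, whence artinian.
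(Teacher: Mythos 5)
Your proof is correct and follows essentially the same route as the paper's: the cycle $(1)\Rightarrow(2)\Rightarrow(5)\Rightarrow(1)$ via Proposition~\ref{pr:dualising-hom}, Proposition~\ref{pr:strong-regular}, and Corollary~\ref{co:plocal-cat}, with the side-equivalences $(2)\Leftrightarrow(3)$ from Proposition~\ref{pr:artinian}, $(3)\Leftrightarrow(4)$ from Lemma~\ref{le:hom-symmetry}, and $(2)\Leftrightarrow(6)$ via the $D\phat$-analogue of Lemma~\ref{le:BC}. The one deviation worth noting is in $(6)\Rightarrow(2)$: you invoke the structure theorem for Matlis reflexive modules as minimax modules, whereas the paper's Lemma~\ref{le:reflexive-art} argues more directly that reflexive modules are closed under submodules and quotients (so the socle is finitely generated) and that a $\fp$-torsion module with finite-length essential socle is artinian; both are valid, and the paper's version is a bit more self-contained.
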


\begin{proof} The proof combines various results from previous sections; only the implication (2)$\Rightarrow$(5) uses the local regularity condition.

    (1)$\Rightarrow$(2): This follows from Proposition~\ref{pr:dualising-hom}.
    
    (2)$\Leftrightarrow$(3): This follows from Proposition~\ref{pr:artinian}. 

(3)$\Leftrightarrow$(4): This follows from Lemma~\ref{le:hom-symmetry}. 

(2)$\Rightarrow$(5): This follows from Proposition~\ref{pr:strong-regular}.

(5)$\Rightarrow$(1): This follows from Corollary~\ref{co:plocal-cat}.

(2)$\Leftrightarrow$(6): For any $C\in\comp{\cat T}$ the $R\phat$-module $\Hom^*_{\cat T}(C,X)$ is $\fp$-torsion, so the  assertion follows from the analogue of Lemma~\ref{le:BC} for $D\phat$ and Lemma~\ref{le:reflexive-art} below.
\end{proof}

\begin{lemma}
\label{le:reflexive-art}
    Let $M$ be an $R\phat$-module that is $\fp$-torsion. Then the canonical map $\delta_M\colon M\to(D\phat)^2 M$ is an isomorphism if and only if $M$ is artinian.
\end{lemma}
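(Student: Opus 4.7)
The forward direction is immediate: if $M$ is artinian, then $M\in\art R\phat$ and the map $\delta_M$ is an isomorphism by Matlis duality (Proposition~\ref{pr:matlis}). The content lies in the converse.

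Assume $M$ is $\fp$-torsion and $\delta_M$ is invertible. Set $k := R\phat/\fp R\phat$ and $V := \Soc M = M[\fp]$, viewed as a $k$-vector space. My plan is to reduce artinianness of $M$ to finite-dimensionality of $V$ over $k$, transport the hypothesis on $\delta_M$ down to $V$, and conclude from an elementary fact about biduality of vector spaces. For the reduction, every finitely generated submodule of $M$ has finite length (being a module over some Artinian quotient $R\phat/\fp^n$), so $V$ is essential in $M$ and thus $M\hookrightarrow E(V)=I(\fp)^{(B)}$ for any $k$-basis $B$ of $V$ — here using that $R\phat$ is noetherian so that coproducts of injectives remain injective. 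Since $I(\fp)$ is itself artinian (Matlis dual of the noetherian module $R\phat$), it suffices to establish $\dim_k V<\infty$.

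For the transport, observe that for any $k$-vector space $W$ one has canonical identifications $D\phat W = \Hom_k(W,k) = W^*$ and $(D\phat)^2 W = W^{**}$. Applying $(D\phat)^2$, which is exact, to the inclusion $\iota\colon V\hookrightarrow M$ and combining with the naturality of $\delta$ yields a commutative square
\[
\begin{tikzcd}
V \arrow[hook,"\iota"]{r}\arrow["\delta_V"']{d} & M \arrow["\delta_M"]{d}\\
V^{**}\arrow[hook,"(D\phat)^2\iota"']{r} & (D\phat)^2 M
\end{tikzcd}
\]
with injective bottom row and invertible right column. Since $V^{**}$ is annihilated by $\fp$, for any $x\in V^{**}$ the element $m:=\delta_M^{-1}(x)\in M$ satisfies $\delta_M(\fp m)=\fp x=0$, forcing $\fp m=0$ and hence $m\in V$. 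Thus $\delta_M^{-1}$ carries $V^{**}$ into $V$, and commutativity of the square then forces $\delta_V\colon V\to V^{**}$ to be surjective — equivalently, an isomorphism.

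The canonical biduality map of a $k$-vector space is surjective precisely when the space is finite-dimensional, giving $\dim_k V<\infty$ and hence $M$ artinian. The main obstacle is the first step — carrying out the socle reduction carefully for non-finitely generated modules, using essentialness of the socle and injectivity of direct sums of injectives over the noetherian ring $R\phat$; once the hypothesis has been restricted to $V$, the rest is a brief diagram chase plus elementary linear algebra.
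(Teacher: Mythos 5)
Your proof is correct, and the overall architecture matches the paper's: reduce to the socle $V=\Soc M$, show $V$ is Matlis reflexive and hence a finite-dimensional $k$-vector space, then conclude $M$ is artinian since $V$ is essential (because $M$ is $\fp$-torsion) and so $M$ embeds in a finite direct sum of copies of $I(\fp)$.

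Where you diverge is in the key middle step. The paper invokes the general principle that Matlis reflexive modules over $R\phat$ are closed under submodules and quotients, proven by applying the Snake Lemma to the morphism of short exact sequences $\delta_\bullet\colon (0\to V\to M\to M/V\to 0)\to (D\phat)^2(0\to V\to M\to M/V\to 0)$ and using that $\delta$ is always injective. You instead run a direct element chase specific to the socle: since $V^{**}$ is annihilated by $\fp$ and $\delta_M$ is an injective $R\phat$-map, the preimage $\delta_M^{-1}(V^{**})$ lands in $M[\fp]=V$, and commutativity of the biduality square then forces $\delta_V$ to be surjective. Your route avoids proving the general submodule-closure statement — replacing a homological closure property with an explicit computation exploiting the fact that the socle is the full $\fp$-annihilator — at the cost of being slightly longer and less reusable. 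Both proofs also rely implicitly on $\delta$ being injective (you need $\delta_M$ injective to force $\fp m=0$); this is given to you here since $\delta_M$ is assumed an isomorphism, whereas the paper uses injectivity of $\delta$ in general to feed the Snake Lemma. Either version is valid.
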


\begin{proof}
    From Matlis duality it follows that the map $\delta_M$ is an isomorphism if $M$ is artinian. Suppose $\delta_M$ is an isomorphism. An application of the Snake Lemma and the fact that $\delta_M$ is always a monomorphism shows that Matlis reflexive modules are closed under submodules and quotients. 
    In particular, the socle of $M$ has finite length. This is an essential submodule since $M$ is $\fp$-torsion, and therefore $M$ is artinian.
\end{proof}

\subsection*{Compactness}
Given $\fp\in\Spec R$, we consider in $\gam_\fp\cat T$ the difference between compact and dualisable objects; in cohomological terms it is the difference between having finite length  and being artinian.

\begin{corollary}\label{co:compact-dualisable}
Let $\cat T$ be as in Proposition~\ref{pr:regular-gorenstein}. For $\fp$ in $\Spec R$ and any object $X$ in $\gam_\fp{\cat T}$ the following conditions are equivalent.
\begin{enumerate}[\quad\rm(1)]
    \item $X$ is compact in $\gam_{\fp}{\cat T}$;
    \item $\Hom^*_{\cat T}(C,X)$ has finite length over $R_\fp$ for each compact object $C$ in ${\cat T}_\fp$;
    \item $\Hom^*_{\cat T}(X,C)$ has finite length over $R_\fp$ for each compact object $C$ in ${\cat T}_\fp$.
\end{enumerate}
In particular, compact objects and dualisable objects in $\gam_\fp\cat T$ coincide if and only if $\fp$ is minimal in $\supp_R\cat T$
\end{corollary}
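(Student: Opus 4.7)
The plan is to establish the equivalence (1)$\Leftrightarrow$(2)$\Leftrightarrow$(3) as a sharpening of Proposition~\ref{pr:regular-gorenstein}. The key observation is that for $X\in\gam_\fp\cat T$ and compact $C$, the $\fp$-torsion $R_\fp$-module $\Hom^*_\cat T(C,X)$ has finite length if and only if it is finitely generated, so condition~(2) is a genuine strengthening of Proposition~\ref{pr:regular-gorenstein}(3), which only requires artinianness on the narrower class $\comp{(\gam_\fp\cat T)}$. The implication (1)$\Rightarrow$(2) is immediate from Theorem~\ref{th:bc-duality}: compact objects of $\gam_\fp\cat T$ lie in $\cat A_\fp\cap\cat N_\fp$, so $\Hom^*_\cat T(C,X)$ is both artinian and noetherian over $R\phat$ (hence of finite length over $R_\fp$) for every $C\in\comp{\cat T}$, and passing to $C\in\comp{\cat T_\fp}$ is routine since any such $C$ is a summand of $C'_\fp$ with $C'\in\comp{\cat T}$ and $\Hom^*_\cat T(C'_\fp,X)=\Hom^*_\cat T(C',X)$ by $\fp$-locality of $X$. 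The equivalence (2)$\Leftrightarrow$(3) follows from Lemma~\ref{le:hom-symmetry}(1); its Gorenstein hypothesis is supplied by local regularity via Lemma~\ref{le:reg-bcd+dg} and Proposition~\ref{pr:strong-regular}, and the passage between $\comp{\cat T}$ and $\comp{\cat T_\fp}$ uses $\fp$-locality as above.

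The hard direction is (2)$\Rightarrow$(1). Since $\comp{(\gam_\fp\cat T)}\subseteq\comp{\cat T_\fp}$, condition~(2) implies Proposition~\ref{pr:regular-gorenstein}(3), so $X$ is dualisable and lies in $\cat A_\fp=\thick(\gam_\fp G)$ under local regularity. To upgrade dualisability to compactness I will pass to the Brown--Comenetz dual $Y\coloneqq D\phat X\in\lam^\fp\cat T$: by the analogue of Lemma~\ref{le:BC}, $\Hom^*_\cat T(C,Y)\cong D\phat\Hom^*_\cat T(\swd C,X)$ is of finite length for every $C\in\comp{\cat T}$, so $Y\in\cat N_\fp$ by Proposition~\ref{pr:noetherian}. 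Applying~(2) with $C=G$ a compact generator yields uniform $\fp^n$-annihilation of $\Hom^*_\cat T(G,Y)$, which propagates through triangles in $\thick(G)=\comp{\cat T}$ to give $\fp^n\Hom^*_\cat T(C,Y)=0$ for all compact $C$; since compacts detect morphisms, $\fp^n\cdot\id_Y=0$. Bounded $\fp^n$-torsion of $Y\in\lam^\fp\cat T$ forces $Y\in\gam_\fp\cat T$ (the Hom-modules localise to zero at any $\fq\not\supseteq\fp$), so $Y\in\cat A_\fp\cap\cat N_\fp$. Combining local regularity with the description of $\comp{(\lam^\fp\cat T)}$ as the thick subcategory generated by Koszul objects $\kos{C_\fp}{\fp^k}$ (Lemma~\ref{le:p-torsion}), one can then replace $\gam_\fp G$ by the compact Koszul truncation $\kos{G_\fp}{\fp^n}$ in the thick construction of $Y$, placing $Y\in\comp{(\lam^\fp\cat T)}$. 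The Dwyer--Greenlees equivalence~\eqref{eq:DG} together with $X\cong(D\phat)^2 X$ from Proposition~\ref{pr:regular-gorenstein}(6) finally gives $X\in\comp{(\gam_\fp\cat T)}$.

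For the ``in particular'' assertion, if $\fp$ is minimal in $\supp_R\cat T$ then $\gam_\fp\cat T=\cat T_\fp$ (from the proof of Lemma~\ref{le:minimal}), and in $\cat T_\fp$ compact and dualisable coincide by Proposition~\ref{pr:loc-V}. Conversely, if $\fp$ is not minimal then there exists $\fq\subsetneq\fp$ in $\supp_R\cat T$, whence $\dim R_\fp\ge 1$, and the local cohomology $H^*_{\fp}(R_\fp)\cong\Hom^*_\cat T(\one,\gam_\fp\one)$ is artinian but not of finite length; the tensor unit $\gam_\fp\one$ of $\gam_\fp\cat T$ is therefore dualisable by Corollary~\ref{co:plocal-cat} but fails condition~(2), and by the established equivalence (1)$\Leftrightarrow$(2) it is not compact. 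The principal obstacle throughout is the step (2)$\Rightarrow$(1): the delicate point is translating the cohomological finite-length condition into an object-level bounded-torsion statement on $Y$, and then using local regularity to shrink the thick closure of the non-compact $\gam_\fp G$ to that of its compact Koszul truncation.
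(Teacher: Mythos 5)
Your treatment of (1)$\Rightarrow$(2) and of (2)$\Leftrightarrow$(3) is essentially sound, though for (1)$\Rightarrow$(2) the detour through Theorem~\ref{th:bc-duality} is unnecessary: a compact object of $\gam_\fp\cat T$ is already compact in $\cat T_\fp$, so $\Hom^*_{\cat T}(C,X)$ is noetherian over $R_\fp$; being also $\fp$-torsion it has finite length. The ``in particular'' portion has a questionable ingredient, since the identification $H^*_\fp(R_\fp)\cong\Hom^*_{\cat T}(\one,\gam_\fp\one)$ is not generally valid (there is only a spectral sequence relating the two); the cleaner route, which the paper takes, is to feed the cohomological characterisations into Lemma~\ref{le:minimal}.

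The real gap is in (2)$\Rightarrow$(1), at the step where you pass from $\fp^n\,\Hom^*_{\cat T}(G,Y)=0$ to $\fp^n\cdot\id_Y=0$. This fails for two independent reasons. First, the ``propagation through triangles'' does not preserve the exponent: if a compact $C$ is built from $G$ in $k$ steps, then the annihilator of $\Hom^*_{\cat T}(C,Y)$ grows with $k$, and $k$ is unbounded over $\comp{\cat T}$ (the full compact subcategory is not strongly generated; only the artinian subcategory is). Second, even granting a uniform bound, the conclusion that $\fp^n\Hom^*_{\cat T}(C,Y)=0$ for all compact $C$ only says that $r\cdot\id_Y$ is a \emph{phantom} morphism for $r\in\fp^n$; compacts detect isomorphisms and zero objects, not vanishing of morphisms, so this does not give $r\cdot\id_Y=0$ without a pure-injectivity argument that you have not supplied for $Y\in\cat N_\fp$. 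The subsequent steps (``$Y\in\gam_\fp\cat T$'', ``$Y$ compact'') inherit this gap, and the final appeal to the Dwyer--Greenlees equivalence together with biduality does not by itself show that $D\phat$ takes compacts to compacts.

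The paper's argument for (3)$\Rightarrow$(1) (equivalently, via the established (2)$\Leftrightarrow$(3), for (2)$\Rightarrow$(1)) sidesteps both problems by working with $\End^*_{\cat T}(X)$ rather than with $\Hom^*(G,Y)$. Since $X$ is dualisable, Proposition~\ref{pr:regular-gorenstein} gives $X\in\thick(\gam_\fp(\comp{\cat T}_\fp))$, so $X$ is built from objects $\gam_\fp C$ in \emph{finitely many} steps; combined with the hypothesis and the identification $\Hom^*_{\cat T}(X,\gam_\fp C)\cong\Hom^*_{\cat T}(X,C)$, this finite thick construction forces $\End^*_{\cat T}(X)$ to have finite length, hence to be $\fp$-torsion. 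A result of Benson--Iyengar--Krause then yields $\thick(X)=\thick(\kos X{\fp})$, and $\kos X{\fp}\cong X\otimes\kos\one\fp$ is compact because $X$ is dualisable; so $X$ is compact. Your instinct to truncate via a Koszul object is exactly right, but the ``bounded torsion'' input has to come from the finite thick construction of $X$, not from $\Hom^*(G,Y)$ alone.
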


\begin{proof}
(1)$\Rightarrow$(2): The $R_\fp$-module $\Hom^*_{\cat T}(C,X)$ is noetherian, because $X$ is also compact in $\cat T_\fp$, and $\fp$-torsion, so it has finite length.

(2)$\Leftrightarrow$(3): This follows from Lemma~\ref{le:hom-symmetry}. 

(3)$\Rightarrow$(1): Passing to $\cat T_\fp$ and $R_\fp$ we can assume $R$ is local. The hypothesis implies $X$ is dualisable in $\gam_\fp \cat T$, by 
Proposition~\ref{pr:regular-gorenstein}. Thus $\kos{X}{\fp}\cong X\otimes\kos{\one}{\fp}$ is compact in $\gam_\fp\cat T$, by Proposition~\ref{pr:dualisable}, and hence also in $\cat T$. We claim $\thick(X)=\thick(\kos X{\fp})$, which implies $X$ is compact as desired.

For the claim it suffices to show that $\End^*_{\cat T}(X)$ is $\fp$-torsion, by \cite[Lemma~3.9]{Benson/Iyengar/Krause:2015a}.
First observe that for any object $C$ there is an isomorphism of $R$-modules
\[
\Hom^*_{\cat T}(X,\gam_\fp C) \cong \Hom^*_{\cat T}(X,C)
\]
since $X$ is $\fp$-torsion. Thus the hypothesis implies that the $R$-module $\Hom^*_{\cat T}(X,\gam_\fp C)$ has finite length for each compact object $C$ in ${\cat T}_\fp$. Since $X$ is dualisable in $\gam_\fp\cat T$, it is finitely built from objects of the form $\gam_\fp C$; this is again from Proposition~\ref{pr:regular-gorenstein}. Thus $\Hom^*_{\cat T}(X,X)$ has finite length, and in particular it is $\fp$-torsion.

The last part of the statement follows from Lemma~\ref{le:minimal}, using the cohomological descriptions of compact and dualisable objects given above.
\end{proof}

\subsection*{Dualisable complete objects}
 There is an equivalence $\gam_\fp\cat T\iso\lam^\fp\cat T$ of compactly generated tensor triangulated categories, for $\fp$ in $\Spec R$. Clearly, this equivalence preserves and reflects dualisability, and restricts to a triangle equivalence 
\[
\thick(\gam_\fp\comp{\cat T})\iso\thick(\lam^\fp\comp{\cat T}_\fp)\,.
\] 
This leads to an analogue of Proposition~\ref{pr:regular-gorenstein} for $\lam^\fp\cat T$.

\begin{proposition}
\label{pr:regular-gorenstein-complete}
    Let $\cat T$ be a rigidly compactly generated tensor triangulated category that is locally regular. For $\fp$ in $\Spec R$ and any object $X$ in $\lam^\fp{\cat T}$ the following conditions are equivalent.
    \begin{enumerate}[\quad\rm(1)]
        \item $X$ is dualisable;
        \item $\Hom^*_{\cat T}(C,X)$ is noetherian over $R\phat$ for each compact object $C$ in $\cat T$;
        \item $\Hom^*_{\cat T}(C,X)$ has finite length over $R\phat$ for each compact object $C$ in $\lam^{\fp}\cat T$;
        \item $\Hom^*_{\cat T}(X,C)$ has finite length over $R\phat$ for each compact object $C$ in $\lam^{\fp}\cat T$;
        \item $X$ is in $\thick(\lam^\fp\comp{\cat T}_\fp)$;
        \item $X\to(D\phat)^2 X$ is an isomorphism.
    \end{enumerate}
\end{proposition}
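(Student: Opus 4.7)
The strategy is to transport Proposition~\ref{pr:regular-gorenstein} along the Dwyer--Greenlees equivalence \eqref{eq:DG}. Given $X\in\lam^\fp\cat T$, set $Y\coloneqq\gam_\fp X$, so that $X\cong\lam^\fp Y$. This symmetric monoidal equivalence $\gam_\fp\colon\lam^\fp\cat T\iso\gam_\fp\cat T$ restricts to a triangle equivalence $\thick(\lam^\fp\comp{\cat T}_\fp)\iso\thick(\gam_\fp\comp{\cat T})$, as noted before the statement. Hence dualisability of $X$ in $\lam^\fp\cat T$ corresponds to dualisability of $Y$ in $\gam_\fp\cat T$, and similarly for the thick-subcategory conditions. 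Combined with the equivalence (1)$\Leftrightarrow$(5) of Proposition~\ref{pr:regular-gorenstein} applied to $Y$, this already yields (1)$\Leftrightarrow$(5) here.

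To handle the cohomological conditions I will use the adjunction $\gam_\fp\dashv\lam^\fp$ of Proposition~\ref{pr:tor-complete}, which provides a natural isomorphism
\[
\Hom^*_{\cat T}(C,X)=\Hom^*_{\cat T}(C,\lam^\fp Y)\cong\Hom^*_{\cat T}(\gam_\fp C,Y)
\]
for every $C\in\comp{\cat T}$. By Lemma~\ref{le:p-torsion} the object $\gam_\fp C$ is compact in $\gam_\fp\cat T$ and the module on the right is $\fp$-torsion over $R\phat$; consequently noetherian, artinian and finite length all coincide for it, and finite length over $R\phat$ agrees with finite length over $R_\fp$. When $C$ is in addition compact in $\lam^\fp\cat T$---equivalently in $\gam_\fp\cat T$, by Lemma~\ref{le:p-torsion}---one has $\gam_\fp C=C$, and the Dwyer--Greenlees equivalence also supplies $\Hom^*_{\cat T}(X,C)\cong\Hom^*_{\cat T}(Y,C)$. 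Since the objects $\gam_\fp C$ for $C\in\comp{\cat T}$ generate $\comp{(\gam_\fp\cat T)}$ as a thick subcategory, each of (2), (3) and (4) translates into the corresponding condition of Proposition~\ref{pr:regular-gorenstein} applied to $Y$.

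For (6) I will mimic the proof of (2)$\Leftrightarrow$(6) in Proposition~\ref{pr:regular-gorenstein}. Iterating the analogue of Lemma~\ref{le:BC} for $D\phat$ and using $\swd\swd C\cong C$ for compacts gives a natural isomorphism
\[
\Hom^*_{\cat T}(C,(D\phat)^2 X)\cong(D\phat)^2\Hom^*_{\cat T}(C,X)
\]
compatible with the biduality maps, so $X\to(D\phat)^2 X$ is an isomorphism precisely when each cohomological biduality map is. By the $\fp$-torsion observation above together with Lemma~\ref{le:reflexive-art}, this is equivalent to $\Hom^*_{\cat T}(C,X)$ being artinian, equivalently noetherian, over $R\phat$ for each compact $C$, which is condition (2). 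The main obstacle is the bookkeeping in condition (2): it is stated for compacts of the ambient category $\cat T$ rather than of $\lam^\fp\cat T$, but the adjunction pushes the computation into $\gam_\fp\cat T$ where $\fp$-torsion makes \emph{noetherian over $R\phat$} the correct counterpart of \emph{artinian over $R_\fp$} in Proposition~\ref{pr:regular-gorenstein}, and this is precisely what allows (2) to feed back into the biduality argument for (6).
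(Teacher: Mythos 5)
Your overall strategy -- transporting Proposition~\ref{pr:regular-gorenstein} along the Dwyer--Greenlees equivalence \eqref{eq:DG} -- is exactly the paper's, and it handles (1), (3), (4), (5) correctly. But your treatment of (2), and through it (6), has a genuine gap. You write that for $C\in\comp{\cat T}$ the adjunction gives $\Hom^*_{\cat T}(C,X)\cong\Hom^*_{\cat T}(\gam_\fp C,Y)$ and then assert ``By Lemma~\ref{le:p-torsion} the object $\gam_\fp C$ is compact in $\gam_\fp\cat T$ and the module on the right is $\fp$-torsion over $R\phat$; consequently noetherian, artinian and finite length all coincide for it.'' All three claims fail. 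The object $\gam_\fp C$ is only \emph{dualisable} in $\gam_\fp\cat T$ (Corollary~\ref{co:plocal-cat}); it is compact there only when $\fp$ is minimal, since compacts of $\gam_\fp\cat T$ must have $\fp$-power torsion endomorphisms by Lemma~\ref{le:p-torsion}, while $\End^*_{\cat T}(\gam_\fp\one)\cong R\phat$. For the same reason $\Hom^*_{\cat T}(C,X)$ need not be $\fp$-torsion for $C\in\comp{\cat T}$ and $X\in\lam^\fp\cat T$: taking $C=\one$ and $X=\lam^\fp\one_\fp$ gives $R\phat$, which is $\fp$-torsion only in dimension zero. And even for genuinely $\fp$-torsion modules, noetherian, artinian and finite length do \emph{not} all coincide (consider $I(\fp)$, which is artinian and $\fp$-torsion but not noetherian); they coincide only for modules annihilated by a power of $\fp$. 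So the adjunction does not convert (2) into a condition phrased in terms of compacts, and the reduction you claim does not go through.

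This is precisely the subtlety the paper flags: all conditions transport under the equivalence except (2), because it quantifies over compacts of $\cat T$ rather than of $\lam^\fp\cat T$. The paper closes the gap by invoking Proposition~\ref{pr:noetherian}, which establishes (2)$\Leftrightarrow$(3) by a Brown--Comenetz duality argument (it passes to $\gam_\fp\bcd X$ and uses that biduality for finite-length modules recovers the original). You would need to either invoke that proposition or reproduce its duality argument; the adjunction alone is not enough. Your handling of (6) inherits the same difficulty, since it appeals to the $\fp$-torsion assertion together with Lemma~\ref{le:reflexive-art}. (Also note Lemma~\ref{le:reflexive-art} gives reflexive iff artinian for torsion modules, which would produce condition (2) of Proposition~\ref{pr:regular-gorenstein}, not the noetherian condition you want -- a further sign that you are not in the torsion situation that lemma addresses.) The clean route for (6) is simply to transport (6) of Proposition~\ref{pr:regular-gorenstein} along the equivalence, observing that $\gam_\fp$ intertwines $(D\phat)^2$ on $\lam^\fp\cat T$ with $(D\phat)^2$ on $\gam_\fp\cat T$, rather than going through (2).
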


\begin{proof}
Given the equivalence $\gam_\fp\cat T\iso\lam^\fp\cat T$, the result follows from Proposition~\ref{pr:regular-gorenstein}. The only condition that does not translate directly is (2), but this is equivalent to (3) by Proposition~\ref{pr:noetherian}.     
   \end{proof}

\begin{corollary}
  Let $\cat T$ be a rigidly compactly generated tensor triangulated category that is locally regular. For $\fp$ in $\Spec R$  the functor $D\phat$ restricts to a pair of mutually quasi-inverse contravariant equivalences $\cat A_\fp\rightleftarrows\cat N_\fp$, identifying the dualisable objects in $\gam_{\fp}\cat T$ with the dualisable objects in $\lam^{\fp}\cat T$. Hence the subcategories of compact objects in $\gam_{\fp}\cat T$ and $\lam^{\fp}\cat T$  agree and equal  $\cat A_\fp\cap\cat N_\fp$.
\end{corollary}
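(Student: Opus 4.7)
The plan is to observe that most of this corollary follows by combining results established earlier in Section~\ref{se:regularity} with the Brown--Comenetz duality of Theorem~\ref{th:bc-duality}. The local regularity hypothesis is what upgrades Theorem~\ref{th:bc-duality} from an equivalence between $\cat A_\fp$ and $\cat N_\fp$ to an equivalence between the dualisable objects on the two sides.

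First, Theorem~\ref{th:bc-duality} already provides the contravariant equivalence $D\phat\colon \cat A_\fp \rightleftarrows \cat N_\fp$, without any regularity assumption. To identify these with the dualisable objects, I would invoke the equivalence of conditions (1) and (2) in Proposition~\ref{pr:regular-gorenstein}: an object $X$ in $\gam_\fp\cat T$ is dualisable if and only if $\Hom^*_{\cat T}(C,X)$ is artinian over $R_\fp$ for all compact $C$ in $\cat T$, which is precisely the defining condition for membership in $\cat A_\fp$. The analogous equivalence in Proposition~\ref{pr:regular-gorenstein-complete} shows that dualisable objects in $\lam^\fp\cat T$ coincide with $\cat N_\fp$. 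Transporting these equalities along the Brown--Comenetz equivalence yields the stated identification of dualisable objects.

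For the final statement, Lemma~\ref{le:p-torsion} already gives $\comp{(\gam_\fp\cat T)} = \comp{(\lam^\fp\cat T)}$. One inclusion is immediate: a compact object $X$ in $\gam_\fp\cat T$ is dualisable on both sides and therefore lies in $\cat A_\fp \cap \cat N_\fp$ by the previous paragraph. For the converse, given $X \in \cat A_\fp \cap \cat N_\fp$, the $R\phat$-module $\Hom^*_{\cat T}(C,X)$ is simultaneously artinian and noetherian for every compact $C$ in $\cat T$, hence of finite length. Since each compact object in $\cat T_\fp$ is a direct summand of $C_\fp$ for some compact $C$ in $\cat T$ (Lemma~\ref{le:gam-loc-V}), the finite-length property extends to all compact objects of $\cat T_\fp$. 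Corollary~\ref{co:compact-dualisable} applied to $X \in \gam_\fp\cat T$ then forces $X$ to be compact.

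No step should present a serious obstacle; the substantive work has already been done in Theorem~\ref{th:bc-duality}, Corollary~\ref{co:compact-dualisable}, and Propositions~\ref{pr:regular-gorenstein} and \ref{pr:regular-gorenstein-complete}. The only minor point that requires attention is the transition from the finite-length condition for compact $C$ in $\cat T$ to the same condition for compact $C$ in $\cat T_\fp$ needed to apply Corollary~\ref{co:compact-dualisable}, which is handled by the description of compact generators of $\cat T_\fp$ via Lemma~\ref{le:gam-loc-V}.
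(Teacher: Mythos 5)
Your proposal is correct and follows essentially the same route as the paper's proof: cite Theorem~\ref{th:bc-duality} for the equivalence $\cat A_\fp\rightleftarrows\cat N_\fp$, Propositions~\ref{pr:regular-gorenstein} and \ref{pr:regular-gorenstein-complete} to identify $\cat A_\fp$ and $\cat N_\fp$ with the dualisable objects, and Corollary~\ref{co:compact-dualisable} for the compactness claim. You fill in a bit more detail than the paper does for the inclusion $\cat A_\fp\cap\cat N_\fp\subseteq\comp{(\gam_\fp\cat T)}$ (passing from finite length for compacts of $\cat T$ to compacts of $\cat T_\fp$ via Lemma~\ref{le:gam-loc-V} and Lemma~\ref{le:local-noetherian}), and for the reverse inclusion you could alternatively cite the containment $\comp{(\gam_\fp\cat T)}\subseteq\cat A_\fp\cap\cat N_\fp$ already recorded in the statement of Theorem~\ref{th:bc-duality} rather than passing through dualisability.
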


\begin{proof}
    The pair of equivalences  $\cat A_\fp\rightleftarrows\cat N_\fp$ has been established in Theorem~\ref{th:bc-duality}. The statement about dualisable objects then follows from Propositions~\ref{pr:regular-gorenstein} and \ref{pr:regular-gorenstein-complete}. The statement about compact objects follows from Corollary~\ref{co:compact-dualisable}.
\end{proof}

\subsection*{Strong generation}
Here is a consequence of Proposition~\ref{pr:regular-gorenstein} and Lemma~\ref{le:reg-bcd+dg}.

\begin{corollary}
\label{co:kinj-main}
For each $\fp$ in $\Spec R$, the thick subcategory of dualisable objects in $\gam_\fp \cat T$ and in $\lam^\fp\cat T$ have strong generators. \qed
\end{corollary}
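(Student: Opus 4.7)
The plan is to identify each of the two subcategories of dualisable objects with a thick subcategory that local regularity already asserts to be strongly generated; almost all the work has been done in the preceding propositions and lemmas.

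Fix a compact generator $G$ of $\cat T$, which exists by rigid compact generation. By Definition~\ref{def:locally}, the thick subcategory $\thick(\gam_\fp G)$ of $\gam_\fp\cat T$ is strongly generated; by Lemma~\ref{le:reg-bcd+dg}, so is $\thick(\lam^\fp G_\fp)$ in $\lam^\fp\cat T$.

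For $\gam_\fp\cat T$, I would apply the equivalence (1)$\Leftrightarrow$(5) of Proposition~\ref{pr:regular-gorenstein}: the full subcategory of dualisable objects coincides with $\thick(\gam_\fp\comp{\cat T})$. Since $G$ generates $\comp{\cat T}$ as a thick subcategory and $\gam_\fp$ is exact, this equals $\thick(\gam_\fp G)$, which is strongly generated by hypothesis. For $\lam^\fp\cat T$ I would argue in parallel, using Proposition~\ref{pr:regular-gorenstein-complete}(1)$\Leftrightarrow$(5) to identify the dualisable objects with $\thick(\lam^\fp\comp{(\cat T_\fp)})$; since $G_\fp$ generates $\comp{(\cat T_\fp)}$ up to direct summands (Lemma~\ref{le:gam-loc-V}) and $\lam^\fp$ is exact, this coincides with $\thick(\lam^\fp G_\fp)$. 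A quicker alternative for the complete case is to invoke the tensor equivalence $\gam_\fp\cat T\iso\lam^\fp\cat T$ from~\eqref{eq:DG}, which preserves dualisability and transports the strong generator from one side to the other.

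There is no genuine obstacle. The only step worth flagging is the interchange of thick closure with the exact functors $\gam_\fp$ and $\lam^\fp$; this is immediate because these functors commute with finite (co)products and mapping cones, so that the thick closure of the image of a generator agrees with the image of the thick closure.
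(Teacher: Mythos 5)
Your proposal is correct and follows essentially the same route as the paper, which simply cites Proposition~\ref{pr:regular-gorenstein} (items (1)$\Leftrightarrow$(5)) together with Lemma~\ref{le:reg-bcd+dg}; you have filled in the small bookkeeping step of passing from $\thick(\gam_\fp\comp{\cat T})$ to $\thick(\gam_\fp G)$ (and likewise for $\lam^\fp$), which is indeed immediate because $\gam_\fp$ and $\lam^\fp$ are exact, so the preimage of a thick subcategory is thick. One small attribution error: the existence of a single compact generator $G$ is not a consequence of rigid compact generation alone --- it is part of the standing local regularity hypothesis (Definition~\ref{def:locally}), which is in force here as the corollary is stated under the assumptions of Propositions~\ref{pr:regular-gorenstein} and~\ref{pr:regular-gorenstein-complete}.
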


\section{Finite groups}
\label{se:finite-groups}
The main result in this section is that for any finite group $G$, the tensor triangulated category $\KInj{kG}$ is locally regular. Applications include the various characterisations of local dualisable objects stated in the introduction. With an eye towards future applications, we establish an intermediate step in the more general setting of finite group schemes. We begin with recalling some fundamental results about cohomology and representations of finite group schemes.

\subsection*{Finite dimensional Hopf algebras}
Let $k$ be a field of positive characteristic and $A$ a finite dimension co-commutative Hopf algebra over $k$.
Equivalently, $A \cong kG$ is a group algebra of some finite group scheme $G$ over $k$, that is to say, the $k$-linear dual to the coordinate algebra $k[G]$ of $G$.  The homotopy category $\KInj A$ of injective $A$-modules is a compactly generated triangulated category, and the quotient functor $q\colon \KInj A\to \dcat{\Mod A}$ induces an equivalence
\begin{equation}
\label{eq:kinj-compact}
 \comp{\KInj A} \longiso \dbcat A\,;   
\end{equation}
its quasi-inverse sends an $A$-complex $M$ to its injective resolution, $\bfi M$; see \cite{Benson/Iyengar/Krause/Pevtsova:2018a}.

Endowed with the product given by $X\otimes_k Y$ with diagonal $A$-action, $\KInj A$ is a tensor triangulated category, with unit  $\bfi_A k$, where $k$ is viewed as an $A$-module via the augmentation $A\to k$. Moreover $\KInj A$ is rigidly compactly generated. For each compact object $C$ in $\KInj A$, the $R=\Ext^*_A(k,k)$-module $\Ext_A^*(C,C)$ is noetherian, by \cite[Theorem 1.1]{Friedlander/Suslin:1997a}. In particular, the tensor triangulated category $\KInj A$ is noetherian. 

In the next theorem we show for any finite commutative unipotent group scheme, its homotopy category of injectives is locally regular. 

\begin{theorem}
\label{th:hopf}
Let $k$ be a field of positive characteristic $p$ and $A$ a finite dimensional co-commutative Hopf algebra over $k$. Assume that there exist integers $q_i=p^{d_i}$ for  $1\le i\le r$ such that there is an isomorphism of $k$-algebras
\[
A\cong \frac{k[z_1,\dots,z_r]}{(z_1^{q_1},\dots,z_r^{q_r})}\,.
\]
 Then the tensor triangulated category $\KInj A$ is locally regular.
\end{theorem}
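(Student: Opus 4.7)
The plan is to invoke Theorem~\ref{th:reg-end}: it suffices to produce a compact generator $G$ of $\KInj A$ whose graded endomorphism ring has finite global dimension. Noetherianity of $\KInj A$ over $R = \Ext^*_A(k,k)$ follows from the Friedlander--Suslin result recalled immediately before the theorem, so no further work is needed for that hypothesis.

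The obvious choice $G = \bfi k$ will fail: for a single factor $A_i = k[z]/(z^{p^d})$ with $p$ odd, $\End^*_{\KInj A}(\bfi k) \cong \Lambda(\eta) \otimes k[\zeta]$ with $|\eta| = 1$ and $|\zeta| = 2$, and the exterior factor forces infinite global dimension. Instead, I would take $G = \kos{\bfi k}{\eta_1,\ldots,\eta_r}$, the iterated Koszul cone along the odd-degree nilpotent generators $\eta_i \in R$, one arising from each tensor factor of $A = \bigotimes_{i=1}^r A_i$ where $A_i = k[z_i]/(z_i^{q_i})$. The Koszul triangles immediately give $\bfi k \in \thick(G)$, so $G$ is compact and generates $\KInj A$. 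The structure of $\End^*_{\KInj A}(G)$ would then be computed by iteratively applying $\Hom^*(G,-)$ to the triangles defining $G$, exploiting the graded commutativity of $R$ (guaranteed by the cocommutative Hopf structure, which forces $\eta_i^2 = 0$). The expected outcome is that $\End^*(G)$ is finite over the polynomial subring $k[\zeta_1,\ldots,\zeta_r]$ with nilpotent radical, hence noetherian of finite global dimension at most $r$. The case $p = 2$ with some $q_i = 2$ requires separate handling, since $\Ext^*_{k[z]/z^2}(k,k) = k[\eta]$ is already polynomial; for such factors no Koszul cone is needed and the corresponding $\eta_i$ can be omitted from the Koszul data.

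The hard part will be this explicit verification of finite global dimension of $\End^*(G)$. Tracking how the exterior generators $\eta_i$ from different tensor factors interact in the iterated cone, and ruling out subtle extensions that could obstruct finite global dimension, is delicate; the cocommutative Hopf structure enters crucially to ensure the nilpotent part is genuinely nilpotent. An alternative route, avoiding the explicit computation, would be to invoke Proposition~\ref{pr:regularity-tt} and descend from an ambient locally regular category---for instance one built from a graded exterior dg-algebra as in Example~\ref{ex:exterior}, via a suitable dg-resolution of $A$ turning it into an exterior-type dg-Hopf-algebra---but identifying the correct target and verifying the hypotheses of Proposition~\ref{pr:regularity-tt} is itself nontrivial.
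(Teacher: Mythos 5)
The direct route you propose---replacing $\bfi k$ by the iterated Koszul cone $G=\kos{\bfi k}{\eta_1,\dots,\eta_r}$ along the odd-degree cohomology classes and applying Theorem~\ref{th:reg-end}---has a genuine gap, and I believe the expected finite global dimension fails. Consider the basic case $A=k[z]/(z^p)$ with $p$ odd. The class $\eta\in\Ext^1_A(k,k)$ is the extension class of $0\to k\to A/(z^2)\to k\to 0$, so $\kos{\bfi k}{\eta}\cong\Sigma\,\bfi(A/(z^2))$, and hence $\End^*(G)\cong\Ext^*_A(M,M)$ for $M=A/(z^2)$. In particular $\End^0(G)=\End_A(M)\cong k[z]/(z^2)$ already carries a nontrivial nilpotent. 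A short computation with the $2$-periodic minimal resolution shows (for $p\ge 5$) that $\Ext^i_A(M,M)\cong M$ for every $i$, with $(z)$ a square-zero ideal annihilated by the positive-degree part; the ring is essentially a trivial extension, which has infinite global dimension. So Theorem~\ref{th:reg-end} is not applicable to this $G$, and the expectation that ``$\End^*(G)$ is finite over $k[\zeta_1,\dots,\zeta_r]$ with nilpotent radical, hence of finite global dimension'' is not correct: nilpotents in degree zero are precisely what obstruct finite global dimension here, not what help. The conceptual error is that coning off the cohomology classes $\eta_i$ in $\KInj A$ does not remove the defect of $A$ being non-regular; the $\eta_i$ only record that the $z_i$ are nilpotent, and killing them on cohomology leaves the nilpotence of $A$ in place. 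A side remark: the claim that the Koszul triangles ``immediately'' give $\bfi k\in\thick(G)$ needs the nilpotence of the $\eta_i$ (it is the direction $\bfi k\in\thick(\kos{\bfi k}{\eta})$ that requires an argument), and the vanishing $\eta_i^2=0$ is just graded commutativity in odd degree in odd characteristic, not a consequence of the Hopf structure per se.

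Your ``alternative route'' is the one the paper actually takes, but the correct move is to kill the elements $z_i$ at the level of the ring $A$, not at the level of cohomology. Concretely, one chooses a primitive coproduct on $A$ (permissible because $\gam_\fp(\bfi_A k)$ is independent of the coproduct), forms the Koszul complex $B$ of $A$ on $z_1,\dots,z_r$ as a dg Hopf $A$-algebra, and passes to $\cat U=\KInj B$. There the compact generator $\bfi_B k$ has endomorphism ring a genuine polynomial ring $S=k[\chi_1,\dots,\chi_r]$ in degree $2$, so Theorem~\ref{th:reg-end} applies to $\cat U$. One then descends along the adjoint pair $(B\otimes_A-,\,\res)$ using Proposition~\ref{pr:regularity} rather than Proposition~\ref{pr:regularity-tt}: the key verifications are that $\Hom_A(B,A)\cong\Sigma^{-r}B$ makes $\Sigma^{-r}(B\otimes_A-)$ a right adjoint of restriction (so both functors preserve products and coproducts), that $A\in\thick^d(B)$ for some $d$ since $A$ is artinian local, and that the structure map $S\to R$ is finite so Lemma~\ref{le:change-of-rings} upgrades the $S$-linear to the $R$-linear local regularity. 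This auxiliary category is not the one in Example~\ref{ex:exterior}, though they are related; identifying it, and using the non-monoidal descent result, are the missing ingredients in your sketch.
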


\begin{proof}
Set $\cat T=\KInj{A}$ and $R=\End^*_{\cat T}(\bfi_A k)\cong \Ext^*_A(k,k)$.  Since $A$ is local, its residue field $k$ generates $\dbcat{A}$ as a thick subcategory, so $\bfi_A k$ is a compact generator $\cat T$, by the equivalence \eqref{eq:kinj-compact}. Thus it suffices to prove that $\thick(\gam_\fp(\bfi_A k))$ has a strong generator for each $\fp$ in $\Spec R$. 

As is well-understood, the object $\gam_\fp(\bfi_A k)$ is independent of the coproduct on $A$; see \cite[Propostion~4.9]{Benson/Iyengar/Krause:2011a}. Thus the same is true of the local regularity condition. In what follows we assume that the coproduct on $A$ is the one for which each $z_i$ is primitive: $z_i\mapsto z_i\otimes 1 + 1\otimes z_i$. Let $B$ be the Koszul complex of $A$ on elements $z_1,\dots,z_r$, where $B$ is viewed as a dg (=differential graded) Hopf $A$-algebra, as in \cite[\S7]{Benson/Iyengar/Krause:2011b}; this is where the specific choice of the coproduct on $A$ is needed. Let $\cat U$ be the homotopy category of graded-injective dg $B$-modules; see \cite[\S4]{Benson/Iyengar/Krause:2011b}. Let $\bfi_B k$ be the semi-injective resolution of $k$ viewed as a dg $B$-module via the augmentation $B\to k$, and set $S=\End^*_{\cat U}(\bfi_B k)$; this is the polynomial ring on $r$ indeterminates over $k$, of degree $2$, see \cite[Lemma~7.1]{Benson/Iyengar/Krause:2011b}. The triangulated category $\cat U$ is compactly generated by $\bfi_B k$  and it is $S$-linear; see \cite[\S4 and \S6]{Benson/Iyengar/Krause:2011b} for details. The global dimension of $S$ is $r$ and so finite. Then, keeping in mind Lemma~\ref{le:reg-bcd+dg}, we apply Theorem~\ref{th:reg-end} to conclude that $\cat U$ is locally regular over $S$. 

 Consider the structure map $A\to B$; this is a map of dg Hopf algebras, where $A$ is viewed as a dg algebra with zero differential. As an $A$-module, $B$ is finite free so one gets an exact functor $f^*\coloneqq B\otimes_{A}-\colon \cat T\to \cat U$ with right adjoint $f_*$ induced by restriction along $A\to B$. In particular $f_*(\bfi_B k)$ is an injective resolution of $k$ as an $A$-module, and hence a compact generator of $\cat T$. Note that $R=\End^*_{\cat T}(f_*(\bfi_B k))$ and let $\varphi\colon S\to R$ be the natural map induced by $f_*$, and view $\cat T$ as an $S$-linear category. Since $A\to B$ is a map of Hopf algebras, the functors $f^*$ and $f_*$ are $S$-linear, and satisfy the hypotheses of  Proposition~\ref{pr:regularity}.

Indeed, one has an isomorphism of dg $B$-modules $\Hom_A(B,A)\cong \Sigma^{-r}B$, see \cite[Proposition~1.6.10]{Bruns/Herzog:1998a}. Thus $\Sigma^{-r}f^*$ is right adjoint to $f_*$,  and both $f^*$ and $f_*$ preserve products and coproducts.  Since $A$ is a commutative artinian local ring and $B$ is the Koszul complex on a finite set of elements in $A$, one has $A\in \thick^d(B)$ for some integer $d$; see \cite[Lemma~6.0.9]{Hovey/Palmieri/Strickland:1997a}. Thus for $X\in \cat T$ we deduce that
\[
f_*f^*X= B\otimes_{A} X \in \thick^d(A\otimes_{A}X) =  \thick^d(X)\,.
\]
Since we already know that $\cat U$ is locally regular over $R$, Proposition~\ref{pr:regularity} applies and yields that $\cat T$ is locally regular as an $S$-linear category.
The map $\varphi$ is finite so  Lemma~\ref{le:change-of-rings} implies that $\cat T$ is also locally regular as an $R$-linear category; in other words, it is locally regular as a tensor triangulated category.  
\end{proof}

\subsection*{Finite groups}
Let $G$ be a finite group and $k$ a field whose characteristic is positive and divides the order of $G$. Let $kG$ be the group algebra; this is a finite dimensional co-commutative Hopf algebra over $k$, so $\KInj{kG}$ has a natural structure as a tensor triangulated category, and this is rigidly compactly generated.

\begin{theorem}
\label{th:regularity-main}
The tensor triangulated category $\KInj{kG}$ is locally regular. 
\end{theorem}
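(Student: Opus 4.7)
The approach is a two-step reduction to Theorem~\ref{th:hopf}.

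The first step reduces from an arbitrary finite group $G$ to a Sylow $p$-subgroup $P$. I would apply Proposition~\ref{pr:regularity-tt} to the restriction functor $\mathrm{res}\colon\KInj{kG}\to\KInj{kP}$, which is exact, strong symmetric monoidal, and coproduct preserving, with right adjoint the induction $\mathrm{ind}_P^G=kG\otimes_{kP}-$. Since $[G:P]$ is invertible in $k$, the transfer construction exhibits the trivial module as a direct summand of $\mathrm{ind}_P^G k$, so $\bfi_{kG}k\in\thick(\mathrm{ind}_P^G\bfi_{kP}k)$, verifying hypothesis~(1). Hypothesis~(2) is the classical Evens--Venkov theorem, stating that $H^*(P,k)$ is finitely generated over $H^*(G,k)$. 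Consequently, the problem reduces to showing that $\KInj{kP}$ is locally regular for every finite $p$-group $P$.

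The second step handles a finite $p$-group $P$ by a localised reduction at each prime to the abelian $p$-group case, whose group algebras are of the form covered by Theorem~\ref{th:hopf}. Fix $\fp\in\Spec H^*(P,k)$; the aim is to show that $\thick(\gam_\fp\bfi_{kP}k)$ is strongly generated. By Quillen stratification together with the Friedlander--Pevtsova theory of $\pi$-points, there exist a finite extension $K/k$, an elementary abelian $p$-subgroup $E\leq P_K$, and a prime $\fq\in\Spec H^*(E,K)$ lying over the base change of $\fp$. Theorem~\ref{th:hopf} provides a strong generator for $\thick(\gam_\fq\bfi_{KE}K)$, and the task is to transfer this across to $\thick(\gam_\fp\bfi_{kP}k)$.

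The main obstacle is executing this transfer. Globally, the restriction $\KInj{KP}\to\KInj{KE}$ does not satisfy the unit-recovery hypothesis of Proposition~\ref{pr:regularity-tt} when $E\neq P$, so a local analysis at $\fp$ is required. The plan has two ingredients: a faithfully flat descent along $k\to K$ (exact on $\KInj$ and finite on cohomology) to identify $\gam_\fp\KInj{kP}$ with $\gam_{\fp_K}\KInj{KP}$; and the $\pi$-point $\alpha\colon K[t]/(t^p)\to KE\hookrightarrow KP$ representing $\fp$, used to compare local strata. One shows that restriction along $\alpha$ presents the stratum at $\fp$ in $\KInj{KP}$ in terms of the unique non-trivial local stratum of $\KInj{K[t]/(t^p)}$, which is strongly generated by Theorem~\ref{th:hopf}. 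Controlling the number of cone formations---so as to produce the uniform bound required by the definition of $\thick^n$ and by Proposition~\ref{pr:strong-regular}---is the technical crux of the argument.
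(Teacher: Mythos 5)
Your first step (reduction to a Sylow $p$-subgroup via Proposition~\ref{pr:regularity-tt}, using the transfer to verify hypothesis~(1) and Evens--Venkov for hypothesis~(2)) is correct, although the paper does not actually perform this reduction.

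Your second step contains the genuine gap, and you in fact identify it yourself: restriction to a single elementary abelian subgroup $E < P$ does not satisfy the unit-recovery hypothesis~(1) of Proposition~\ref{pr:regularity-tt}. The paper's key insight is to avoid the prime-by-prime, single-subgroup analysis entirely and instead apply Proposition~\ref{pr:regularity-tt} to the functor $f^*\colon \KInj{kG}\to \prod_{E\in U}\KInj{kE}$ with $U$ the (finite) set of \emph{all} maximal elementary abelian $p$-subgroups of $G$. Hypothesis~(1) is then supplied by Carlson's theorem \cite[Theorem~2.1]{Carlson:2000c}, which asserts that $k$ lies in the thick subcategory of $\dbcat{kG}$ generated by $\{\mathrm{ind}_E^G k\}_{E\in U}$; hypothesis~(2) is the finiteness of $H^*(G,k)\to\prod_{E\in U}H^*(E,k)$. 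This handles all primes $\fp$ simultaneously, sidestepping your difficulty.

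The alternative you sketch---a localised reduction at $\fp$ via $\pi$-points, comparing $\gam_\fp\KInj{KP}$ to the closed stratum of $\KInj{K[t]/(t^p)}$---is not carried out and I do not see that it works as stated. The functor $\alpha^*$ along a $\pi$-point is not monoidal as a functor between the local strata in a way that would permit applying Proposition~\ref{pr:regularity-tt}, and the comparison of $\thick^n$-bounds across $\alpha^*$ (the "technical crux" you flag) is not controlled by anything in the paper's machinery; $\pi$-points are used in Section~\ref{se:pipoints} only to characterise finite length objects, not to establish strong generation. Without the global Carlson-theorem input, the argument does not close.
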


\begin{proof}
We can assume the characteristic of $k$, say $p$, divides the order of $G$. Consider first the case where $G=E$ is an elementary abelian $p$-group of rank $r$. Its group algebra is of the form
\[
kE\cong \frac{k[z_1,\dots,z_r]}{(z_1^p,\dots,z_r^p)}
\]
Thus Theorem~\ref{th:hopf} applies, yielding that $\KInj{kE}$ is locally regular as a tensor triangulated category.  We deduce the result for an arbitrary $G$ using Proposition~\ref{pr:regularity-tt}.

Let $U$ be the (finite) collection of maximal elementary abelian $p$-subgroups of $G$. For each $E\in U$ we know that the tensor triangulated category $\KInj{kE}$ is locally regular, and this implies that the tensor triangulated category
 \[
\cat U\coloneqq \prod_{E\in U} \KInj{kE}
 \]
is locally regular as well. For a maximal elementary abelian $p$-subgroup $E\subset G$ let 
\[
f^*_E\colon \KInj{kG}\to \KInj{kE}\quad\text{and}\quad f_*^E\colon \KInj{kE}\to \KInj{kG}
\]
denote the restriction functor and induction functor, $kG\otimes_{kE}-$, respectively. Observe  that $(f^*_E,f_*^E,f^*_E)$ is an adjoint triple. Consider functors
\begin{alignat*}{2}
f^* \colon &\KInj{kG} \longrightarrow \prod_{E\in U} \KInj{kE}& 
        \quad &{\text{where } X\mapsto (f^*_EX)}\,, \\
f_* \colon  &\prod_{E\in U} \KInj{kE}  \longrightarrow \KInj{kG}& 
        \quad &{\text{where } (Y_E) \mapsto \bigoplus_{E\in U} f^E_*(Y_E)} \,.
\end{alignat*}
We claim that the adjoint pair $(f^*,f_*)$ satisfies the hypotheses of Proposition~\ref{pr:regularity-tt}.

Indeed, it is clear that each $f^*_E$, and hence $f^*$, is strong symmetric monoidal and preserves coproducts.  The unit of the tensor triangulated category $\cat U$ is $\prod_{E\in U}\bfi_E k$, where $\bfi_E k$ is the injective resolution of $k$ as a $kE$-module. It follows from \cite[Theorem~2.1]{Carlson:2000c} that $k$ is in the thick subcategory of $\dbcat{kG}$ generated by $\{f^E_*(k)\}_{E\in U}$, that is to say that,  $\bfi_G k$ is in $\thick(f_*(\one_{\cat U}))$.

The graded endomorphism rings of $\one_{\cat T}$ and $\one_{\cat U}$ are $H^*(G,k)$ and $\prod_{E\in U}H^*(E,k)$, respectively. For each $E$ the natural map $H^*(G,k)\to H^*(E,k)$ obtained by restriction is finite, so the following map  is finite as well:
\[
H^*(G,k)\to \prod_{E\in U}H^*(E,k)\,.
\]
Thus Proposition~\ref{pr:regularity-tt} applies to yield that $\KInj{kG}$ is locally regular.
\end{proof}

\subsection*{A recollement at the irrelevant ideal}
Set $\fm= H^{\geqslant 1}(G,k)$, the closed point of $\Spec H^*(G,k)$. Then the recollement  \eqref{eq:recoll-spcl} given by $V=\{\fm\}$ reads:
\[
\begin{tikzcd}[column sep = huge]
    \StMod{kG} \arrow[tail]{r} 
    	&\KInj{kG} \arrow[twoheadrightarrow,swap,yshift=1.5ex]{l}    \arrow[twoheadrightarrow,yshift=-1.5ex]{l}
		    \arrow[twoheadrightarrow]{r}[description]{q} 
        &\dcat{\Mod kG}\arrow[tail,swap,yshift=1.5ex]{l}{q_\lambda}\arrow[tail,yshift=-1.5ex]{l}{q_\rho}
  \end{tikzcd}
\]
We have $\gam_\fm=q_\lambda\circ q$, and therefore $q$ induces an equivalence 
\[
\gam_\fm \KInj{kG}\iso \dcat{\Mod kG}\,.
\]
The left adjoint $q_\lambda$ sends $M$ in $\dbcat{kG}$ to $\mathbf{p}M$, the projective resolution of $M$, which is $\gam_\fm(\bfi M)$; see~\cite[\S6]{Benson/Krause:2008a}. In particular, $q$ restricts to an equivalence 
\begin{equation}\label{eq:irrelevant}
\gam_\fm(\comp{\KInj{kG}})\iso \dbcat{kG}\,.
\end{equation}
The triangulated category $\dbcat{kG}$ has a strong generator, namely, the direct sum of representatives of isomorphism classes of simple $kG$-modules, viewed as a complex concentrated in a single degree. Thus if $M$ is a generator of $\dbcat{kG}$, the complex $\gam_\fm(\bfi M)$ is a strong generator for $\thick(\gam_\fm(\bfi M))$.

\subsection*{Local dualisable objects}

Putting together the result of the previous sections we are able to describe the dualisable objects for each point. The formulation of the following result requires the choice of a compact generator, but it is clear that it does not depend on the choice. A natural candidate is the injective resolution of the direct sum of representatives of isomorphism classes of simple $kG$-modules.

\begin{theorem}
\label{th:kinj-main}
Let $G$ be a finite group and $k$ a field. Set $\cat T=\KInj{kG}$ and $R= H^*(G,k)$.  Fix a compact generator $C$ for $\cat T$ and a point $\fp$ in $\Spec R$. For each $X$ in $\gam_{\fp} {\cat T}$, the following conditions are equivalent.
\begin{enumerate}[\quad\rm(1)]
\item $X$ is dualisable in $\gam_\fp\cat T$;
\item $\bcd X$ is dualisable in $\lam^\fp\cat T$;
\item $X$ is in $\thick(\gam_\fp C_\fp)\subseteq \gam_\fp\cat T$;
\item $D_\fp X$ is in $\thick(\Lambda^\fp C_\fp)\subseteq\Lambda^\fp\cat T$;
\item $\Hom^*_{\cat T}(C,X)$ is artinian over $R_\fp$;
\item $\Hom^*_{\cat T}(C,D_\fp X)$ is noetherian over $R\phat$;
\item $\Hom^*_{\cat T}(\kos{C}{\fp},X)$ has finite length over $R_\fp$;
\item $X\to(D\phat)^2 X$ is an isomorphism.
\end{enumerate} 
\end{theorem}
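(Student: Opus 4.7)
The plan is to deduce this theorem by consolidating the general results from Sections~\ref{se:bc-duality} and \ref{se:regularity}, using as the single new input the local regularity of $\KInj{kG}$ established in Theorem~\ref{th:regularity-main}. With $\cat T=\KInj{kG}$ and $R=\End^*_{\cat T}(\one)=H^*(G,k)$, the category $\cat T$ is rigidly compactly generated, noetherian over $R$, and locally regular, so Propositions~\ref{pr:regular-gorenstein} and \ref{pr:regular-gorenstein-complete} apply at every $\fp\in\Spec R$.

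First I would establish the equivalences among conditions (1), (3), (5), (7), and (8), each of which concerns $X$ itself in $\gam_\fp\cat T$. Since $C$ is a compact generator of $\cat T$, one has $\thick(\gam_\fp C_\fp)=\thick(\gam_\fp C)=\thick(\gam_\fp\comp{\cat T})$, so (3) here matches the corresponding condition in Proposition~\ref{pr:regular-gorenstein}, and that proposition immediately yields (1)$\Leftrightarrow$(3)$\Leftrightarrow$(5)$\Leftrightarrow$(8). For (7), Lemma~\ref{le:p-torsion} shows that $\kos{C}{\fp}\cong\kos{C_\fp}{\fp}$ is a compact generator of $\gam_\fp\cat T$ and that every object of $\comp{(\gam_\fp\cat T)}$ is finitely built from it. Since finite length over $R_\fp$ is preserved by extensions and retracts, (7) is equivalent to requiring that $\Hom^*_{\cat T}(D,X)$ has finite length over $R_\fp$ for every $D\in\comp{(\gam_\fp\cat T)}$, which is condition~(3) of Proposition~\ref{pr:regular-gorenstein}.

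Next I would handle conditions (2), (4), (6), each of which concerns $D_\fp X$ in $\lam^\fp\cat T$. By Theorem~\ref{th:bc-duality} together with the corollary following Proposition~\ref{pr:regular-gorenstein-complete}, $D\phat$ restricts to a contravariant equivalence between the dualisable objects of $\gam_\fp\cat T$ and those of $\lam^\fp\cat T$, identifying $\thick(\gam_\fp\comp{\cat T})$ with $\thick(\lam^\fp\comp{\cat T}_\fp)$. This bridges the two batches, giving (1)$\Leftrightarrow$(2) and (3)$\Leftrightarrow$(4). The remaining equivalence (2)$\Leftrightarrow$(4)$\Leftrightarrow$(6) is then Proposition~\ref{pr:regular-gorenstein-complete} applied to $D_\fp X\in\lam^\fp\cat T$. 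No serious obstacle arises once Theorem~\ref{th:regularity-main} is in hand; the entire argument is bookkeeping against the general machinery. The only mildly delicate point is condition (7), which tests finite length against one specific compact object rather than against all of $\comp{(\gam_\fp\cat T)}$, and this is dissolved by the explicit generating description in Lemma~\ref{le:p-torsion}.
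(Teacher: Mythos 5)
Your proposal is correct and follows essentially the same route as the paper: invoke Theorem~\ref{th:regularity-main} to make $\KInj{kG}$ locally regular, read off the $\gam_\fp$-side equivalences from Proposition~\ref{pr:regular-gorenstein}, read off the $\lam^\fp$-side equivalences from Proposition~\ref{pr:regular-gorenstein-complete}, and bridge the two sides via Brown--Comenetz duality. The only cosmetic difference is that you bridge using (1)$\Leftrightarrow$(2) and (3)$\Leftrightarrow$(4) via the corollary to Proposition~\ref{pr:regular-gorenstein-complete}, whereas the paper bridges directly via (5)$\Leftrightarrow$(6) using Theorem~\ref{th:bc-duality}; both rest on the same machinery and are equally valid.
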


\begin{proof}
As noted before, the tensor triangulated category $\cat T$ is rigidly compactly generated. It is also locally regular, by Theorem~\ref{th:regularity-main}. Given this, the equivalences (1)$\Leftrightarrow$(3)$\Leftrightarrow$(5)$\Leftrightarrow$(7)$\Leftrightarrow$(8) are contained in Proposition~\ref{pr:regular-gorenstein}. Because the Brown--Comenetz dual $\bcd X$ is in $\lam^\fp\cat T$, the equivalence (5)$\Leftrightarrow$(6) follows from Theorem~\ref{th:bc-duality}. Finally, the equivalences (2)$\Leftrightarrow$(4)$\Leftrightarrow$(6) are contained in Proposition~\ref{pr:regular-gorenstein-complete}.
\end{proof}

\subsection*{The stable module category}
The stable module category, $\StMod{kG}$ is not noetherian in general, for the endomorphism of its unit is the Tate cohomology ring of $kG$. Thus we cannot apply the results of the previous sections directly to this category. However, one can view $\StMod{kG}$ as an $R=H^*(G,k)$-linear category, and then its support consists of all $\fp\in \Spec R$, except the maximal ideal $H^{\geqslant 1}(G,k)$, and for such $\fp$ the embedding $\StMod{kG}\to \KInj{kG}$ via the category of acyclic complexes induces an equivalence of tensor triangulated categories
\[
(\StMod{kG})_\fp \simeq {\KInj{kG}}_\fp\,;
\]
see \cite[Lemma~2.6]{Benson/Iyengar/Krause/Pevtsova:2019a}. Since all the conditions in Theorem~\ref{th:kinj-main} are local at $\fp$, one gets Theorem~\ref{th:finitelength} stated in the introduction.

\subsection*{Pure-injectives} A module  is \emph{pure-injective}, or \emph{algebraically compact} if it is injective with respect to pure-exact sequences, where an exact sequence is \emph{pure-exact} if it remains exact after tensoring with any module. A module is \emph{$\Sigma$-pure-injective} if any coproduct of
copies of it is pure-injective. There are many equivalent formulations, see for example \cite[\S4.4.2]{Prest:2009a}.

\begin{corollary}
  The dualisable modules in $\gam_\fp\StMod kG$ are  $\Sigma$-pure-injective.
\end{corollary}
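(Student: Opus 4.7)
The approach is to reduce the statement to Proposition~\ref{pr:pure-inj}, which asserts that in any compactly generated $R$-linear triangulated category, any object artinian over $R$ is automatically $\Sigma$-pure-injective. Viewing $\StMod{kG}$ as a compactly generated $R$-linear triangulated category with $R=H^*(G,k)$, the task reduces to showing that every dualisable object $X$ in $\gam_\fp\StMod{kG}$ is artinian over $R$ \emph{as an object of the ambient category $\StMod{kG}$}; that is, $\Hom^*_{\StMod{kG}}(C,X)$ is an artinian $R$-module for every compact $C\in\stmod{kG}$.

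Fix such a $C$. The key observation is that since $X$ lies in $\gam_\fp\StMod{kG}$, and hence is $\fp$-local and $V(\fp)$-torsion, the natural maps $C\to C_\fp$ and $\gam_\fp C_\fp\to C_\fp$ induce isomorphisms
\[
\Hom^*_{\StMod{kG}}(C,X)\cong\Hom^*_{\StMod{kG}}(\gam_\fp C,X),
\]
because the localised summand contributes nothing: morphisms from any $V(\fp)$-local object to a $V(\fp)$-torsion one vanish. So it suffices to show that $\Hom^*_{\StMod{kG}}(\gam_\fp C,X)$ is artinian over $R$.

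Now invoke the equivalence $(\StMod{kG})_\fp\simeq(\KInj{kG})_\fp$ recorded just before the statement: $X$ is equally dualisable in $\gam_\fp\KInj{kG}$, while $\gam_\fp C$ is a compact object of $\KInj{kG}$. Theorem~\ref{th:kinj-main}, specifically the equivalence \textrm{(1)}$\Leftrightarrow$\textrm{(5)}, then yields that $\Hom^*_{\KInj{kG}}(\gam_\fp C,X)$ is artinian over $R_\fp$. Since this module is $\fp$-torsion, the notions of artinian over $R_\fp$ and over $R$ coincide, and the required artinianness follows. The only non-routine input is the cohomological characterisation of local dualisable objects furnished by the local regularity of $\KInj{kG}$ (Theorem~\ref{th:regularity-main} and its cohomological consequence, Theorem~\ref{th:kinj-main}); once those are in hand, no substantial obstacle remains and Proposition~\ref{pr:pure-inj} completes the argument.
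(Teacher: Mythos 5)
Your overall strategy is the right one — show that a dualisable object $X$ in $\gam_\fp\StMod kG$ has artinian cohomology against compacts and then invoke Proposition~\ref{pr:pure-inj} — and this matches the paper's implicit reasoning. But the reduction step you insert is incorrect. You claim
\[
\Hom^*_{\StMod kG}(C,X)\cong\Hom^*_{\StMod kG}(\gam_\fp C,X)
\]
on the grounds that ``morphisms from any $V(\fp)$-local object to a $V(\fp)$-torsion one vanish.'' This is the wrong orthogonality: in the recollement the vanishing is $\Hom(\text{torsion},\text{local})=0$, not the reverse, and the claimed isomorphism is in fact false. For example, in $\cat T=\bfD(k[x])$ with $\fp=(x)$, take $C=k[x]$ and $X=\gam_\fp k[x]$ (the unit of $\gam_\fp\cat T$, hence dualisable there). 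One has $\Hom^*(C,X)=H^*_\fp(k[x])$, the injective hull of $k$, while $\Hom^*(\gam_\fp C,X)=\End^*(\gam_\fp k[x])\cong k[[x]]$; these disagree, and the latter is not even artinian. You also assert that $\gam_\fp C$ is compact in $\KInj{kG}$ — this fails in general, since $\gam_\fp$ does not preserve compactness. A further small slip: for $\fp$-torsion modules, being artinian over $R_\fp$ does not imply being artinian over $R$ when $\fp$ is not maximal (consider $k(y)$ over $k[x,y]$ at $\fp=(x)$); what one really uses is the descending chain condition on subgroups of finite definition, which are $R_\fp$-submodules when $X$ is $\fp$-local, so artinianness over $R_\fp$ suffices and one should apply Proposition~\ref{pr:pure-inj} with the $R_\fp$-linear structure.

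The good news is that the detour through $\gam_\fp C$ is unnecessary. Theorem~\ref{th:kinj-main}(1)$\Leftrightarrow$(5) (equivalently Proposition~\ref{pr:dualising-hom}(1)), together with the local equivalence $(\StMod kG)_\fp\simeq(\KInj kG)_\fp$, directly gives that $\Hom^*_{\cat T}(C,X)$ is artinian over $R_\fp$ for every compact $C$ — no comparison with $\gam_\fp C$ needed. One then applies Proposition~\ref{pr:pure-inj} and finally observes, as the paper does, that a $kG$-module is pure-injective as a module if and only if it is pure-injective as an object of $\StMod kG$; you omit this last translation, which the corollary requires since it asserts pure-injectivity of modules, not merely of objects of a triangulated category.
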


\begin{proof}
This follows from Proposition~\ref{pr:pure-inj} once we observe that under the canonical functor $\Mod kG\to\StMod kG$ a $kG$-module is pure-injective if and only if it is pure-injective as an object of the triangulated category $\StMod kG$.
\end{proof}

\begin{example}
  Set $G=\bbZ/2\times\bbZ/2$ and let $k$ be a field of characteristic two. Set $\cat T=\StMod kG$. Then one has 
\[ 
H^*(G,k)=\Ext^*_{kG}(k,k)\cong k[\zeta_1,\zeta_2] 
\]
with $\deg(\zeta_1)=1=\deg(\zeta_2)$. Let $\fp\in\Proj H^*(G,k)\cong\bbP^1_k$ be a closed point and choose a homogeneous irreducible element $\zeta$ of degree $d$  generating $\fp$.  The bijection
\[
\uHom_{kG}(k,\Omega^{-r}(k)) \longiso \Ext^r_{kG}(k,k)\qquad r\ge 1
\]
gives for each power $\zeta^n$ a monomorphism $k\to\Omega^{-nd}(k)$ whose cokernel is $\kos k{\fp^n}$. It is just a shift of the Carlson module associated to $\zeta^n$. Thus for each $n$ there is an exact sequence
\[
0\lra k\lra \Omega^{-nd}(k)\lra \kos k{\fp^n} \lra 0
\]
and multiplication by $\zeta$ induces monomorphisms $\kos k{\fp^1} \rightarrowtail \kos k{\fp^2}\rightarrowtail\cdots$ with
\[
\kos k{\fp^\infty}\coloneqq \colim_{n\ge 1} \kos k{\fp^n}\cong \gam_\fp k\,.
\]
Applying Spanier--Whitehead duality $\Hom_k(-,k)$ yields a sequence of epimorphisms $\cdots \twoheadrightarrow \kos k{\fp^2} \twoheadrightarrow \kos k{\fp^1}$
with
\[
\lim_{n\ge 1} \kos k{\fp^n} \cong \lam^\fp k\,.
\]
For a class $\cat C$ of objects in an additive category let $\add\cat C$ denote the full subcategory consisting of direct summands of finite direct sums of objects in $\cat C$. Then we have 
\[
\comp{(\gam_\fp\cat T)}=\add\{ \kos k{\fp^n} \mid 1\le  n<\infty\}=\comp{(\lam^\fp\cat T)}\,.
\]
On the other hand there are more dualisable objects. Let $\cat S^{\mathrm d}$ denote the subcategory of dualisable objects in a tensor triangulated category $\cat S$.
 \begin{align*}
& (\gam_\fp\cat T)^{\mathrm d}=\add\{ \kos k{\fp^n} \mid 1\le n\le\infty\} \\
 &(\lam^\fp\cat T)^{\mathrm d}=\add(\{ \kos k{\fp^n} \mid 1\le n<\infty\}\cup\{\lam^\fp k\})\,.
 \end{align*}
\end{example}

\subsection*{Balmer spectrum}
The dualisable objects in $\gam_\fp\KInj{kG}$ form an essentially small tensor triangulated category which we denote $(\gam_\fp\KInj{kG})^{\mathrm d}$.
So one may wonder about its associated Balmer spectrum~\cite{Balmer:2005a}.

\begin{question}
Let $G$ be a finite group and $\fp$ in $\Spec H^*(G,k)$. For the tensor unit $\one=\gam_\fp(\bfi k)$ in $(\gam_\fp\KInj{kG})^{\mathrm d}$ we have an isomorphism $\End^*(\one)\cong H^*(G,k)\phat$, by \cite[Corollary~5.4]{Benson/Iyengar/Krause/Pevtsova:2019a}. 
Thus there is a canonical map
\[
\bSpec (\gam_\fp\KInj{kG})^{\mathrm d} \lra \Spec H^*(G,k)\phat
\] 
which is continuous and surjective \cite{Balmer:2010a}. When is this map a homeomorphism?
We do not know if it holds for general $\fp$, but we can answer this in the affirmative when $G$ is an elementary abelian $p$-group, using a reduction to the case of the derived category of a commutative ring. For a general group $G$, the question has an affirmative answer when $\fp=H^{\geqslant 1}(G,k)$, for the category of dualisable objects at this prime is equivalent to $\dbcat{kG}$ via \eqref{eq:irrelevant}. In this case the homeomorphism above is established in \cite{Benson/Carlson/Rickard:1995a,Benson/Iyengar/Krause:2011b}, using that $H^*(G,k)\cong H^*(G,k)\phat$ for this $\fp$.
\end{question}

\section{\textpi-points} 
\label{se:pipoints}
The main result in this section is the following characterisation of the finite length property in the stable module category of a finite group scheme in terms of the restrictions to $\pi$-points.  The new notions and constructions that appear in the statement, including that of a $\pi$-point, are recalled further below.
 
\begin{theorem}
\label{th:pi-main} 
Let $k$ be a field of positive characteristic $p$ and $G$ a finite group scheme over $k$.
Fix $\fp$ in $\Proj H^*(G,k)$ and a $\pi$-point $\alpha\colon K[t]/(t^p) \to KG$ for $\fp$. Let $\fm$ be the corresponding closed point in $\Proj H^*(G,K)$. For any $M$ in $\gam_\fp \StMod kG$ the following conditions are equivalent.
\begin{enumerate}[\quad\rm(1)] 
\item $M$ has finite length in $\gam_\fp\StMod kG$;
\item $\gam_\fm (M_K)$ finite length in $\gam_\fm\StMod KG$;
\item $\alpha^*(M_K)$ is in $\stmod K[t]/(t^p)$.
\end{enumerate} 
\end{theorem}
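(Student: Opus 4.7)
The plan is to recast all three conditions as dualisability statements in appropriate local strata, and then transfer dualisability between them via two strong symmetric monoidal tt-functors. Since $\fp$ is non-maximal, $\gam_\fp\StMod{kG}\simeq\gam_\fp\KInj{kG}$ and likewise for $\gam_\fm$, both of which are locally regular by Theorem~\ref{th:regularity-main}. By Proposition~\ref{pr:regular-gorenstein}, finite length at $\fp$ (resp.\ $\fm$) is equivalent to dualisability in $\gam_\fp\KInj{kG}$ (resp.\ $\gam_\fm\KInj{KG}$). Condition~(3) is likewise a dualisability statement, since $\stmod A_\alpha=\comp{\StMod{A_\alpha}}$ coincides with the dualisable objects of $\StMod{A_\alpha}$, where $A_\alpha=K[t]/(t^p)$.

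For $(1)\Leftrightarrow(2)$, the plan is to use the base-change functor $\sfF=K\otimes_k-\colon\StMod{kG}\to\StMod{KG}$: it is strong symmetric monoidal, preserves coproducts, and the induced cohomology ring map $H^*(G,k)\to H^*(G,K)$ is faithfully flat with $\fm$ above $\fp$. Hence $\sfF$ sends $\gam_\fp\StMod{kG}$ into the $V(\fp)$-torsion subcategory of $\StMod{KG}$, and $\gam_\fm\circ\sfF$ lands in $\gam_\fm\StMod{KG}$ and preserves dualisability by Proposition~\ref{pr:dualisable}. This proves $(1)\Rightarrow(2)$. The converse is faithfully flat descent of local finite length along the extension of graded local rings $H^*(G,k)_\fp\hookrightarrow H^*(G,K)_\fm$, applied to the cohomology modules $\Ext^*_{kG}(C,M)$ for $C\in\comp{\StMod{kG}}$.

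For $(2)\Leftrightarrow(3)$, use the restriction functor $\alpha^*\colon\StMod{KG}\to\StMod{A_\alpha}$ along the flat algebra map $\alpha$: it is strong symmetric monoidal and preserves coproducts, and by the defining property of a $\pi$-point representing $\fm$ the induced map on cohomology sends $\fm$ to the unique non-maximal homogeneous prime of $H^*(A_\alpha,K)$. One verifies that $\alpha^*$ annihilates $L_{V(\fm)}(M_K)_\fm$ in $\StMod{A_\alpha}$ (the support-detection property of $\pi$-points), so $\alpha^*(M_K)\cong\alpha^*(\gam_\fm(M_K))$ and $(2)\Rightarrow(3)$ follows from preservation of dualisability. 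For $(3)\Rightarrow(2)$ one reduces to Carlson's theorem~\cite{Carlson:2024a} for closed points of elementary abelian groups: factor $\alpha$ through a quasi-elementary subgroup scheme of $G_K$ via Friedlander--Pevtsova's classification of $\pi$-points, transfer the finite-dimensionality along the induction-restriction adjunction, and invoke Carlson's result.

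The main obstacle is $(3)\Rightarrow(2)$. Forward transfer of dualisability under tt-functors is formal, but detecting dualisability at $\fm$ from the restriction along a single $\pi$-point relies on Carlson's theorem together with a careful reduction. The technical heart lies in ensuring that the local regularity condition and the strong generator of $\thick(\gam_\fm\comp{\StMod{KG}})$ provided by Corollary~\ref{co:kinj-main} interact correctly with induction and restriction along a quasi-elementary subgroup scheme, so that a single representative $\pi$-point does indeed suffice to certify dualisability of objects in $\gam_\fm\StMod{KG}$.
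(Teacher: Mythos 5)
Your proposal relies critically on Theorem~\ref{th:regularity-main} (local regularity of $\KInj{kG}$) and its consequence Proposition~\ref{pr:regular-gorenstein} to identify finite length objects with dualisable objects. But Theorem~\ref{th:regularity-main} is only established in the paper for $G$ a \emph{finite group}: its proof uses Carlson's result that $k$ lies in the thick subcategory of $\dbcat{kG}$ generated by modules induced from elementary abelian subgroups, which is a statement about finite groups. Theorem~\ref{th:pi-main} is stated — and the paper's proof is deliberately written to work — for an arbitrary finite group \emph{scheme} $G$. The introduction even emphasises this: the equivalence $(1)\Leftrightarrow(4)$ of Theorem~\ref{th:finitelength} ``applies equally to general finite group schemes and not only to finite groups''. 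By conflating ``finite length'' with ``dualisable'' at the outset, you lose exactly this generality, since the local regularity machinery is not available. The paper's proof of $(1)\Leftrightarrow(2)$ instead proceeds by a direct chain of length computations using Koszul objects (Lemmas~\ref{le:koszul-tt} and~\ref{le:koszul-ca}) and the flatness of $H^*(G,k)\to H^*(G,K)$, with no reference to dualisability or regularity.

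There is a second gap in your $(3)\Rightarrow(2)$. You recognise this is the hard direction, but the route you sketch — factoring $\alpha$ through a quasi-elementary subgroup scheme and invoking Carlson's theorem from~\cite{Carlson:2024a} — is not how the paper handles it, and it is unclear how it would close the argument: Carlson's theorem is only stated for closed points of elementary abelian groups, and you still need to descend from the subgroup scheme back to $G_K$ at a non-closed prime. What actually makes $(2)\Leftrightarrow(3)$ work in the paper is Lemma~\ref{le:compact-generator}: the thick subcategory generated by the single compact object $\alpha_*K$ is all of $\comp{(\gam_\fm\StMod{KG})}$. Combined with the adjunction $\Hom^*_{\StMod{KG}}(\alpha_*K, M_K)\cong\Hom^*(K,\alpha^*(M_K))$ and Lemma~\ref{le:cyclic}, this turns the finite length condition at $\fm$ into precisely the stable finite-dimensionality of $\alpha^*(M_K)$, in both directions at once. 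Your proposal omits this generation statement, which is the technical heart of the equivalence, and without it the ``reflection'' of dualisability along $\alpha^*$ is not justified.
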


Condition (3) is the statement that $K\otimes_kM$ is stably finite dimensional when restricted to $K[t]/(t^p)$; that is to say, isomorphic to a direct sum of a finite dimensional module and a free module. A corollary is that condition (3) is independent of the choice of a $\pi$-point representing $\fp$. 
When $G$ is a finite group, conditions (1) and (2) are equivalent to dualisability of the relevant modules; see Theorem~\ref{th:kinj-main}.

\subsection*{\textpi-points} 
We recall some facts about $\pi$-points; see \cite{Friedlander/Pevtsova:2007a},   \cite{Benson/Iyengar/Krause/Pevtsova:2018a}, \cite{Benson/Iyengar/Krause/Pevtsova:2017a} for details. Let $k$ be a field of positive characteristic $p$ and $G$ a finite group scheme; thus $kG$, the group algebra of $G$, is a finite dimensional co-commutative Hopf algebra over $k$. A $\pi$-point of $G$ is a flat map of $K$-algebras 
\[
\alpha \colon K[t]/(t^p) \longrightarrow KG
\]
defined over some field extension $K/k$ and factoring through the group algebra of a unipotent abelian subgroup scheme of $G_K$; when $G$ is a finite group, this means that $\alpha$ factors through $KH$ where $H$ is an  abelian $p$-subgroup of $G$. One gets adjoint functors
\[
\begin{tikzcd}[column sep = huge]
   \StMod K[t]/(t^p)  \arrow[yshift=.75ex]{r}{\alpha_*}  
        &\StMod KG \arrow[yshift=-.75ex]{l}{\alpha^*}    
            \arrow[swap,yshift=-.75ex]{r}{\downarrow_{kG}}
        &\StMod kG \arrow[swap,yshift=.75ex]{l}{K\otimes_k-}\,.
    \end{tikzcd}
\]
where $\alpha_*\coloneqq KG\otimes_{K[t]/(t^p)}-$ and $\alpha^*$ is restriction along $\alpha$, and $\downarrow_{kG}$ is restriction along $kG\to KG$. For a $kG$-module $M$ set $M_K=K\otimes_k M$.

A $\pi$-point $\alpha\colon K[t]/(t^p) \to KG$ induces a map in cohomology 
\[
H^*(G,k)\coloneqq \Ext^*_{kG}(k,k) \longrightarrow \Ext^*_{K[t]/(t^p)}(K,K)
\]
whose kernel yields a point on $\Proj H^*(G,k)$. Two $\pi$-points are \emph{equivalent} if they correspond to the same homogeneous prime ideal $\fp$. This gives a one-to-one correspondence between equivalence classes of $\pi$-points and  points in $\Proj H^*(G,k)$, satisfying the following condition:
for any $\pi$-point $\alpha$ in the equivalence class corresponding to $\fp$ and 
for any $kG$-module $M$, one has
\[
\fp \in \supp(M) \Longleftrightarrow \alpha^*(M_K) \ne 0 \text{ in } \StMod K[t]/(t^p)
\] 
where $\supp(M)$ equals the set of points  $\fp\in\Proj H^*(G,k)$ such that $\gam_\fp M\neq 0$.

\subsection*{Finite length}
Our goal is a characterisation of finite length objects in the local category  $\gam_\fp \StMod kG$ via $\pi$-points.
We begin with a simple observation.

\begin{lemma} 
\label{le:cyclic} 
Let $K$ be a field of positive characteristic $p$ and $A\coloneqq K[t]/(t^p)$. Let $\fm$ be the unique point in $\Proj \Ext^*_{A}(K,K)$.  An $A$-module $N$ is in $\stmod A$ if and only if 
the $\Ext^*_{A}(K,K)_\fm$-module $\Ext^*_{A}(K,N)_\fm$ has finite length. \qed
\end{lemma}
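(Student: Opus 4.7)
The strategy is to decompose $N$ into cyclic indecomposables and to compute $\Ext^*_A(K,-)$ on each summand directly. Since $A=K[t]/(t^p)$ is a commutative artinian principal ideal ring, a classical theorem of K\"othe yields a direct sum decomposition
\[
N\;\cong\;\bigoplus_{i=1}^{p}\bigl(A/(t^i)\bigr)^{(I_i)}
\]
for some index sets $I_1,\dots,I_p$, and this decomposition is unique up to the cardinalities $|I_i|$ by the Krull--Remak--Schmidt--Azumaya theorem since each summand has a local endomorphism ring. The only projective indecomposable summand is $A/(t^p)=A$, so $N$ lies in $\stmod A$ (viewed inside the stable category $\StMod A$) if and only if each $I_i$ with $1\le i\le p-1$ is finite.

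Next I would compute $\Ext^*_A(K,A/(t^i))$ using the two-periodic minimal projective resolution of $K$ together with the syzygy relation $\Omega(A/(t^i))\cong A/(t^{p-i})$. For $i=p$ the module is projective, so $\Ext^*_A(K,A)$ is concentrated in degree zero; since the positive-degree polynomial generator of $R\coloneqq\Ext^*_A(K,K)$ (modulo nilpotents) acts as zero on anything concentrated in a single degree, and $\fm$ is the unique point of $\Proj R$, the localisation $\Ext^*_A(K,A)_\fm$ vanishes. For $1\le i\le p-1$ the resolution is two-periodic, so $\Ext^n_A(K,A/(t^i))$ is one-dimensional over $K$ for every $n\ge 0$ and the polynomial generator of $R$ acts as a periodicity isomorphism on sufficiently large degrees; thus $\Ext^*_A(K,A/(t^i))_\fm$ is a finitely generated $R_\fm$-module, and has finite positive length $\ell_i$ because the graded maximal ideal of $R_\fm$ is nilpotent (trivially so when $p=2$, as $R_\fm$ is then a graded field).

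Since $K$ is a finitely presented $A$-module, $\Ext^*_A(K,-)$ commutes with arbitrary coproducts, and localisation does as well, yielding
\[
\Ext^*_A(K,N)_\fm\;\cong\;\bigoplus_{i=1}^{p-1}\bigl(\Ext^*_A(K,A/(t^i))_\fm\bigr)^{(I_i)}.
\]
The length of the right-hand side over $R_\fm$ equals $\sum_{i=1}^{p-1}\ell_i|I_i|$, which is finite if and only if each $I_i$ with $1\le i\le p-1$ is finite, that is, if and only if $N$ lies in $\stmod A$ inside $\StMod A$. The step that demands the most care is the appeal to K\"othe's decomposition theorem for arbitrary, possibly uncountably generated, $A$-modules; once this structural input is available, the rest is a direct computation with the periodic resolution.
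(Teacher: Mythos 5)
Your proof is correct. The paper itself states this lemma with a \verb|\qed| and no argument, treating it as evident, so there is no internal proof to compare against; your write-up fills that gap completely. The decomposition via K\"othe's theorem (every module over the uniserial artinian ring $A=K[t]/(t^p)$ is a direct sum of cyclics $A/(t^i)$, with multiplicities determined by Krull--Remak--Schmidt--Azumaya) is exactly the right structural input, and your identification of $\stmod A$ inside $\StMod A$ with ``each $I_i$, $1\le i\le p-1$, finite'' matches the paper's intended reading of condition~(3) in Theorem~\ref{th:pi-main}, namely ``finite dimensional plus projective.'' The computation with the two-periodic minimal resolution is right: $\Ext^*_A(K,A)_\fm=0$ because the periodicity element is inverted but acts as zero on the degree-zero piece; and for $1\le i\le p-1$ one gets $\Ext^n_A(K,A/(t^i))\cong K$ for all $n\ge 0$ with the periodicity element acting invertibly, so the localisation is a nonzero finitely generated module over $R_\fm$, which has finite positive length since the graded maximal ideal of $R_\fm$ is nilpotent (indeed $R_\fm\cong K[y^{\pm 1}]\otimes\Lambda(x)$ for $p>2$ and is a graded field for $p=2$). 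The passage through the coproduct uses only that $K$ is finitely presented over $A$ and that localisation is exact and commutes with direct sums, both unproblematic. The one place you rightly flag as delicate---K\"othe's theorem for arbitrary, possibly infinitely generated modules---is a genuine classical fact for artinian principal ideal rings, so the appeal is legitimate.
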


We require also the following observation. Recall the convention about Koszul objects for graded modules, discussed above Lemma~\ref{le:koszul-tt}.

\begin{lemma}
\label{le:koszul-ca}
   Let $R\to S$ be a flat map of graded-commutative noetherian rings. Fix $\fp\in\Spec R$ and a point $\fq$ in $\Spec S$ lying over $\fp$. Let $X$ be an $R_\fp$-module such that $\fp^s X=0$ for some integer $s$. The following conditions are equivalent:
\begin{enumerate}[\quad\rm(1)]
    \item $\length_{R_\fp}X <\infty$;
    \item $\length_{S_{\fq}} (k(\fq)\otimes_R X)<\infty$;
    \item $\length_{S_{\fq}} H^*(\kos{(S\otimes_RX)_{\fq}}{\fq})<\infty$.
\end{enumerate}
\end{lemma}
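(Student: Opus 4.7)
The plan is to prove the cycle $(1)\Leftrightarrow(2)$ via graded Nakayama together with a base change identification, then obtain $(1)\Rightarrow(3)\Rightarrow(2)$ by dévissage along a composition series of $X$ and by reading off $H^0$ of the Koszul complex.

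For $(1)\Leftrightarrow(2)$, I would first observe that, since $\fp^s X=0$, the graded version of Nakayama's lemma for the local ring $R_\fp$ shows that $X$ is finitely generated over $R_\fp$ if and only if $X/\fp X$ is finite-dimensional over $k(\fp)$, and in that case $X$ automatically has finite length. Since $\fq$ contracts to $\fp$, the $R_\fp$-action on $k(\fq)$ factors through $k(\fp)$, so there are natural identifications
\[
k(\fq)\otimes_R X\;\cong\;k(\fq)\otimes_{R_\fp}X\;\cong\;k(\fq)\otimes_{k(\fp)}(X/\fp X)
\]
of $S_\fq$-modules. As this module is killed by $\fq$, its length over $S_\fq$ coincides with its $k(\fq)$-dimension, which in turn equals $\dim_{k(\fp)}(X/\fp X)$; hence (1) and (2) are equivalent.

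Next, $(1)\Rightarrow(3)$ will be established by dévissage. A finite-length $R_\fp$-module is a finite iterated extension of copies of $k(\fp)$; the functor $S_\fq\otimes_{R_\fp}-$ is exact as $R\to S$ is flat, and the Koszul construction is exact in its module argument, so the induced long exact sequences together with additivity of finite length reduce the claim to $X=k(\fp)$. In that case $(S\otimes_R X)_\fq\cong S_\fq/\fp S_\fq$ is noetherian (being a quotient of a localization of the noetherian ring $S$), and $\kos{(S_\fq/\fp S_\fq)}{\fq}$ is a bounded complex of finitely generated modules over $S_\fq/\fp S_\fq$. Its cohomology is therefore finitely generated over $S_\fq/\fp S_\fq$ and is annihilated by $\fq$, so each $H^i$ is a finite-dimensional $k(\fq)$-vector space and hence of finite length over $S_\fq$. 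Finally, $(3)\Rightarrow(2)$ is immediate: with $M\coloneqq(S\otimes_R X)_\fq$ one has $H^0(\kos M\fq)\cong M/\fq M\cong k(\fq)\otimes_R X$, so finite length of $H^*$ forces finite length of $H^0$.

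The main point to verify carefully is that the flat base change $S_\fq\otimes_{R_\fp}-$ commutes with the iterated Koszul construction, so that a filtration of $X$ by copies of $k(\fp)$ yields long exact sequences of Koszul cohomology groups to which additivity of length applies. This is however routine because the Koszul construction is built from mapping cones of multiplication maps, and these commute with any exact functor.
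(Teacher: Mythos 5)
Your proof is correct and, for the equivalences $(1)\Leftrightarrow(2)$ and $(3)\Rightarrow(2)$, follows essentially the same chain as the paper: $\length_{R_\fp}X<\infty$ is equivalent (via $\fp^sX=0$ and Nakayama) to finiteness of $X/\fp X$ over $k(\fp)$, and base change to $k(\fq)$ preserves this; for $(3)\Rightarrow(2)$ one reads off $H^0$ of the Koszul complex.

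For $(1)\Rightarrow(3)$ your dévissage along a composition series of $X$ is a valid alternative, but it is an avoidable detour. The paper argues directly: once $X$ has finite length over $R_\fp$ it is finitely generated, so by flat base change $S\otimes_RX$ is finitely generated over $S$; the Koszul complex on generators of $\fq$ is then a bounded complex of finitely generated $S$-modules, so its cohomology is finitely generated over $S$ and annihilated by $\fq$, hence of finite length after localization at $\fq$. Your reduction to $X=k(\fp)$ reproduces exactly this argument in the special case and then assembles the general case from it via long exact sequences; nothing is gained. (Also a small precision: in the graded setting one should say ``graded shifts of $k(\fp)$'' rather than ``copies of $k(\fp)$'', though this does not affect the argument since the Koszul construction commutes with shifts.)

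One point you flag at the end deserves care in the write-up: what the dévissage needs is not that the Koszul construction commutes with an exact functor, but that a short exact sequence of modules yields, after tensoring with the Koszul complex (a complex of free modules), a short exact sequence of complexes and hence a long exact sequence in cohomology. Your conclusion is right, but the justification as stated conflates two different notions of exactness.
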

   
\begin{proof}
We can assume $R$ and $S$ are local, with maximal ideals $\fp$ and $\fq$, respectively. 
Let $k$ and $l$ be the corresponding residue fields. 

 (1)$\Leftrightarrow$(2): Since $\fp^s X=0$, one has that
\begin{align*}
\length_R X<\infty 
    & \iff \length_k(k\otimes_RX)<\infty    \\
    & \iff \length_l(l\otimes_k(k\otimes_RX))<\infty \\
    & \iff \length_l(l\otimes_RX))<\infty \,.
\end{align*}
This is the desired equivalence.

(1)$\Rightarrow$(3): When $\length_RX$ is finite, the $S$-module $S\otimes_RX$ is finitely generated, and hence so is the $S$-module $H^*(\kos{(S\otimes_RX)}{\fq})$. Since the latter is also annihilated by $\fq$, it follows that it has finite length.

(3)$\Rightarrow$(2): 
One has isomorphisms of $S$-modules
\[
H^0(\kos{(S\otimes_RX)}{\fq})
    \cong \frac{(S\otimes_RX)}{\fq(S\otimes_RX)}
        \cong l\otimes_S(S\otimes_RX)
            \cong l\otimes_R X\,.
\]
This justifies the claim.
    \end{proof}

We are now ready to provide the proof of the main result of this section.

\begin{proof}[Proof of Theorem~\ref{th:pi-main}]
Let $\cat T = \StMod kG$ and $\cat T_K = \StMod KG$. Consider the $k$-algebra $R=H^*(G,k)$ and let 
\[
R\longrightarrow S=H^*(G_K,K)\cong K\otimes_k H^*(G,k)
\]
be the map of rings induced by $k\to K$. This is a flat map. 

(1)$\Leftrightarrow$(2): 
Let $C$ be the direct sum of a representative set of simple $kG$-modules. We provide a chain of equivalent conditions to get from (1) to (2). 

Since the object $C$ is a compact generator of $\cat T$,  the description of the compacts in $\gam_\fp\cat T$ from Lemma~\ref{le:p-torsion}, yields that that condition (1) is equivalent to:
\begin{equation*}
\label{eq:length}
\length_{R_\fp}\Hom_{\cat T}^*(\kos C\fp,M) < \infty\,.
\end{equation*}
Lemma~\ref{le:koszul-ca} applies to $\Hom_{\cat T}^*(\kos C\fp, M)$ since it is annihilated by $\fp^s$ for some $s$, again by Lemma~\ref{le:p-torsion}. Thus the above condition is equivalent to 
\begin{equation*}
\label{eq:length2}
\length_{S_\fm} H^*(\kos{(S \otimes_R \Hom_{\cat T}^*(\kos C\fp,M))_\fm}{\fm}) < \infty\,.
\end{equation*} 
The isomorphism of $S$-modules
\[
 S\otimes_R \Hom_{\cat T}^*(\kos C{\fp}, M) 
     \cong K\otimes_k \Hom_{\cat T}^*(\kos C{\fp}, M)
     \cong \Hom^*_{\cat T_K}((\kos C\fp)_K, M_K)
\] 
implies that the last condition is further equivalent to 
\begin{equation*}
\label{eq:length3}
\length_{S_\fm} H^*(\kos{\Hom^*_{\cat T_K}((\kos C\fp)_K, M_K)}{\fm})_\fm < \infty\,.
\end{equation*} 
Now by Lemma~\ref{le:koszul-tt}, this is equivalent to
\begin{equation*}
\label{eq:length4}
\length_{S_\fm} \Hom^*_{\cat T_K}((\kos C\fp)_K \otimes_K \kos K{\fm} , (M_K))_\fm <\infty\,.
\end{equation*} 
In $\cat T_K$ one has an isomorphism
\[
(\kos C\fp)_K \otimes_K \kos K{\fm} \cong \kos {C_K}{(\fp S+\fm)}\,.
\]
Since $\fp S\subseteq \fm$ the $KG$-modules $\kos {C_K}{(\fp S+\fm)}$ and $\kos {C_K}{\fm}$ generate the same thick subcategory of $\cat T_K$, and  hence the last condition is equivalent to
\begin{equation*}
\label{eq:length5}
\length_{S_\fm} \Hom^*_{\cat T_K}(\kos {C_K}\fm, M_K)_\fm<\infty\,.
\end{equation*} 
Finally, since $C_K$ generates $\cat T_K$ and one has an isomorphism
\[
\Hom^*_{\cat T_K}(\kos {C_K}\fm, M_K)_\fm\cong \Hom^*_{\cat T_K}(\kos {C_K}\fm, \gam_\fm (M_K))
\]
another application of Lemma~\ref{le:p-torsion} yields that the previous condition is equivalent to $\gam_\fm (M_K)$ having finite length in  $\gam_\fm\cat T_K$.

(2)$\Leftrightarrow$(3): 
From Lemma~\ref{le:compact-generator} below it follows that the object $\gam_\fm(M_K)$ is of finite length in $\gam_\fm\StMod KG$ if and only if
\[
\length_{S_\fm} \Hom^*_{\cat T_K}(\alpha_*K, M_K)_\fm <\infty\,.
\]
It remains to note the adjunction isomorphism
\[
\Hom^*_{\cat T_K}(\alpha_*K, M_K)\cong \Hom^*_{\cat U_K}(K,\alpha^*(M_K))
\]
for $\cat U_K=\StMod K[t]/(t^p)$ and Lemma~\ref{le:cyclic}.
\end{proof}

The result below, implicit in \cite[Section~9]{Benson/Iyengar/Krause/Pevtsova:2018a}, was used in the argument above.

\begin{lemma}  
\label{le:compact-generator}
In the notation of Theorem~\ref{th:pi-main}, the thick subcategory generated by $\alpha_*K$ in  $\StMod KG$ is $\comp{(\gam_\fm\StMod KG)}$. 
\end{lemma}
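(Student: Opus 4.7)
The plan is to prove the equality $\thick(\alpha_*K)=\comp{(\gam_\fm\StMod KG)}$ by establishing both inclusions.

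For the inclusion $\thick(\alpha_*K)\subseteq\comp{(\gam_\fm\StMod KG)}$, I would first show that $\alpha_*K$ is a compact object of $\StMod KG$ lying in $\gam_\fm\StMod KG$. Flatness of $\alpha$ makes $KG$ free of rank $n=p^{-1}\dim_KKG$ over $K[t]/(t^p)$, so $\alpha_*K=KG\otimes_{K[t]/(t^p)}K$ is finite-dimensional, and thus compact in $\StMod KG$. To place it in $\gam_\fm\StMod KG$ it suffices to show $\supp(\alpha_*K)=\{\fm\}$ in $\Proj H^*(G,K)$. One direction uses the direct computation $\alpha^*\alpha_*K\cong K^n$ (again from flatness), which is stably non-zero, giving $\fm\in\supp(\alpha_*K)$; the other uses that for any $\pi$-point $\beta$ representing a prime $\fq\ne\fm$, the inequivalence of $\beta$ with $\alpha$ forces $\beta^*((\alpha_*K)_L)$ to be stably zero. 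Corollary~\ref{co:plocal-cat} then places $\alpha_*K$ in $\comp{(\gam_\fm\StMod KG)}$.

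For the reverse inclusion, the central tool is a projection formula in $\StMod KG$,
\[
M\otimes_K\alpha_*K\cong\alpha_*(\alpha^*M)\qquad\text{for } M\in\StMod KG,
\]
which one obtains at the level of stable categories by factoring the $\pi$-point $\alpha$ through the group algebra of a unipotent abelian subgroup scheme together with the usual Frobenius-type compatibility. For any compact $M\in\stmod KG$, the restriction $\alpha^*M$ is finite-dimensional over $K[t]/(t^p)$; a short induction on Loewy length using the triangles $0\to K\to K[t]/(t^n)\to K[t]/(t^{n-1})\to 0$ shows $\stmod K[t]/(t^p)=\thick(K)$, so $\alpha^*M\in\thick(K)$. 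Applying the exact functor $\alpha_*$ gives $M\otimes_K\alpha_*K\in\thick(\alpha_*K)$. Since $-\otimes_K M$ is an exact endofunctor preserving thick subcategories, this shows $\thick(\alpha_*K)$ is closed under tensoring with any compact of $\StMod KG$; that is, it is a thick tensor ideal of $\stmod KG$.

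At this point I would invoke the classification of thick tensor ideals of $\stmod KG$ for finite group schemes (Friedlander--Pevtsova, generalising Benson--Carlson--Rickard): such ideals correspond bijectively via the support to specialisation-closed subsets of $\Proj H^*(G,K)$. Since $\fm$ is closed and $\supp(\alpha_*K)=\{\fm\}$, this yields $\thick(\alpha_*K)=\{N\in\stmod KG\mid\supp N\subseteq\{\fm\}\}$. To finish, I would identify this with $\comp{(\gam_\fm\StMod KG)}$ via Corollary~\ref{co:plocal-cat}: the latter consists of compact objects of $\StMod KG$ lying in $\gam_\fm\StMod KG$, and a finite-dimensional $KG$-module is $V(\fm)$-torsion iff its support is contained in $\{\fm\}$, while the $Z(\fm)$-local condition is automatic since $Z(\fm)$-torsion objects have supports disjoint from $\{\fm\}$, forcing the relevant Hom groups to vanish by support theory.

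The principal obstacle is twofold: establishing the projection formula for $\pi$-points (which requires unwinding the factorisation of a $\pi$-point through a group subalgebra and exploiting the Hopf-algebra compatibility up to stable isomorphism) and invoking the tt-classification for finite group schemes. An alternative route avoiding the full classification would be to prove directly, via the projection formula applied to suitable localisations, that every Koszul generator $\kos{C_\fm}{\fm}$ of $\comp{(\gam_\fm\StMod KG)}$ from Lemma~\ref{le:p-torsion} belongs to $\thick(\alpha_*K)$, reducing in each case to a concrete computation inside $\stmod K[t]/(t^p)$.
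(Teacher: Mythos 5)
Your approach differs fundamentally from the paper's, and it contains a genuine gap at its central step.

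The paper's proof is short and direct: it observes that $\alpha_*K$ is finite-dimensional (hence compact), has support $\{\fm\}$ (hence lies in $\gam_\fm\StMod KG$), and then uses the detection property of $\pi$-points --- namely that $\fm\in\supp(N)\Leftrightarrow\alpha^*N\neq 0$ --- together with the adjunction isomorphism $\Hom_{\cat T_K}(\alpha_*K,N)\cong\Hom_{\cat U_K}(K,\alpha^*N)$ to conclude that $\alpha_*K$ generates the localising subcategory $\gam_\fm\StMod KG$. Since $\alpha_*K$ is compact, Neeman's theorem then identifies $\thick(\alpha_*K)$ with the subcategory of compacts. No tensor structure on the subcategory, and certainly no classification theorem, is invoked.

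Your route, by contrast, hinges on the projection formula
\[
M\otimes_K\alpha_*K\cong\alpha_*(\alpha^*M)
\]
in $\StMod KG$, which you use to show $\thick(\alpha_*K)$ is a thick tensor ideal, and then you invoke the full classification of thick tensor ideals of $\stmod KG$. The gap is in the projection formula. The standard map $KG\otimes_A M\to M\otimes_K(KG\otimes_A K)$, $g\otimes m\mapsto\sum g_{(1)}m\otimes(g_{(2)}\otimes 1)$, is well-defined over $A$ precisely when $\alpha\colon A\to KG$ is compatible with the coalgebra structures (one needs $\Delta\circ\alpha=(\alpha\otimes\alpha)\circ\Delta_A$ when checking descent across $g\alpha(a)\otimes m = g\otimes am$). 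But a $\pi$-point is only required to be a flat $K$-\emph{algebra} map factoring through a unipotent abelian subgroup scheme; it is not a map of Hopf algebras. Your appeal to ``the usual Frobenius-type compatibility'' after factoring $\alpha$ through $KC$ does not repair this: the inclusion $KC\hookrightarrow KG$ is a Hopf map, so projection holds there, but the remaining map $K[t]/(t^p)\to KC$ is again only an algebra map, and the comultiplication on $KC$ induced from $KG$ need not make a generic flat such map into a coalgebra map. (In the elementary abelian case one can repair this by switching to the Frobenius-kernel Hopf structure under which linear $\pi$-points become primitive, and then transfer results across the fact that tensor ideals and support are independent of the chosen Hopf structure --- but that additional bookkeeping is nowhere in your sketch, and it does not obviously carry over to arbitrary unipotent abelian group schemes.) Your fallback idea of instead showing directly that the Koszul generators $\kos{C_\fm}{\fm}$ lie in $\thick(\alpha_*K)$ relies on the same projection formula, so it does not avoid the problem.

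Even setting the projection formula aside, note that you are invoking the Benson--Carlson--Rickard / Friedlander--Pevtsova classification of thick tensor ideals, which is a much heavier result than the lemma requires. Since this lemma sits inside the machinery used to \emph{prove} Theorem~\ref{th:pi-main}, relying on the full tt-classification (whose proof for finite group schemes itself makes essential use of $\pi$-points) risks circularity in this context and is in any case disproportionate. You should replace the tensor-ideal argument with the generation argument the paper gives: verify $\supp(\alpha_*K)=\{\fm\}$, note that for nonzero $N\in\gam_\fm\StMod KG$ the detection property gives $\alpha^*N\neq 0$ and hence $\Hom(\alpha_*K,N)\neq 0$, and conclude that $\alpha_*K$ is a compact generator.
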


\begin{proof}
Set $\cat T_K=\StMod KG$ and $\cat U_K = \StMod K[t]/t^p$. The $KG$-module $\alpha_*K$ is finite dimensional and hence compact in $\cat T_K$. It is easy to verify that $\supp(\alpha_*K)=\{\fm\}$; see, for example, the first part of the proof of \cite[Lemma~9.1]{Benson/Iyengar/Krause/Pevtsova:2018a}. Thus $\alpha_*K$ is in $\gam_\fm\cat T_K$. For any $N\ne 0$ in $\gam_\fm\cat T$ one has $\alpha^*N\ne 0$ in $\StMod K[t]/(t^p)$, and hence
\[
\Hom_{\cat T_K}(\alpha_*K, N)\cong \Hom_{\cat U_K}(K, \alpha^*N)\ne 0\,.
\]
Given these facts, a standard argument yields that the localising subcategory generated by $\alpha_*K$ equals $\gam_\fm\cat T_K$. Since $\alpha_*K$ is compact, the desired result follows.
\end{proof}

\begin{ack}
Our work on this project started many years ago and our perspective on it has evolved considerably over this time. We have benefited from conversations with numerous colleagues.  It is a pleasure to thank in particular Greg Stevenson for sharing his insights. 

Part of this work was done during the Trimester Program `Spectral Methods in Algebra, Geometry, and Topology' in 2022 at the Hausdorff Institute in Bonn. It is a pleasure to thank HIM for hospitality and for funding of all four authors by the Deutsche Forschungsgemeinschaft under Excellence Strategy EXC-2047/1-390685813.  We thank the American Institute for Mathematics for its hospitality and the support through the SQuaRE program. Some of the material is based on work supported by the National Science Foundation under Grant No. DMS-1928930 and by the Alfred P. Sloan Foundation under grant G-2021-16778, while three of the authors were in residence at the Simons Laufer Mathematical Sciences Institute (formerly MSRI) in Berkeley, California, during the Spring 2024 semester. Another three of us would like to thank the Oberwolfach Research Institute for Mathematics for their hospitality in September of 2023.

SBI was partly supported by NSF grant DMS-2001368, HK was partly supported by the Deutsche Forschungsgemeinschaft (SFB-TRR 358/1 2023 - 491392403), and JP was partly supported by NSF grants DMS-1901854, 2200832 and the  Brian and Tiffinie Pang faculty fellowship.
\end{ack}

\bibliographystyle{amsplain}
\bibliography{repcoh}

\end{document}